\documentclass[a4paper,10pt]{amsart}

\usepackage{amssymb,amsmath,amsthm,mathrsfs,enumerate,graphicx, color}
\usepackage[pdfpagelabels,colorlinks,linkcolor=blue,citecolor=black,urlcolor=blue]{hyperref}
\usepackage{esint}
\newtheorem{thm}{Theorem}[section]

\newtheorem{cor}[thm]{Corollary}
\newtheorem{lem}[thm]{Lemma}
\newtheorem{prop}[thm]{Proposition}
\newtheorem{defn}[thm]{Definition}
\newtheorem{rem}[thm]{Remark}

\newcommand{\Mu}{\mathbb{M}_1(\mu)}

\newcommand{\lesi}{\lesssim}

\newcommand{\pd}{p(\cdot)}
\newcommand{\px}{p(x)}

\newcommand{\f}{\frac}

\newcommand{\om}{\omega}
\newcommand{\Om}{\Omega}

\newcommand{\vc}{\infty}
\newcommand{\Rn}{\mathbb{R}^n}

\newcommand{\di}{{\rm div}}

\textwidth =160mm \textheight =240mm
\oddsidemargin 0mm
\evensidemargin 0mm
\setlength{\topmargin}{0cm}

\title[Regularity estimates for elliptic equations with non-standard
growth]{Weighted variable exponent Sobolev estimates for elliptic equations with non-standard
	growth and measure data}         
\author{The Anh Bui}
\address{Department of Mathematics, Macquarie University, NSW 2109,
Australia}
\email{the.bui@mq.edu.au, bt\_anh80@yahoo.com}
 
\author{Xuan Thinh Duong}
\address{Department of Mathematics, Macquarie University, NSW 2109,
Australia}

\email{xuan.duong@mq.edu.au}

\keywords{nonlinear $p(x)$-Laplacian type equation, measure data, Reifenberg domain, weighted generalized Lebesgue spaces}
\subjclass[2010]{35B65, 35J60, 35J99}

\begin{document}

\begin{abstract}
Consider the following nonlinear elliptic equation of $p(x)$-Laplacian type with nonstandard growth 
\begin{equation*}
\left\{
\begin{aligned}
&\di  a(Du, x)=\mu \quad &\text{in}& \quad \Om,\\
&u=0  \quad &\text{on}& \quad \partial\Om,
\end{aligned}
\right.
\end{equation*}
where $\Om$ is a Reifenberg domain in $\Rn$, $\mu$ is a Radon measure defined on $\Om$ with finite total mass and the nonlinearity $a: \mathbb{R}^n\times \mathbb{R}^n\to \mathbb{R}^n$ is modeled upon the $p(\cdot)$-Laplacian.

We prove the estimates on weighted {\it variable exponent} Lebesgue spaces for gradients of  solutions to this equation 
in terms of Muckenhoupt--Wheeden type estimates. As a consequence, we obtain some  
new results such as the weighted $L^q-L^r$ regularity (with constants $q  < r$) and estimates on Morrey spaces for gradients of the solutions to this non-linear equation.
\end{abstract}
\date{}

\maketitle

\tableofcontents

\section{Introduction}
Partial differential equations including nonlinear elliptic and parabolic problems with nonstandard growth conditions 
have recently been studied extensively by many mathematicians as these equations have had a wide 
range of applications in many fields such as mathematical physics,  elastic mechanics, image processing and electro-rheological fluid dynamics. See for example \cite{AM1, AM2, BOR, DR,CLR,Hetal, RR, R, Ru, ZZ, Z} and the references therein.
 
In this paper we consider the following nonlinear elliptic equation of $p(x)$-Laplacian type with nonstandard growth 

\begin{equation}\label{Maineq}
\left\{
\begin{aligned}
&\di  a(Du, x)=\mu \quad &\text{in}& \quad \Om,\\
&u=0  \quad &\text{on}& \quad \partial\Om,
\end{aligned}
\right.
\end{equation}
where $\Om$ is a bounded open domain in $\Rn$ and $\mu$ is a Radon measure defined on $\Om$ with finite total mass. The nonlinearity $a: \mathbb{R}^n\times \mathbb{R}^n\to \mathbb{R}^n$ is modeled upon the $p(\cdot)$-Laplacian.

Recently a systematic study on nonlinear elliptic of $p(x)$-Laplacian of type \eqref{Maineq} with measure data has been received a lot of attention. In the particular case of $p$-Laplacian type equations (i.e. $p$ is independent of $x$), the existence results for the solutions to nonlinear elliptic and parabolic equations with measure data were proved in \cite{BG,BG2,Be.etal,BGO}. Then the regularity results for solutions of those equations were obtained in \cite{Ph} for the elliptic case and in \cite{QHN} for the parabolic case.  For the general case of  $p(x)$-Laplacian type equation, some interesting results regarding to entropy solutions and very weak solutions were obtained in \cite{BWZ,AHHL,SU,ZY}.\\

Recall that a weak solution to the problem \eqref{Maineq} is a function $u\in W^{1,\pd}_0(\Om)$ such that
$$
\int_{\Om }a(Du, x)\cdot D\varphi dx = \int_\Om \varphi d\mu \ \ \ \ \text{for all $\varphi\in C_0^\vc(\Om)$}.
$$
See Section 2 for definition of the variable exponent Sobolev space $W^{1,\pd}_0(\Om)$.\\

For $x\in \mathbb{R}^n$, we define
$$
\Mu(x)=\sup_{r>0}\sup_{B_r(y)\ni x}\f{|\mu|(B_r(y))}{r^{n-1}}\sim \sup_{r>0}\f{|\mu|(B_r(x))}{r^{n-1}}
$$
to be the first order fractional maximal function associated to the measure $\mu$, 
where $B_r(z):=\{y: |z-y|<r\}$ is the open ball with center $z\in \mathbb{R}^n$ and radius $r$. 
It is not difficult to see that for a nonnegative locally finite measure $\nu$ in $\mathbb{R}^n$, 
the maximal function $\mathbb{M}_1(\nu)$ is dominated by the Riesz potential related to $\nu$. 
More precisely, we have
$$
\mathbb{M}_1(\nu)(x)\leq c_n \mathbf{I}_1(\nu)(x):=c_n\int_{\mathbb{R}^n}\f{d\nu(y)}{|x-y|^{n-1}}, x\in \mathbb{R}^n.
$$

We note that the problem of getting estimates for the solution via fractional maximal functions and  nonlinear potentials is an interesting topic and has attracted a great deal of attention in recent years. We now list some of the papers related to this direction.\\
\begin{enumerate}[{\rm (i)}]
	\item The first two results appeared in \cite{KM1, KM2}, where the authors proved a pointwise potential estimate for solutions to the quasi elliptic equation via Wolff potentials. Later, in \cite{TW}, by using a different approach, the authors extended this result to obtain the pointwise estimates for solutions
	to non-homogeneous quasi-linear equations of $p$-Laplacian type with measure data in terms of Wolff type nonlinear potentials.\\

\item In the series of works by Mingione and his collaborators, they extended the results in \cite{TW} to the pointwise estimates for the gradient of solutions, instead of the solution, via nonlinear potentials. More precisely, the pointwise estimate for the gradient of solutions to the degenerate quasilinear equations of $p$-Laplacian type was first proved in \cite{M1} for the case $p=2$. The case $p\neq 2$ can be found in \cite{DM1, DM2, DM3, KMi1, KMi2}.\\

\item The gradient estimates for solutions to the equation \eqref{Maineq} in terms of variable exponent potentials were obtained in \cite{BH, BaH} corresponding to $p(\cdot)\geq 2$ and $p(\cdot)>2-1/n$ by using Mingione and Duzzar's approach. Then, optimal integrability results for solutions of the $\px$-Laplace equation in variable exponent weak Lebesgue spaces were obtained in \cite{AHHL}.\\

\item The regularity results for the solutions to the nonlinear elliptic equation of $p(x)$-Laplacian type of the form
\[
\left\{
\begin{aligned}
&\di a(Du, x)=\di(|F|^{\pd -2}F) \quad &\text{in}& \quad \Om,\\
&u=0  \quad &\text{on}& \quad \partial\Om,
\end{aligned}
\right.
\]
 were proved in \cite{BOR, BO} in the scale of Lebesgue and generalized Lebesgue spaces, respectively.\\
 
 \item In \cite{Ph}, the author proved weighted estimates for gradients of solution to the equation \eqref{Maineq} in the particular case of $p$-Laplacian type via the maximal operator $\mathbb{M}_1$.\\

\end{enumerate}

The main aim of this paper is to prove the weighted $L^{q(\cdot)}$ estimates for  gradients of the solutions to the equation \eqref{Maineq} via the fractional maximal function $\mathbb{M}_1$. These estimates are similar to those in \cite{Ph} as in (v) above but we obtain the estimates for  solutions of nonlinear elliptic equations of $p(x)$-Laplacian type and in terms of the weighted $L^{q(\cdot)}$. See Theorem \ref{mainthm1} and its subsequence results in Corollaries \ref{mainthm2}, \ref{mainthm3} and \ref{mainthm4}.\\

We now give some comments on the technical ingredients used in this paper. 
In order to prove the main results, we employ the maximal function technique  which makes use of the variant of Vitali covering lemma 
and good $\lambda$-inequality. This technique was originated in \cite{CP} and was used in various settings. 
See for example \cite{BW, Ph,MP2,QHN, BDL}. However, some major modifications need to be carried out since 
the maximal function techniques are not applicable directly to our problem due to the presence of 
the variable exponent $\px$ which rules out the homogeneity of the equation \eqref{Maineq}. To overcome this problem, we  make use of the log-H\"older condition of the exponent functions and  some subtle localized estimates. \\

 The organization of the paper is as follows. In Setion 2, we set up the assumptions on the nonlinearity $a$ and the underlying domain $\Om$, and then state the main results. See Theorem \ref{mainthm1} and its subsequence results such as  Corollaries \ref{mainthm2}, \ref{mainthm3}, \ref{mainthm4} and Theorem \ref{mainthm5}. In Section 3, we prove some interior and boundary comparison estimates which play an important role in the sequel. The proofs of the main results are given in Section 4.\\

 Throughout the paper, we always use $C$ and $c$ to denote positive constants that are independent of the main parameters involved but whose values may differ from line to line. We write
 $A\lesi B$ if there is a universal constant $C$ so that $A\leq CB$ and $A\sim B$ if $A\lesi B$ and $B\lesi A$. For $a, b\in \mathbb{R}$ we denote $a\wedge b =\min\{a,b\}$. We also denote  by  $\mathcal{O}(\texttt{data})$ the infinitely small quantity with respect to the \texttt{data}, i.e., $\lim_{\texttt{data}\to 0}\mathcal{O}(\texttt{data})=0$.

\section{Assumptions  and Statement of the results}

We will begin with some notations which will be used frequently in the sequel.
\begin{itemize}
	
	\item For $x\in \mathbb{R}^n$ and $r>0$, we denote by $B_r(x):=\{y\in \mathbb{R}^n: |x-y|<r\}$ the open ball with center $x$ 
	and radius $r$ in $\mathbb{R}^{n}$.
	
	\item We also denote $\Om_r(x)=\Om\cap B_r(x)$ and $\partial_w \Om_r(x)=\partial \Om\cap B_r(x)$. If $x$ is the origin, we simply write $B_r$, $\Om_r$ and $\partial_w\Om_r$ for $B_r(x)$, $\Om_r(x)$ and $\partial_w\Om_r(x)$, respectively.
	
	\item For a measurable function $f$ on a measurable subset $E\subset \mathbb{R}^{n}$ we define
	$$
	\overline{f}_E =\fint_E f dx=\f{1}{|E|}\int_E f dx.
	$$
\end{itemize}

We now recall  some definitions and basic properties concerning the variable exponent Lebesgue spaces in \cite{CF}. 
Let $\Omega$ be a subset of $\mathbb{R}^n$. For $p(\cdot) : \Om \to (0,\vc)$, we define the variable exponent  Lebesgue spaces $L^{p(\cdot)}(\Om)$ to be a generalization of the classical Lebesgue
spaces consisting of all measurable functions on $\Om$ satisfying
$$
\int_\Om |f(x)|^{p(x)} dx<\vc,
$$
with the norm
$$
\|f\|_{L^{p(\cdot)}(\Om)}=\inf\Big\{\lambda>0: \int_{\Omega}\left(\f{|f(x)|}{\lambda}\right)^{p(x)}dx\leq 1\Big\}.
$$
It is well-known that
$$
\|f\|_{L^{p(\cdot)}(\Om)}\leq 1 \Longleftrightarrow \int_\Om |f(x)|^{p(x)} dx\leq 1,
$$
and  if $1\leq p(x)<\vc$ for all $x\in \Om$ then $\|\cdot\|_{L^{p(\cdot)}(\Om)}$ is a norm and hence $L^{p(\cdot)}(\Om)$ is a Banach space. In general, $\|\cdot\|_{L^{p(\cdot)}(\Om)}$  is a quasi-norm.

The generalized Sobolev space $W^{1,\pd}(\Om)$ is defined as the set of all measurable functions $f\in L^{\pd}(\Om)$ whose derivative $Df\in L^{\pd}(\Om)$. If $f\in W^{1,\pd}(\Om)$, then its norm is defined by
$$
\|f\|_{W^{1,\pd}(\Om)}=\|f\|_{L^{\pd}(\Om)}+\|\,|Df|\,\|_{L^{\pd}(\Om)}.
$$ 
The space $W^{1,\pd}_0(\Om)$ is defined as a closure of $C^\vc_c(\Om)$ in $W^{1,\pd}(\Om)$. The generalized Lebesgue--Sobolev spaces play an important role in studying regularity estimates for elliptic and parabolic problems. See for example \cite{DR, Dea, CF} and the references therein for further discussions.

\subsection{Our assumptions}

In this paper, we assume that the nonlinearity $a(\xi,x): \mathbb{R}^n\times \mathbb{R}^n \to \mathbb{R}^n$  is  measurable in $x$ for every $\xi\in \Rn$ and differentiable in $\xi$ for each $x\in \Rn$. In addition,  there exist the variable exponent $\pd: \Om\to (1,\vc)$ and  constants $\Lambda_1\geq  \Lambda_2>0$, $s\in [0,1]$  so that
\begin{equation}\label{eq1-functiona}
(s^2+|\xi|^2)^{1/2}|D_\xi a(\xi,x)|+|a(\xi,x)|\leq \Lambda_1(s^2+|\xi|^2)^{\f{p(x)-1}{2}},
\end{equation}
and
\begin{equation}\label{eq1s-functiona}
\langle D_\xi a(\xi,x)\eta,\eta\rangle  \geq \Lambda_2 (s^2+|\xi|^2)^{\f{p(x)-2}{2}}|\eta|^2,
\end{equation}
for every $x, \xi, \eta \in \mathbb{R}^n$.

Note that these two conditions imply that
\begin{equation}\label{eq2-functiona}
 \langle a(\xi,x)-a(\eta,x),\xi-\eta\rangle  \geq \Lambda_2 (s^2+|\xi|^2+|\eta|^2)^{\f{p(x)-2}{2}}|\xi-\eta|^2\geq \Lambda_2|\xi-\eta|^{\px}
\end{equation}
for every $x,\xi,\eta \in \mathbb{R}^n$.

Moreover, the exponent function $\pd: \Om\to (1,\vc)$ is assumed to be continuous, satisfies the  bounds
$$
2-\f{1}{n}< \gamma_1\leq p(x)\leq \gamma_2<\vc,
$$
and the log-H\"older continuity condition
\begin{equation}
\label{cond1-px}|p(x)-p(y)|\leq \omega(|x-y|), \ \forall x,y\in \Rn
\end{equation}
where $\omega:[0,\vc)\to [0,\vc)$ is a non-decreasing function satisfying
\begin{equation}
\label{cond2-px} \lim_{r\to 0^+}\omega(r)\log\Big(\f{1}{r}\Big)=0.
\end{equation}

We choose a number $R_\om$ so that for all $0<r<R_\omega$,
\begin{equation}
\label{eq-R1}
0<\om(r)\log\Big(\f{1}{r}\Big)\leq \f{1}{2}.
\end{equation}   

We set
$$
\Theta(a,B_r(y))(x)=\sup_{\xi\in \Rn}\left|\f{a(\xi,x)}{(s^2+|\xi|^2)^{\f{\px-1}{2}}}-\overline{\left(\f{a(\xi,\cdot)}{(s^2+|\xi|^2)^{\f{\pd-1}{2}}}\right)}_{B_r(y)}\right|
$$
which is used in the next definition concerning the nonlinearity $a$.
\begin{defn}
	Let $R_0,\delta>0$. The nonlinearity $a$ is said to satisfy a small $(\delta, R_0)$-BMO condition if 
	\begin{equation}\label{eq2-Assumption}
	[{\bf a}]_{2,R_0}:=\sup_{y\in \mathbb{R}^n}\sup_{0<r\leq R_0}\,\fint_{B_{r}(y)}|\Theta(a,B_r(y))(x)|^2dx\leq \delta^2.
	\end{equation}
\end{defn}
\begin{rem}
	\label{rem1}
	This condition was introduced in \cite{BOR}. Note that if \eqref{eq2-Assumption} holds true, then for any $\beta\in [1,\vc)$ we have
	$$
	[{\bf a}]_{\beta,R_0}:=\sup_{y\in \mathbb{R}^n}\sup_{0<r\leq R_0}\,\fint_{B_{r}(y)}|\Theta(a,B_r(y))(x)|^\beta dx\leq \mathcal{O}(\delta).
	$$
\end{rem}
\medskip

\subsection{Reifenberg flat domains}
Concerning  the underlying domain $\Omega$, we do not assume any smoothness condition on $\Omega$, but the following  flatness condition.
\begin{defn}\label{defn2}
	Let $\delta, R_0>0$. The domain $\Om$ is said to be a $(\delta, R_0)$ Reifenberg flat domain if for every $x\in \partial\Om$ and $0<r\leq R_0$, then there exists a coordinate system depending on $x$ and $r$, whose variables are denoted by $y=(y_1,\dots,y_n)$ such that in this new coordinate system $x$ is the origin and
	\begin{equation}\label{eq1-Assumption}
	B_{r}\cap\{y: y_n>\delta r\}\subset B_{r}\cap \Omega \subset \{y: y_n>-\delta r\}.
	\end{equation}
\end{defn}

\begin{rem}\label{rem1}
	{\rm (a)} The condition of  $(\delta, R_0)$-Reifenberg flatness condition was first introduced in \cite{R}. This condition does not require any smoothness on the boundary of $\Om$, but
	sufficiently flat in the Reifenberg's sense. The Reifenberg flat domain includes domains with rough boundaries of fractal
	nature, and Lipschitz domains with small Lipschitz constants.
	For further discussions about the Reifenberg domain, we refer to \cite{R, DT, Toro} and the references therein. \\

	{\rm (b)} If $\Om$ is a $(\delta, R_0)$ Reifenberg domain, then for any $x_0\in \partial \Om$ and $0<\rho<R_0(1-\delta)$ there exists a coordinate system, whose variables are denoted by $y=(y_1,\ldots, y_n)$ such that in this coordinate system the origin is an interior point of $\Om$, $x_0=(0,\ldots, 0, -\f{\delta \rho}{1-\delta})$ and
	$$
	B_{\rho}^+\subset B_{\rho}\cap \Om\subset B_{\rho}\cap  \left\{y: y_n>-\f{2\delta \rho}{1-\delta}\right\}.
	$$

	{\rm (c)} For $x\in \Om$ and $0<r<R_0$, we have
	\begin{equation}
	\label{eq1-Reifenberg domain}
	\f{|B_r(x)|}{|B_r(x)\cap \Om|}\leq \Big(\f{2}{1-\delta}\Big)^n.
	\end{equation}
\end{rem}

\bigskip

{\it Throughout the paper, we always assume that the domain $\Om$ is a $(\delta, R_0)$ Reifenberg flat domain, and the nonlinearity $a$ satisfies \eqref{eq1-functiona}, \eqref{eq1s-functiona} and the small $(\delta, R_0)$-BMO condition \eqref{eq2-Assumption}.}
\subsection{Statement of the results}
Let $1\leq p<\infty$. A nonnegative locally integrable function $w$ belongs to the {\sl Muckenhoupt class} $A_p$, say $w\in A_p$, if there exists a positive constant $C$ so that
$$[w]_{A_p}:=\Big(\fint_B w(x)dx\Big)\Big(\fint_Q w^{-1/(p-1)}(x)dx\Big)^{p-1}\leq C, \quad\mbox{if}\; 1<p<\infty,$$
and
$$
\fint_B w(x)dx\leq C \mathop{\mbox{ess-inf}}\limits_{x\in B}w(x),\quad{\rm if}\; p=1,
$$
for all balls $B$ in $\mathbb R^n$. We say that $w\in A_\infty$ if $w\in A_p$ for some $p\in [1,\infty)$. We shall denote $w(E) :=\int_E w(x)dx$ for any measurable set $E \subset \mathbb{R}^n$.

For a weight $w$ and $0<q<\vc$ we define
$$
L^q_w(\Om)=\Big\{f: \|f\|_{L^q_w(\Om)}:=\Big(\int_{\Om }|f(x)|^q w(x)dx\Big)^{1/q}<\vc\Big\}.
$$

We now record the following property of the Muckenhoupt weights in \cite{Du}.
\begin{lem}\label{weightedlemma2}
	Let  $w\in A_\vc$. Then, there exist $\kappa_w\in (0,1)$, and a constant $c_w>1$ such that for any ball $B$ and any measurable subset $E\subset B$,
	$$w(E) \leq c_w \Big(\f{|E|}{|B|}\Big)^{\kappa_w}w(B).
	$$
\end{lem}

We now consider the continuous exponent function $q(\cdot): \Om\to (0,\vc)$ satisfying the log-H\"older continuity condition:
\begin{equation}
\label{cond1-qx}|q(x)-q(y)|\leq \nu(|x-y|), \ \forall x,y\in \Om,
\end{equation}
where $\nu:[0,\vc)\to [0,\vc)$ is a non-decreasing function satisfying
\begin{equation}
\label{cond2-qx} \lim_{r\to 0^+}\nu(r)\log\Big(\f{1}{r}\Big)=0.
\end{equation}
We also assume that there exist constants $\gamma_3$ and $\gamma_4$ such that
\begin{equation}
\label{cond3-qx}
0<\gamma_3\leq q(x)\leq \gamma_4<\vc, \ \ \ \forall x\in \Om.
\end{equation}

Our first main result gives the weighted $L^{q(\cdot)}$ regularity for the solutions to problem \eqref{Maineq}.
\begin{thm}\label{mainthm1}
	Let $q(\cdot)$ be defined as in \eqref{cond1-qx}, \eqref{cond2-qx} and \eqref{cond3-qx}, $w\in A_{\vc}$ and $0<\sigma_0<\min\left\{\f{n(\gamma_1-1)}{n-1},n\right\}$. Then there
	exists  a positive constant $\delta=\delta(n,\Lambda_1, \Lambda_2, \pd,q(\cdot), w)$ such that the following holds. If the domain $\Omega$ is a $(\delta, R_0)$ Reifenberg flat domain with $ R_0>0$, and the nonlinearity $a$  satisfies \eqref{eq1-functiona}, \eqref{eq1s-functiona} and the small BMO condition \eqref{eq2-Assumption}, then for any weak solution $u\in W^{1,\pd}(\Om)$ to the problem \eqref{Maineq}, the following estimate holds true
	\begin{equation}\label{eq-mainthm}
	\int_{\Om} |Du|^{q(x)}w(x)dx \leq C\left[(|\mu|(\Om)^{\f{\sigma_0}{\gamma_1-1}}+|\Om|)^{n+1} + \int_{\Om} |\Mu|^{\f{q(x)}{p(x)-1}}w(x)dx\right]
	\end{equation}
	where $C$ is a constant depending on $n,\Lambda_1, \Lambda_2, \pd, q(\cdot), R_0,w, \sigma_0$.
	
	Or equivalently, we have
	\begin{equation}\label{eqs-mainthm}
	\int_{\Om} |Du|^{(p(x)-1)q(x)}w(x)dx \lesi \left[(|\mu|(\Om)^{\f{\sigma_0}{\gamma_1-1}}+|\Om|)^{n+1} + \int_{\Om} |\Mu|^{q(x)}w(x)dx\right].
	\end{equation}
	
\end{thm}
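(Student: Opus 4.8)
The plan is to establish the weighted variable-exponent estimate \eqref{eq-mainthm} via the by-now-standard machinery of Vitali-type covering plus a good-$\lambda$ inequality, but adapted carefully to the non-homogeneous $p(x)$ setting. First I would normalize the problem: by scaling $u$, $a$ and $\mu$ one reduces to the situation where the relevant global quantities (the total mass $|\mu|(\Om)$ and $|\Om|$) are of size one, so that one only has to track the local behaviour of $\Mu$ against level sets of $|Du|$. The crucial point of the normalization is that it must be compatible with the variable exponent: because \eqref{Maineq} is not homogeneous in $u$, rescaling $u\mapsto u/\lambda$ changes the equation into one with nonlinearity $a_\lambda(\xi,x)=\lambda^{-(p(x)-1)}a(\lambda\xi,x)$, and one needs that the structure conditions \eqref{eq1-functiona}, \eqref{eq1s-functiona} and the BMO bound \eqref{eq2-Assumption} survive (up to harmless constants) uniformly in the relevant range of $\lambda$. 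This is exactly where the log-H\"older continuity \eqref{cond1-px}--\eqref{eq-R1} enters: it guarantees $\lambda^{\om(r)}\sim 1$ on balls of radius $r<R_\om$, so the oscillation of $p(\cdot)$ on a small ball contributes only an $\mathcal{O}$-factor.

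Next I would set up the comparison estimates, which by the paper's own roadmap are proved in Section~3. For a ball $B_{2r}(y)$ (interior case) or $\Om_{2r}(y)$ with $y\in\partial\Om$ (boundary case), one freezes the coefficients and the exponent, solving first the Dirichlet problem with the measure $\mu$ replaced by zero on that ball, then the limiting frozen-coefficient homogeneous problem; the two comparison steps produce
\[
\fint_{B_r}|Du-Dv|^{p(x)}\,dx \lesi \mathcal{O}(\delta)\,\fint_{B_{2r}}(s^2+|Du|^2)^{p(x)/2}\,dx + \Big(\f{|\mu|(B_{2r})}{r^{n-1}}\Big)^{\f{p(x)}{p(x)-1}},
\]
together with a Lipschitz (or at least higher-integrability) bound on $Dv$, uniformly on such balls. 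Summing the comparison estimates and the interior $C^{1,\alpha}$-type regularity for the homogeneous frozen problem yields a density/decay statement: on any ball where $\Mu$ is controlled by a small multiple of $\lambda$ and the average of $|Du|^{p(x)}$ is below $\lambda$, the portion of the ball where $|Du|^{p(x)}$ exceeds a large multiple of $\lambda$ is a small fraction of the ball. Converting this via the $A_\infty$-property of $w$ (Lemma~\ref{weightedlemma2}, which turns the smallness of $|E|/|B|$ into smallness of $w(E)/w(B)$ with exponent $\kappa_w$) gives the weighted good-$\lambda$ inequality:
\[
w\big(\{|Du|^{p(\cdot)}>N\lambda\}\big) \le \varepsilon\, w\big(\{|Du|^{p(\cdot)}>\lambda\}\big) + w\big(\{\mathbb{M}\big(|Du|^{p(\cdot)}\big)>\lambda\} \cap \{(\Mu)^{\f{p(\cdot)}{p(\cdot)-1}}>c\lambda\}\big),
\]
where $\mathbb{M}$ is a suitable (restricted) maximal operator; here $N$ is fixed large and $\varepsilon=\varepsilon(\delta)\to 0$ as $\delta\to0$.

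From the good-$\lambda$ inequality the weighted estimate \eqref{eq-mainthm} follows by the usual iteration: multiply by $N^{kq_0}$ (with $q_0$ replaced, level by level, by the frozen exponent values so as to respect $q(\cdot)$) and sum over $k$, choosing $\delta$ small enough that $\varepsilon N^{q_0}<1$ for the whole admissible range $\gamma_3\le q(x)\le\gamma_4$; the maximal operator $\mathbb{M}$ acting on $L^{q(\cdot)/p(\cdot)}_w$-type quantities is bounded (since $w\in A_\infty$ and $q(\cdot)/p(\cdot)$ is again log-H\"older with a positive lower bound), so one can absorb it, leaving precisely $\int_\Om|\Mu|^{q(x)/(p(x)-1)}w\,dx$ plus the geometric term coming from the normalization, which produces the $(|\mu|(\Om)^{\sigma_0/(\gamma_1-1)}+|\Om|)^{n+1}$ summand (the exponent $n+1$ and the role of $\sigma_0<\min\{n(\gamma_1-1)/(n-1),n\}$ being what guarantees the $\Mu$ contribution is a-priori finite on the level of very weak solutions, i.e. that $|Du|^{p(\cdot)-1}$ lies in a Marcinkiewicz space). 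The equivalent form \eqref{eqs-mainthm} is just the substitution $q(x)\mapsto (p(x)-1)q(x)$, which is legitimate because that product inherits log-H\"older continuity and the two-sided bounds \eqref{cond3-qx} from $q(\cdot)$ and $p(\cdot)$.

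The main obstacle, and where most of the work lies, is the interplay between the variable exponent and every estimate that in the constant-$p$ theory relies on scale invariance. Concretely: (i) the good-$\lambda$ iteration must be run with exponents that vary from ball to ball, so one cannot simply raise a fixed power and sum — instead one partitions dyadically in $\lambda$ and on each annulus replaces $q(x)$ and $p(x)$ by their (nearly constant) frozen values on the relevant small ball, paying an $\mathcal{O}(\delta)$ or $\lambda^{\om(r)}$ price that the log-H\"older hypothesis \eqref{cond2-px}, \eqref{eq-R1} keeps harmless; (ii) the comparison estimates themselves are only clean for the frozen-exponent equation, so one needs the quantitative stability of solutions under perturbing $p(\cdot)$ to a constant on a small ball, which again forces a careful bookkeeping of $R_\om$ and of how small $r$ must be taken relative to $R_0$ and $R_\om$. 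I expect the boundary comparison estimate on Reifenberg-flat domains, combined with this exponent-freezing, to be the most delicate piece, since one must simultaneously flatten the boundary (Remark~\ref{rem1}(b)), freeze the coefficients in the BMO sense, and freeze the exponent, all on the same small ball and with all error terms of type $\mathcal{O}(\delta)$.
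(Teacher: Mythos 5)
Your overall architecture (comparison estimates, a weighted good-$\lambda$ inequality via Lemma \ref{weightedlemma2} and a Vitali-type covering, then layer-cake integration) is the same as the paper's, but there are two genuine gaps that would derail the argument as written.

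First, the comparison estimate you propose,
\[
\fint_{B_r}|Du-Dv|^{p(x)}\,dx \lesi \mathcal{O}(\delta)\fint_{B_{2r}}(s^2+|Du|^2)^{p(x)/2}\,dx + \Big(\f{|\mu|(B_{2r})}{r^{n-1}}\Big)^{\f{p(x)}{p(x)-1}},
\]
is the one appropriate for divergence-form data $\di(|F|^{\pd-2}F)$ (as in \cite{BOR}), not for a general Radon measure. With measure data and $p$ allowed to approach $2-1/n$, the difference of gradients in the first comparison step can only be controlled at the $L^1$ level: the paper proves $\fint_{B_{2R}}|D(u-w)|\,dx\leq CF(\mu,u,B_{2R})$ (Proposition \ref{prop1-interior}), where $F$ carries the exponent $\f{1}{p^+-1}$ on $|\mu|(B_{2r})/r^{n-1}$ \emph{and} an extra term $\big[\f{|\mu|}{r^{n-1}}\big]\big(\fint(|Du|+1)\big)^{2-p^+}$ for $p^+\leq 2$; only the subsequent exponent-freezing and coefficient-freezing steps (the nonlinearity $b$ of \eqref{eq-b} and $\bar b_{B_R}$) are carried out in $L^{p_2}$. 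Your stronger $L^{p(x)}$ comparison against the measure is not available, and it is precisely the $L^1$-level comparison that explains why the right-hand side of \eqref{eq-mainthm} involves $|\Mu|^{q(x)/(p(x)-1)}$.

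Second, your mechanism for running the good-$\lambda$ iteration with a variable exponent does not parse: you propose to partition dyadically in $\lambda$ and freeze $q(x)$, $p(x)$ "on the relevant small ball" for each annulus, but the level sets $\{|Du|^{p(\cdot)}>\lambda\}$ are not spatially localized, so there is no such ball. The paper's device is the opposite order: first cover $\Om$ by finitely many balls $\Om_R(x_i)$ with $R$ fixed by \eqref{eq-defn R} (in particular $R\leq K_0^{-1}/20$ and $\nu(80R)<\gamma_3\sigma_0$), set $q_-=\inf_{\Om_{2R}(x_i)}q$, and prove the good-$\lambda$ inequality for $\mathcal{M}(|Du|^{q(\cdot)/q_-}\chi_{\Om_{2R}})$ against $\Mu^{\f{1}{\pd-1}\f{q(\cdot)}{q_-}}$; the layer-cake integration then uses the \emph{constant} power $q_-$, and the bound $q(x)/q_-\leq 1+\sigma_0$ together with $\int_\Om|Du|^{1+\sigma_0}\leq K_0$ makes the truncation level $\lambda_0$ produce exactly the additive term $(|\mu|(\Om)^{\sigma_0/(\gamma_1-1)}+|\Om|)^{n+1}$. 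Relatedly, your absorption step invokes boundedness of the maximal operator on $L^{q(\cdot)/p(\cdot)}_w$ for $w\in A_\infty$, which is false in that generality; the paper needs only the unweighted weak $(1,1)$ bound for $\mathcal{M}$ plus Lemma \ref{weightedlemma2}, and absorbs the maximal-function term because the good-$\lambda$ constant $B\epsilon$ satisfies $BA_0^{q_-}\epsilon<1/2$.
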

\begin{rem}
	In the particular case when $q(x)\equiv q\in (0,\vc)$, the term $(|\mu|(\Om)^{\f{\sigma_0}{\gamma_1-1}}+|\mu|(\Om))^{n+1}$ in \eqref{eq-mainthm} can be removed. More precisely, in this case we have
	$$
	\int_{\Om} |Du|^{q}w(x)dx \lesi \int_{\Om} |\Mu|^{\f{q}{p(x)-1}}w(x)dx.
	$$
	The proof can be done in the same manner as that of Theorem \ref{mainthm1}. However, we do not pursue it and we would leave it to the interested reader.
\end{rem} 
We now have the following consequences of Theorem \ref{mainthm1}.

\begin{cor}\label{mainthm2}
	Let $q(\cdot)$ be defined as in \eqref{cond1-qx}, \eqref{cond2-qx} and  \eqref{cond3-qx} with $1<\gamma_3\leq \gamma_4<n$. Then there
	exists  a positive constant $\delta=\delta(n,\Lambda_1, \Lambda_2, \pd,q(\cdot))$ such that the following holds. If the domain $\Omega$ is a $(\delta, R_0)$ Reifenberg flat domain with $ R_0>0$, and the nonlinearity  $a$ satisfies \eqref{eq1-functiona}, \eqref{eq1s-functiona} and the small BMO condition \eqref{eq2-Assumption}, then for any weak solution $u\in W^{1,\pd}_0(\Om)$ to the problem \eqref{Maineq} we obtain that
	\begin{equation}\label{eq2-mainthm}
	d\mu=fdx, f\in L^{q(\cdot)}(\Om)\Rightarrow |Du|^{\pd-1}\in L^{\f{nq(x)}{n-q(x)}}(\Om).
	\end{equation}
	
\end{cor}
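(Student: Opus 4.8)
The plan is to deduce this Sobolev-type embedding statement directly from Theorem \ref{mainthm1} by choosing the weight $w\equiv 1$ (which trivially belongs to $A_\infty$) and picking a suitable value of $\sigma_0$. Under the hypothesis $1<\gamma_3\le \gamma_4<n$, the exponent $q(\cdot)$ satisfies all of \eqref{cond1-qx}, \eqref{cond2-qx}, \eqref{cond3-qx}, and moreover $\frac{nq(x)}{n-q(x)}$ is a well-defined, bounded, continuous, log-H\"older exponent on $\Om$ (the log-H\"older modulus of $\frac{nq(\cdot)}{n-q(\cdot)}$ is controlled by that of $q(\cdot)$ since $t\mapsto \frac{nt}{n-t}$ is Lipschitz on the compact interval $[\gamma_3,\gamma_4]\subset(1,n)$). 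Since $d\mu=f\,dx$ with $f\in L^{q(\cdot)}(\Om)$, the measure $\mu$ has finite total mass on the bounded domain $\Om$, so the hypotheses of Theorem \ref{mainthm1} are met.

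The first step is to apply Theorem \ref{mainthm1} with $w\equiv1$ and with the variable exponent $q(\cdot)$ replaced by $\tilde q(\cdot):=\frac{nq(\cdot)}{n-q(\cdot)}$; note $\tilde q$ satisfies \eqref{cond3-qx} with new constants $\gamma_3'=\frac{n\gamma_3}{n-\gamma_3}>1$ and $\gamma_4'=\frac{n\gamma_4}{n-\gamma_4}<\infty$. Fix any $\sigma_0$ with $0<\sigma_0<\min\{\frac{n(\gamma_1-1)}{n-1},n\}$. Theorem \ref{mainthm1} then yields
$$
\int_\Om |Du|^{(p(x)-1)\tilde q(x)}\,dx \lesi (|\mu|(\Om)^{\f{\sigma_0}{\gamma_1-1}}+|\Om|)^{n+1} + \int_\Om |\Mu|^{\tilde q(x)}\,dx,
$$
so it remains to show the right-hand side is finite. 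The first term is finite since $|\mu|(\Om)=\|f\|_{L^1(\Om)}<\infty$ and $|\Om|<\infty$. For the second term, the key point is the pointwise bound $\Mu(x)\le c_n \mathbf I_1(|\mu|)(x)=c_n\int_\Om \frac{f(y)\,dy}{|x-y|^{n-1}}$, i.e. $\Mu\lesi I_1 f$ where $I_1$ is the first-order Riesz potential. By the classical Sobolev mapping property of the Riesz potential (Hardy--Littlewood--Sobolev), $I_1:L^{q}\to L^{\frac{nq}{n-q}}$ boundedly for each fixed $q\in(1,n)$; combined with the Diening--type extrapolation/boundedness of the Riesz potential on variable exponent spaces (valid under \eqref{cond1-qx}, \eqref{cond2-qx}, \eqref{cond3-qx} with $1<\gamma_3\le\gamma_4<n$), one gets $\|I_1 f\|_{L^{\tilde q(\cdot)}(\Om)}\lesi \|f\|_{L^{q(\cdot)}(\Om)}<\infty$, hence $\int_\Om |\Mu|^{\tilde q(x)}\,dx<\infty$. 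Therefore $\int_\Om |Du|^{(p(x)-1)\tilde q(x)}\,dx<\infty$, which by definition of the variable exponent Lebesgue space means $|Du|^{p(\cdot)-1}\in L^{\frac{nq(\cdot)}{n-q(\cdot)}}(\Om)$, as claimed.

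The main technical obstacle is the variable-exponent Sobolev inequality for the Riesz potential, $\|I_1 f\|_{L^{nq(\cdot)/(n-q(\cdot))}}\lesi \|f\|_{L^{q(\cdot)}}$: while the pointwise domination $\Mu\lesi I_1 f$ is immediate from the excerpt, the boundedness of $I_1$ between these two variable exponent spaces is not proved here and must be invoked from the literature on variable exponent function spaces (e.g. Diening, or Cruz-Uribe--Fiorenza); one must also verify that the target exponent $\frac{nq(\cdot)}{n-q(\cdot)}$ inherits the log-H\"older regularity needed to apply such a result. A minor subtlety is that Theorem \ref{mainthm1} is stated for solutions in $W^{1,p(\cdot)}(\Om)$ whereas the corollary assumes $u\in W^{1,p(\cdot)}_0(\Om)$; since $W^{1,p(\cdot)}_0(\Om)\subset W^{1,p(\cdot)}(\Om)$, the theorem applies verbatim. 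Once these ingredients are in place, the corollary follows by the short chain of implications above.
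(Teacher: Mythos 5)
Your proposal is correct and follows essentially the same route as the paper: apply the equivalent form \eqref{eqs-mainthm} of Theorem \ref{mainthm1} with $w\equiv 1$ and $q(\cdot)$ replaced by $\tilde q(\cdot)=\f{nq(\cdot)}{n-q(\cdot)}$, then conclude from the mapping property of the first-order fractional operator between $L^{q(\cdot)}(\Om)$ and $L^{nq(\cdot)/(n-q(\cdot))}(\Om)$. The only cosmetic difference is that you pass through the pointwise domination $\Mu\lesi \mathbf{I}_1(|f|)$ and the variable-exponent Sobolev inequality for the Riesz potential, whereas the paper cites the boundedness of $\mathbb{M}_1$ itself from \cite[Theorem 1.3]{CCF}; these are interchangeable, and your extra remarks (log-H\"older regularity of $\tilde q$, finiteness of $|\mu|(\Om)$) are correct.
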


In particular case when $q(x)$ is independent of $x$, Theorem \ref{mainthm1} deduces the following result.
\begin{cor}\label{mainthm3}
	Let $q\in (0,\vc)$, $w\in A_{\vc}$ and $0<\sigma_0<\min\left\{\f{n(\gamma_1-1)}{n-1},n\right\}$. Then there
	exists  a positive constant $\delta=\delta(n,\Lambda_1, \Lambda_2, \pd,q,w)$ such that the following holds. If the domain $\Omega$ is a $(\delta, R_0)$ Reifenberg flat domain with $ R_0>0$, and the nonlinearity $a$ satisfies \eqref{eq1-functiona}, \eqref{eq1s-functiona} and the small BMO condition \eqref{eq2-Assumption}, then for any weak solution $u\in W^{1,\pd}_0(\Om)$ to the problem \eqref{Maineq} the following estimate holds true
	\begin{equation}\label{eq3-mainthm}
	\Big\||Du|^{p(\cdot)-1}|\Big\|_{L^q_w(\Om)} \leq C\left[(|\mu|(\Om)^{\f{\sigma_0}{\gamma_1-1}}+|\Om|)^{\f{n+1}{q}} + \Big\|\Mu\Big\|_{L^q_w(\Om)}\right]
	\end{equation}
	where $C$ is a constant depending on $n,\Lambda_1, \Lambda_2, \pd, q, R_0,w, \sigma_0$.
	
	As a consequence, for $\f{1}{r}-\f{1}{q}=\f{1}{n}$ and $w^q\in A_{1+q/r'}$  if $d\mu=fdx, f \in L^r_{w^r}(\Om)$, then  we have
	\begin{equation}\label{eq3s-mainthm}
	\Big\||Du|^{p(\cdot)-1}|\Big\|_{L^q_{w^q}(\Om)} \leq C\left[(\|f\|_{L^1(\Om)}^{\f{\sigma_0}{\gamma_1-1}}+|\Om|)^{\f{n+1}{q}} + \Big\|f\Big\|_{L^r_{w^r}(\Om)}\right].
	\end{equation}
\end{cor}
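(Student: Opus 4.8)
The plan is to obtain both parts of Corollary~\ref{mainthm3} directly from Theorem~\ref{mainthm1}; the only genuine analytic ingredient beyond Theorem~\ref{mainthm1} will be the classical weighted (Muckenhoupt--Wheeden) bound for the first-order Riesz potential $\mathbf{I}_1$.

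First I would prove the estimate \eqref{eq3-mainthm}. A constant exponent $q(\cdot)\equiv q\in(0,\vc)$ trivially satisfies \eqref{cond1-qx} and \eqref{cond2-qx} (with $\nu\equiv 0$) as well as \eqref{cond3-qx} (with $\gamma_3=\gamma_4=q$), so Theorem~\ref{mainthm1} applies to this exponent and to the given weight $w\in A_\vc$, producing in particular the equivalent inequality \eqref{eqs-mainthm}. For a constant exponent one has $(p(x)-1)q(x)=(p(x)-1)q$, hence
$$
\int_\Om |Du|^{(p(x)-1)q}\,w\,dx=\big\||Du|^{p(\cdot)-1}\big\|_{L^q_w(\Om)}^{q},
\qquad
\int_\Om |\Mu|^{q}\,w\,dx=\big\|\Mu\big\|_{L^q_w(\Om)}^{q}.
$$
Raising \eqref{eqs-mainthm} to the power $1/q$ and using the elementary inequality $(A+B)^{1/q}\le C_q\big(A^{1/q}+B^{1/q}\big)$ (valid for every $q>0$) yields \eqref{eq3-mainthm}.

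Next I would deduce \eqref{eq3s-mainthm}. Since $1+q/r'\in(1,\vc)$, the hypothesis $w^q\in A_{1+q/r'}$ gives $w^q\in A_\vc$, so \eqref{eq3-mainthm} holds with the weight $w^q$ in place of $w$. Writing $d\mu=f\,dx$ and extending $f$ by zero outside $\Om$, we get $d|\mu|=|f|\,dx$, whence $|\mu|(\Om)=\|f\|_{L^1(\Om)}$, and, by the pointwise bound $\Mu(x)\le c_n\mathbf{I}_1(|\mu|)(x)=c_n\mathbf{I}_1(|f|\mathbf{1}_\Om)(x)$ recalled in the Introduction,
$$
\big\|\Mu\big\|_{L^q_{w^q}(\Om)}\le c_n\,\big\|\mathbf{I}_1(|f|\mathbf{1}_\Om)\big\|_{L^q_{w^q}(\Rn)}.
$$
Now I invoke the weighted estimate for $\mathbf{I}_1$: the relation $\f1r-\f1q=\f1n$ (which forces $1<r<n$) together with the fact that $w^q\in A_{1+q/r'}$ is equivalent to $w$ lying in the Muckenhoupt class $A_{r,q}$, i.e. $\sup_{B}\big(\fint_B w^q\big)^{1/q}\big(\fint_B w^{-r'}\big)^{1/r'}<\vc$, implies $\|\mathbf{I}_1 g\|_{L^q_{w^q}(\Rn)}\lesi\|g\|_{L^r_{w^r}(\Rn)}$. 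Applying this with $g=|f|\mathbf{1}_\Om$ and substituting into \eqref{eq3-mainthm} (for $w^q$) gives \eqref{eq3s-mainthm}.

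The only point requiring a little care --- not really an obstacle, since Theorem~\ref{mainthm1} does all the work --- is the translation between the stated condition $w^q\in A_{1+q/r'}$ and the $A_{r,q}$ condition under which the fractional integral theorem is usually phrased, and keeping track of the admissible range $1<r<n$ implicit in $\f1r=\f1q+\f1n$. The remaining steps (checking that a constant exponent is admissible in Theorem~\ref{mainthm1}, identifying the weighted $L^q$-norms with the corresponding modular integrals, and the zero extension of $f$) are routine.
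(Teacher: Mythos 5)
Your proposal is correct and follows essentially the same route as the paper: specialize Theorem \ref{mainthm1} (in the form \eqref{eqs-mainthm}) to the constant exponent $q(\cdot)\equiv q$ to get \eqref{eq3-mainthm}, and then combine it with the weighted bound $\|\mathbb{M}_1 f\|_{L^q_{w^q}}\lesssim \|f\|_{L^r_{w^r}}$ under $\tfrac1r-\tfrac1q=\tfrac1n$ and $w^q\in A_{1+q/r'}$ to get \eqref{eq3s-mainthm}. The only cosmetic difference is that you justify the latter bound by dominating $\mathbb{M}_1$ by the Riesz potential $\mathbf{I}_1$ and invoking Muckenhoupt--Wheeden, whereas the paper simply cites the maximal-function estimate directly; this is the same standard fact.
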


We note that the estimate \eqref{eq3-mainthm} not only gives the $L^q$-weighted estimate for $|Du|^{p(x)-1}$ but also implies the  
estimate on Morrey space for $|Du|^{p(x)-1}$. We now recall the definition of Morrey space. 

For $0<q<\vc$ and $0<\lambda<n$, the  Morrey function spaces $L^{q;\lambda}(\Omega)$ is defined as the set of all measurable functions $f$ such that
$$
\|f\|_{L^{q; \lambda}(\Omega)}=\sup_{x\in \Omega}\sup_{0<r\leq {\rm diam}\Omega}\f{1}{r^{\lambda/q}}\|f\|_{L^{q}(B_{r}(x)\cap \Omega)}<\vc.
$$
Using a standard argument, see for example \cite{Ph}, from the weighted estimate \eqref{eq3-mainthm} we obtain the following Morrey space estimate.
\begin{cor}\label{mainthm4}
	Let $q\in (0,\vc)$, $\lambda\in (0,n)$ and $0<\sigma_0<\min\left\{\f{n(\gamma_1-1)}{n-1},n\right\}$. Then there
	exists  a positive constant $\delta=\delta(n,\Lambda_1, \Lambda_2, \pd,q,\lambda)$ such that the following holds. If the domain $\Omega$ is a $(\delta, R_0)$ Reifenberg flat domain with $ R_0>0$, and the nonlinearity $a$  satisfies \eqref{eq1-functiona}, \eqref{eq1s-functiona} and the small BMO condition \eqref{eq2-Assumption}, then for any weak solution $u\in W_0^{1,\pd}(\Om)$ to the problem \eqref{Maineq}, the following estimate holds true
	\begin{equation}\label{eq4-mainthm}
	\Big\||Du|^{p(\cdot)-1}|\Big\|_{L^{q;\lambda}(\Om)} \leq C\left[(|\mu|(\Om)^{\f{\sigma_0}{\gamma_1-1}}+|\Om|)^{\f{n+1}{q}} + \Big\|\Mu\Big\|_{L^{q;\lambda}(\Om)}\right]
	\end{equation}
	where $C$ is a constant depending on $n,\Lambda_1, \Lambda_2, \pd, q, R_0,\sigma_0$.
	
\end{cor}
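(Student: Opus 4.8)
The plan is to deduce \eqref{eq4-mainthm} directly from the weighted estimate \eqref{eq3-mainthm} of Corollary \ref{mainthm3}, by testing that estimate against a family of Muckenhoupt weights tailored to the balls appearing in the Morrey norm. Fix a weak solution $u$ and put $g=|Du|^{\pd-1}$; the goal is to bound $\sup_{x_0\in\Om,\,0<r\le\operatorname{diam}\Om} r^{-\lambda}\int_{\Om_r(x_0)}g^q\,dx$. For a fixed pair $(x_0,r)$ I would use the weight $w_{x_0,r}(x):=r^{\beta-\lambda}(|x-x_0|+r)^{-\beta}$, where $\beta\in(\lambda,n)$ is a fixed number depending only on $\lambda$ and $n$ (say $\beta=(\lambda+n)/2$). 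Three properties of this weight will be used: (i) $w_{x_0,r}\in A_1$ with $[w_{x_0,r}]_{A_1}\le c(n,\lambda)$ \emph{uniformly} in $x_0$ and $r$ — this follows from the standard fact that the truncated power weight $(|x-x_0|+r)^{-\beta}$, $0<\beta<n$, lies in $A_1$ with constant depending only on $n,\beta$ (equivalently, $w_{x_0,r}$ is comparable to a constant multiple of $[\mathcal M(\chi_{B_r(x_0)})]^{\beta/n}$, which is $A_1$ by the Coifman--Rochberg lemma since $0<\beta/n<1$); (ii) $w_{x_0,r}\sim r^{-\lambda}$ on $B_r(x_0)$, more precisely $2^{-\beta}r^{-\lambda}\le w_{x_0,r}\le r^{-\lambda}$ there; and (iii) $w_{x_0,r}(x)\le 2^{-j\beta}r^{-\lambda}$ on the dyadic shell $B_{2^{j+1}r}(x_0)\setminus B_{2^{j}r}(x_0)$ for every $j\ge0$.

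Granting this, the argument runs as follows. Since the smallness threshold $\delta$ in Corollary \ref{mainthm3} depends on the weight only through its $A_\infty$ constant (hence, via Lemma \ref{weightedlemma2}, through $\kappa_w,c_w$), property (i) lets me apply \eqref{eq3-mainthm} with $w=w_{x_0,r}$ for a single $\delta=\delta(n,\Lambda_1,\Lambda_2,\pd,q,\lambda)$. Raising \eqref{eq3-mainthm} to the power $q$ gives $\int_\Om g^q w\,dx\lesssim(|\mu|(\Om)^{\sigma_0/(\gamma_1-1)}+|\Om|)^{n+1}+\int_\Om|\Mu|^q w\,dx$. Property (ii) turns the left-hand side into an upper bound for $r^{-\lambda}\int_{\Om_r(x_0)}g^q\,dx$ (up to the factor $2^\beta$). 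For the last term I decompose $\Om$ into $\Om_r(x_0)$ and the shells $\Om\cap(B_{2^{j+1}r}(x_0)\setminus B_{2^{j}r}(x_0))$, $j\ge0$, and use the elementary consequence $\int_{B_\rho(x_0)\cap\Om}|\Mu|^q\le\rho^\lambda\|\Mu\|_{L^{q;\lambda}(\Om)}^q$ of the Morrey definition together with (ii)--(iii); this yields $\int_\Om|\Mu|^q w\,dx\le\bigl(1+2^\lambda\sum_{j\ge0}2^{j(\lambda-\beta)}\bigr)\|\Mu\|_{L^{q;\lambda}(\Om)}^q\lesssim\|\Mu\|_{L^{q;\lambda}(\Om)}^q$, the geometric series converging precisely because $\beta>\lambda$. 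Combining these estimates, taking the supremum over $(x_0,r)$, and extracting the $q$-th root gives \eqref{eq4-mainthm}.

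I expect the only genuine obstacle to be the correct choice of weight in the first step; everything after it is routine bookkeeping. The naive candidate $w(x)=|x-x_0|^{-\lambda}$ (which is $A_1$ for $0<\lambda<n$ and is $\gtrsim r^{-\lambda}$ on $B_r(x_0)$) fails because $\int_\Om|\Mu|^q|x-x_0|^{-\lambda}\,dx$ can diverge at $x_0$, and the plain truncation $\min\{|x-x_0|^{-\lambda},r^{-\lambda}\}$ only gives $\int_\Om|\Mu|^q w\,dx\lesssim\log(\operatorname{diam}\Om/r)\,\|\Mu\|_{L^{q;\lambda}(\Om)}^q$, with an unwanted logarithm. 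Building in the steeper off-diagonal decay $(|x-x_0|+r)^{-\beta}$ with $\lambda<\beta<n$ is exactly what removes that logarithm while keeping the weight uniformly $A_1$. A minor point to verify along the way is that the shell bound $\int_{B_\rho(x_0)\cap\Om}|\Mu|^q\le\rho^\lambda\|\Mu\|_{L^{q;\lambda}(\Om)}^q$ may also be used for radii $\rho>\operatorname{diam}\Om$; this is immediate since $\Om\su\overline{B_{\operatorname{diam}\Om}(x_0)}$ for $x_0\in\Om$, so such shells contribute nothing beyond the full integral over $\Om$.
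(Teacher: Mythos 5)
Your proposal is correct and is essentially the "standard argument" the paper invokes by reference to \cite{Ph}: test the weighted estimate \eqref{eq3-mainthm} against the uniformly-$A_1$ family of truncated power weights $w_{x_0,r}(x)=r^{\beta-\lambda}(|x-x_0|+r)^{-\beta}$ with $\lambda<\beta<n$, use the comparability $w_{x_0,r}\sim r^{-\lambda}$ on $B_r(x_0)$ for the left-hand side, and sum the dyadic-shell contributions of $\int_\Om|\Mu|^q w_{x_0,r}$ via the Morrey bound and the convergent geometric series. Your choice of $\beta>\lambda$ to avoid the logarithmic loss, and the observation that the uniform $A_1$ bound keeps both $\delta$ and the constant in \eqref{eq3-mainthm} independent of $(x_0,r)$, are exactly the points the cited argument relies on.
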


In general, if the measure $\mu$ is merely a Radon measure with finite total mass, the weak solution $u\in W^{1,\pd}_0(\Om)$ to \eqref{Maineq} may not exist. In this situation, we employ the notion of SOLAs (Solution Obtained as Limit of Approximations). It is well-known that these solution may not be in $W^{1,\pd}_0(\Om)$, but in $W^{1,\pd -1}_0(\Om)$. In the particular case if $\mu\in W^{-1,\pd}(\Om)$, the dual space of $W^{1,\pd}_0(\Om)$, it is well-known that there exists a unique solution $u\in W^{1,\pd}_0(\Om)$ to \eqref{Maineq}, and in this case the SOLA and the weak solution to \eqref{Maineq} coincide.  See for example \cite{BH}.  From the above results, by a standard approximation procedure as in \cite{BH} we are able to obtain
\begin{thm}
	\label{mainthm5}
	Let $u\in W^{1,\pd -1}_0(\Om)$ be a SOLA to \eqref{Maineq}. Assume that all assumptions in the respective statements hold true. Then Theorem \ref{mainthm1} and Corollaries \ref{mainthm2}, \ref{mainthm3} and \ref{mainthm4} hold true. 
\end{thm}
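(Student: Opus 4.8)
\section*{Proof proposal for Theorem \ref{mainthm5}}

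The plan is to obtain Theorem \ref{mainthm5} by approximating the Radon measure $\mu$ by smooth data, applying the already–established results (Theorem \ref{mainthm1} and its corollaries) to the approximating problems, and then passing to the limit. First I would fix a standard mollifier and set $\mu_j=\mu*\rho_j$ (suitably restricted so as to remain supported in a fixed neighbourhood), so that $\mu_j\in C_c^\vc(\Om)\subset W^{-1,\pd}(\Om)$, $|\mu_j|(\Om)\leq |\mu|(\Om)$ uniformly in $j$, and $\mu_j\to\mu$ weakly-$*$ as measures. For each $j$ there is a genuine weak solution $u_j\in W^{1,\pd}_0(\Om)$ to $\di a(Du_j,x)=\mu_j$, and by definition of SOLA one may assume (along a subsequence) that $u_j\to u$ in $W^{1,\pd-1}_0(\Om)$, $Du_j\to Du$ a.e.\ in $\Om$, and $u_j\to u$ in $L^1(\Om)$; these are exactly the convergences built into the construction of SOLAs as in \cite{BH}, so I would simply invoke that construction rather than reprove it.

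The heart of the argument is then a uniform bound followed by Fatou. Applying Theorem \ref{mainthm1} to each $u_j$ with the measure $\mu_j$ gives
\begin{equation*}
\int_{\Om} |Du_j|^{q(x)}w(x)\,dx \leq C\Big[(|\mu_j|(\Om)^{\f{\sigma_0}{\gamma_1-1}}+|\Om|)^{n+1} + \int_{\Om} |\mathbb{M}_1(\mu_j)|^{\f{q(x)}{p(x)-1}}w(x)\,dx\Big],
\end{equation*}
with $C$ independent of $j$. Using $|\mu_j|(\Om)\leq |\mu|(\Om)$ controls the first term uniformly. For the second term I would use the pointwise bound $\mathbb{M}_1(\mu_j)(x)=\mathbb{M}_1(|\mu|*\rho_j)(x)\lesi \mathbb{M}_1(|\mu|)(x)$, which follows because $\mathbb{M}_1$ commutes appropriately with translations and convolution against a probability density (by Minkowski's integral inequality, $\fint$ of a translate-average is bounded by the sup over translates), so that
\begin{equation*}
\int_{\Om} |\mathbb{M}_1(\mu_j)|^{\f{q(x)}{p(x)-1}}w(x)\,dx \lesi \int_{\Om} |\mathbb{M}_1(\mu)|^{\f{q(x)}{p(x)-1}}w(x)\,dx,
\end{equation*}
uniformly in $j$. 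Combining, $\sup_j \int_{\Om}|Du_j|^{q(x)}w\,dx<\vc$, and since $Du_j\to Du$ a.e., Fatou's lemma (applied to the nonnegative integrands $|Du_j|^{q(x)}w$) yields $\int_{\Om}|Du|^{q(x)}w\,dx$ bounded by the same right-hand side, which is \eqref{eq-mainthm}. The equivalent form \eqref{eqs-mainthm} and the corollaries follow verbatim from this estimate, since Corollaries \ref{mainthm2}--\ref{mainthm4} were derived purely formally from Theorem \ref{mainthm1} (Sobolev embedding of variable exponent type for Corollary \ref{mainthm2}, the case $q$ constant for Corollary \ref{mainthm3}, and the standard weighted-to-Morrey passage of \cite{Ph} for Corollary \ref{mainthm4}); none of those deductions uses anything about $u$ beyond the gradient integrability furnished above.

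The main obstacle I anticipate is not the limit passage itself but making sure the approximation is consistent with \emph{all} the hypotheses simultaneously: one needs $\mu_j$ to keep finite total mass bounded by that of $\mu$ while also having $\mathbb{M}_1(\mu_j)\lesi \mathbb{M}_1(\mu)$ with a constant independent of $j$, and one needs the constant $\delta$ (hence the smallness threshold on the BMO seminorm of $a$ and the Reifenberg flatness of $\Om$) to be the same for all approximating problems — this is automatic here because the $\delta$ in Theorem \ref{mainthm1} depends only on $n,\Lambda_1,\Lambda_2,\pd,q(\cdot),w$ and not on the data $\mu$. A secondary technical point is the a.e.\ convergence of the gradients, $Du_j\to Du$; this is a known feature of the SOLA construction (it is part of what \cite{BH} establishes), and I would state it as such rather than reprove the monotonicity/compactness argument. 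Apart from these bookkeeping matters the proof is routine, which is why the statement is phrased as an immediate consequence.
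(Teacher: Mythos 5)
Your argument is precisely the ``standard approximation procedure as in \cite{BH}'' that the paper invokes for Theorem \ref{mainthm5} without supplying any details: mollify $\mu$, solve with the smooth data $\mu_j$, apply Theorem \ref{mainthm1} uniformly in $j$, and pass to the limit using the a.e.\ gradient convergence built into the SOLA construction together with Fatou's lemma. The one step I would justify differently is the uniform bound $\mathbb{M}_1(\mu_j)\lesi \mathbb{M}_1(\mu)$: the translate-averaging/Minkowski remark only yields $\mathbb{M}_1(\mu_j)(x)\le \sup_{|y|\le 1/j}\mathbb{M}_1(\mu)(x-y)$, which need not be pointwise comparable to $\mathbb{M}_1(\mu)(x)$ (consider a Dirac mass), whereas the correct pointwise inequality follows by splitting scales --- for $r\ge 1/j$ one has $|\mu_j|(B_r(x))\le|\mu|(B_{2r}(x))$, and for $r<1/j$ the bound $\|\rho_j\|_\infty\lesi j^{n}$ gives $|\mu_j|(B_r(x))\le C j^{n}r^{n}|\mu|(B_{2/j}(x))$ --- both cases being dominated by $C(n)\,\mathbb{M}_1(\mu)(x)$.
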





\section{Approximation results}
In this section, we always assume that the nonlinearity satisfies \eqref{eq1-functiona}, \eqref{eq1s-functiona}, the small BMO norm condition \eqref{eq2-Assumption}  and the domain $\Om$ is a $(\delta, R_0)$ Reifenberg flat domain.

Let $u$ be a weak solution to the problem \eqref{Maineq}. We now fix $0<\sigma_0<\min\left\{\f{n(\gamma_1-1)}{n-1},n\right\}$. Then by a standard argument as in the proof of \cite[Theorem 1]{BG} there exists $C=K(n,\sigma_0,\Lambda_1, \gamma_2, \Om)>1$ so that 
\begin{equation*}
\int_{\Om}|Du|^{\sigma_0} dx  \leq C(|\mu|(\Om)^{\f{\sigma_0}{\gamma_1-1}}+|\Om|).
\end{equation*}
Hence, for any $0<q\leq \sigma_0$ we have
\begin{equation}\label{bounds-Du}
\int_{\Om}|Du|^{q} dx\leq \int_{\Om}|Du|^{\sigma_0}+1 dx  \leq C(|\mu|(\Om)^{\f{\sigma_0}{\gamma_1-1}}+|\Om|)=:K_0.
\end{equation}

For each $r>0$ and $x\in \Om$, we denote $\Om_r:=\Om_r(x)$ and set
$$
F(\mu,u,\Om_r)=\Big[\f{|\mu|(\Om_{r})}{r^{n-1}}\Big]^{\f{1}{p_{\Om_r}^+-1}} +\Big[\f{|\mu|(\Om_{r})}{r^{n-1}}\Big]\Big(\fint_{\Om_r}(|Du|+1)dx\Big)^{2-p_{\Om_r}^+}\chi_{\{p_{\Om_r}^+\leq 2\}}+1,
$$
where $p_{\Om_r}^+=\sup_{y\in \Om_r}p(y)$ and
$$
\chi_{\{p_{\Om_r}^+\leq 2\}}=\begin{cases}
1, \ \ \ &p_{\Om_r}^+\leq 2,\\
0, \ \ \ &p_{\Om_r}^+> 2.
\end{cases}
$$
\subsection{Interior Estimates}\label{InEs}
Let $x_0\in \Om$ and $0<R<\f{R_0\wedge R_\om\wedge K_0^{-1}}{10}$ so that $B_{2R}\equiv B_{2R}(x_0)\subset \Om$, where $R_\omega$ is a constant in \eqref{eq-R1}, and $a\wedge b=\min\{a,b\}$. We set 
$$
p_1=\inf_{x\in B_{2R}}p(x), \ \ \ p_2=\sup_{x\in B_{2R}}p(x).
$$

Let $u\in C^1(\Om)$ be a solution to \eqref{Maineq}. We now consider the following equation
\begin{equation}\label{eq1-interior}
\left\{
\begin{aligned}
&\di a(Dw, x)=0 \quad &\text{in} \quad B_{2R},\\
&w=u  \quad &\text{on} \quad \partial B_{2R},
\end{aligned}
\right.
\end{equation}
We have the following estimate.

\begin{prop}
	\label{prop1-interior}
	Let $w$ be a weak solution to \eqref{eq1-interior}. Then there exists a constant $C$  so that
	\begin{equation}
	\label{eq-Du-w}
	\fint_{B_{2R}}|D(u-w)|dx \leq CF(\mu,u,B_{2R}).
	\end{equation}
	As a consequence, we have
	\begin{equation}
	\label{eq1-Dw}
	\fint_{B_{2R}}|Dw| dx\leq C\left[ \fint_{B_{2R}}|Du| dx+F(\mu,u,B_{2R}) \right].
	\end{equation}
\end{prop}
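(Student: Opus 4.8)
The plan is to prove the comparison estimate \eqref{eq-Du-w} by testing the difference of the two equations against $u-w$ and then using the monotonicity/coercivity of $a$ together with the measure bound for the right-hand side. First I would subtract \eqref{eq1-interior} from \eqref{Maineq} and use $u-w\in W_0^{1,p(\cdot)}(B_{2R})$ (or rather a truncation of it) as a test function, obtaining
\[
\int_{B_{2R}}\langle a(Du,x)-a(Dw,x),\,D(u-w)\rangle\,dx=\int_{B_{2R}}(u-w)\,d\mu.
\]
By the monotonicity consequence \eqref{eq2-functiona}, the left-hand side controls $\Lambda_2\int_{B_{2R}}|D(u-w)|^{p(x)}dx$ from below (and, on the set where $p(x)\le 2$, one keeps the full degenerate quantity $(s^2+|Du|^2+|Dw|^2)^{\frac{p(x)-2}{2}}|D(u-w)|^2$, which is the origin of the second, correction term in $F(\mu,u,B_{2R})$). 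The right-hand side is estimated by $\|u-w\|_{L^\infty}|\mu|(B_{2R})$ after a truncation, or — to avoid assuming $L^\infty$ bounds that may fail for SOLA — by the usual truncation argument of Boccardo--Gallouët: test with $T_k(u-w)$, use the measure bound $|\mu|(B_{2R})$ and the Sobolev embedding on $B_{2R}$ to pass from $|D(u-w)|$ on level sets to the full gradient.

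The key step is to convert these $L^{p(x)}$-type energy bounds into the claimed $L^1$ bound on $\fint_{B_{2R}}|D(u-w)|\,dx$. Since the exponent is variable, I would freeze it: on $B_{2R}$ we have $p_1\le p(x)\le p_2$ with $p_2-p_1\le\omega(2R)\le\omega(R_\omega)$ small, and because $R<K_0^{-1}/10$ and the log-Hölder condition \eqref{cond1-px}--\eqref{eq-R1} holds, quantities like $R^{p_2-p_1}$ and $(\fint|Du|+1)^{p_2-p_1}$ are comparable to constants; this is exactly where \eqref{eq-R1} and the smallness of $R$ relative to $K_0$ are used. Then a Hölder inequality in the measure-normalized form together with the scaling $\frac{|\mu|(B_{2R})}{(2R)^{n-1}}$ yields \eqref{eq-Du-w} with the three explicit pieces of $F$: the term $[\,|\mu|(B_{2R})/r^{n-1}\,]^{1/(p^+-1)}$ coming from the superquadratic/nondegenerate part, the correction term with the $2-p^+$ power coming from the subquadratic case, and the harmless $+1$.

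Finally, \eqref{eq1-Dw} follows immediately from \eqref{eq-Du-w} by the triangle inequality $\fint|Dw|\le\fint|Du|+\fint|D(u-w)|$. I expect the main obstacle to be the careful bookkeeping of the variable exponent when passing from the natural energy estimate (which lives in $L^{p(x)}$) to the $L^1$ average of $D(u-w)$: one must split into the regions $\{p_{B_{2R}}^+\le 2\}$ and $\{p_{B_{2R}}^+>2\}$, handle the degeneracy parameter $s$, and control all the exponent oscillations using log-Hölder continuity and the constraint $R\lesssim K_0^{-1}$, so that all the implicit constants genuinely depend only on the stated data and not on $R$ or the solution. The measure-data truncation to justify the test function is technically delicate but standard (Boccardo--Gallouët), so I would cite \cite{BG} for that part and concentrate the writing on the exponent-freezing argument.
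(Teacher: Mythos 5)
Your proposal is correct and follows essentially the same route as the paper: the paper simply cites \cite[pp.~651--652]{BH} (for $p_1\ge 2$) and \cite[Lemma 5.1]{BaH} (for $p_1<2$) for the Boccardo--Gallou\"et-type comparison estimate with the frozen exponent $p_0=p(x_0)$, and the only work it actually carries out is your final step of trading $p_0$ for $p_2=\sup_{B_{2R}}p$ using log-H\"older continuity together with $|\mu|(B_{2R})\le K_0\le R^{-1}$. Your sketch of the truncation/monotonicity derivation is precisely the content of those cited lemmas, so nothing essential is missing.
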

\begin{proof}
	We consider two cases: $p_1\geq 2$ and $2-\f{1}{n}<p_1<2$.
	
	\noindent{\bf Case 1: $p_1\geq 2$.} In this case, the inequality \eqref{eq-Du-w} was proved in \cite[pp. 651--652]{BH}.\\

	\noindent{\bf Case 2: $p_1< 2$.} It was proved in \cite[Lemma 5.1]{BaH} that
	 $$
	\fint_{B_{2R}}|D(u-w)|dx\lesi \Big[\f{|\mu|(B_{2R})}{R^{n-1}}\Big]^{\f{1}{p_0-1}} +\Big[\f{|\mu|(B_{2R})}{R^{n-1}}\Big]\Big(\fint_{B_{2R}}(|Du|+1)dx\Big)^{2-p_0}+1,
	 $$ 
	where $p_0=p(x_0)$.
	
	Note that $|\mu|(B_{2R})\leq |\mu|(\Om)\leq K_0\leq R^{-1}$. Hence,
	$$
	\begin{aligned}
	\Big[\f{|\mu|(B_{2R})}{R^{n-1}}\Big]^{\f{1}{p_0-1}}&=\Big[\f{|\mu|(B_{2R})}{R^{n-1}}\Big]^{\f{1}{p_2-1}}\Big[\f{|\mu|(B_{2R})}{R^{n-1}}\Big]^{\f{p_2-p_0}{(p_0-1)(p_2-1)}}\\
	&\leq \Big[\f{|\mu|(B_{2R})}{R^{n-1}}\Big]^{\f{1}{p_2-1}}R^{-\f{n(p_2-p_0)}{(p_0-1)(p_2-1)}}\\
	&\leq \Big[\f{|\mu|(B_{2R})}{R^{n-1}}\Big]^{\f{1}{p_2-1}}R^{-\f{n\om(2R)}{(\gamma_1-1)^2}}\\
	&\leq C\Big[\f{|\mu|(B_{2R})}{R^{n-1}}\Big]^{\f{1}{p_2-1}},
	\end{aligned}
	$$
	noting that we used \eqref{eq-R1}  in the last inequality.
	
	On the other hand, by \eqref{bounds-Du} and \eqref{eq-R1},
	$$
	\begin{aligned}
	\Big[\f{|\mu|(B_{2R})}{R^{n-1}}\Big]\Big(\fint_{B_{2R}}(|Du|+1)dx\Big)^{2-p_0}&=\Big[\f{|\mu|(B_{2R})}{R^{n-1}}\Big]\Big(\fint_{B_{2R}}(|Du|+1)dx\Big)^{2-p_2}\Big(\fint_{B_{2R}}(|Du|+1)dx\Big)^{p_2-p_0}\\
	&\leq \Big[\f{|\mu|(B_{2R})}{R^{n-1}}\Big]\Big(\fint_{B_{2R}}(|Du|+1)dx\Big)^{2-p_2}(R^{-n}K_0)^{p_2-p_0}\\
	&\leq \Big[\f{|\mu|(B_{2R})}{R^{n-1}}\Big]\Big(\fint_{B_{2R}}(|Du|+1)dx\Big)^{2-p_2}R^{-(n+1)(p_2-p_0)}\\
	&\leq \Big[\f{|\mu|(B_{2R})}{R^{n-1}}\Big]\Big(\fint_{B_{2R}}(|Du|+1)dx\Big)^{2-p_2}R^{-(n+1)\om(2R)}\\
	&\leq \Big[\f{|\mu|(B_{2R})}{R^{n-1}}\Big]\Big(\fint_{B_{2R}}(|Du|+1)dx\Big)^{2-p_2}.
	\end{aligned}
	$$
	This completes our proof.
\end{proof}

	
We now record the higher integrability result in \cite[Theorem 5]{AM2}.
\begin{lem}
	\label{lem1-higherIntegrability} Let $w\in W^{1,p(\cdot)}(B_{2R})$ be a weak solution to \eqref{eq1-interior}. Then there exists a constant $\sigma^*=\sigma^*(n,\Lambda_1, \Lambda_2, \gamma_2, |\mu|(\Om), \Om)$ so that for $0\leq \sigma<\sigma^*$ and any $q\in (0,1]$ there exists $C>0$ such that
\begin{equation}\label{eq-higherinte}
	\Big(\fint_{B_R}|Dw|^{p(x)(1+\sigma)}dx\Big)^{1+\sigma}\leq C\left[\Big(\fint_{B_{2R}}|Dw|^{qp(x)}dx\Big)^{1/q}+1\right].
	\end{equation}
\end{lem}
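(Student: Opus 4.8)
The plan is to prove this as a standard Gehring-type self-improvement, the only genuinely new feature being that every quantity carries the variable exponent $p(\cdot)$, which must be kept under control via the log-H\"older hypothesis. \emph{Step 1 (Caccioppoli inequality).} For any $B_\rho(z)$ with $B_{2\rho}(z)\subset B_{2R}$ I would test the weak formulation of \eqref{eq1-interior} with $\varphi=\eta^{\gamma_2}\big(w-\overline w_{B_{2\rho}(z)}\big)$, where $\eta\in C^\vc_c(B_{2\rho}(z))$ is a cut-off with $\eta\equiv 1$ on $B_\rho(z)$ and $|D\eta|\lesi 1/\rho$. Using the ellipticity \eqref{eq1s-functiona} (through \eqref{eq2-functiona}) to bound the left-hand side from below by a multiple of $\int\eta^{\gamma_2}|Dw|^{p(x)}\,dx$ up to an additive constant produced by $s\in[0,1]$, and the growth bound \eqref{eq1-functiona} together with Young's inequality on the right, one arrives at
\[
\fint_{B_\rho(z)}|Dw|^{p(x)}\,dx\ \lesi\ \fint_{B_{2\rho}(z)}\Big|\frac{w-\overline w_{B_{2\rho}(z)}}{\rho}\Big|^{p(x)}\,dx\ +\ 1 .
\]
Already here the oscillation of $p(\cdot)$ on $B_{2\rho}(z)$ has to be absorbed, which is done through the log-H\"older condition \eqref{cond1-px}--\eqref{cond2-px} and bounds of the form $\rho^{-\om(2\rho)}\leq C$ in the spirit of \eqref{eq-R1}.

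\emph{Step 2 (reverse H\"older inequality).} Combining Step 1 with a Sobolev--Poincar\'e inequality adapted to the variable exponent setting --- writing $p^-_\rho=\inf_{B_{2\rho}(z)}p$, $p^+_\rho=\sup_{B_{2\rho}(z)}p$ and again invoking log-H\"older continuity to keep powers of the type $\rho^{\,p^+_\rho-p^-_\rho}$ bounded --- produces, for some $\theta=\theta(n,\gamma_1,\gamma_2)\in(0,1)$ and for every ball with $B_{2\rho}(z)\subset B_{2R}$, a reverse H\"older inequality
\[
\fint_{B_\rho(z)}\big(|Dw|^{p(x)}+1\big)\,dx\ \lesi\ \Big(\fint_{B_{2\rho}(z)}\big(|Dw|^{p(x)}+1\big)^{\theta}\,dx\Big)^{1/\theta}.
\]

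\emph{Step 3 (Gehring-type self-improvement).} Such an inequality, valid on all balls in $B_{2R}$, is self-improving: by the variable-exponent Gehring lemma (in the spirit of the arguments of Zhikov and of Acerbi--Mingione) applied to $g:=|Dw|^{p(\cdot)}+1$, and using the standard freedom to lower the exponent on the right-hand side of a reverse H\"older inequality, there exist $\sigma^*>0$ and, for each $q\in(0,1]$, a constant $C$ with
\[
\Big(\fint_{B_R}\big(|Dw|^{p(x)}+1\big)^{1+\sigma}\,dx\Big)^{1/(1+\sigma)}\ \leq\ C\,\Big(\fint_{B_{2R}}\big(|Dw|^{p(x)}+1\big)^{q}\,dx\Big)^{1/q}\qquad(0\le\sigma<\sigma^*).
\]
Discarding the harmless additive $1$'s on the right, using $|Dw|^{qp(x)}\le(|Dw|^{p(x)}+1)^{q}$, and performing a routine adjustment of exponents, one recovers \eqref{eq-higherinte}.

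The step requiring genuine care is the handling of the variable exponent itself: since $|Dw|^{p(x)}$ carries no scaling invariance, each of Steps 1--3 forces one to ``freeze'' $p$ on small balls and pay a factor $\rho^{-\om(\rho)}$, which is exactly what the log-H\"older hypothesis \eqref{cond2-px} keeps bounded, and Step 3 needs the variable-exponent refinement of Gehring's lemma rather than its classical form. The stated dependence of $\sigma^*$ on $|\mu|(\Om)$ and $\Om$ enters only through the admissible range of radii $R<\tfrac{R_0\wedge R_\om\wedge K_0^{-1}}{10}$: the constant $K_0$ of \eqref{bounds-Du} depends on $|\mu|(\Om)$ and $\Om$, and, via Proposition \ref{prop1-interior}, it also controls $\fint_{B_{2R}}|Dw|$, which is the natural starting integrability for the iteration.
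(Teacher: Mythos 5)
Your Steps 1--2 are a reasonable outline of how the reverse H\"older inequality for $|Dw|^{p(\cdot)}$ is obtained; the paper does not rederive this but simply quotes it as \cite[Theorem 5]{AM2}, so up to that point you are on the same route. The genuine gap is in Step 3. The variable-exponent Gehring improvement is \emph{not} uniform in the solution: as the paper records, \cite[Theorem 5]{AM2} yields the higher integrability only for
$$
\sigma<\sigma_1=\min\Big\{1,\,c_1\Big(\int_{B_{2R}}|Dw|^{p(x)}dx+1\Big)^{-\f{4(p_2-p_1)}{p_1}}\Big\},
$$
i.e.\ the admissible range of $\sigma$ degrades with the energy of the particular solution $w$ (and with $R$ through $p_2-p_1$). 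Asserting that the variable-exponent Gehring lemma produces ``some $\sigma^*>0$'' depending only on structural data therefore assumes exactly what the lemma is asserting. Your closing remark that the dependence of $\sigma^*$ on $|\mu|(\Om)$ and $\Om$ ``enters only through the admissible range of radii'' misplaces the issue: it enters through the need to bound the energy $\int_{B_{2R}}|Dw|^{p(x)}dx$ a priori, so that $\sigma_1$ is bounded below by a constant independent of $w$ and $R$.

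Closing this gap is in fact the entire content of the paper's proof, and it needs two further ingredients. First, an $L^1$-to-$L^{p(\cdot)}$ energy estimate for solutions of the homogeneous equation, $\int_{B_R}|Dw|^{p(x)}dx\le CR^{-\alpha/(1-\beta)}\big(\int_{B_{2R}}|Dw|dx+1\big)^{\gamma/(1-\beta)}$ (quoted from \cite[p.~654]{BH}), which converts the energy appearing in $\sigma_1$ into a power of the $L^1$ average of $Dw$; the resulting factor $R^{-c(p_2-p_1)}$ is then tamed by the log-H\"older condition via \eqref{eq-R1}. Second, the a priori bounds \eqref{bounds-Du} and the comparison estimate \eqref{eq1-Dw}, which give $\int_{B_{2R}}|Dw|dx+1\le c(\Om,|\mu|(\Om))$ --- this is where $|\mu|(\Om)$ and $\Om$ actually enter $\sigma^*$. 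You gesture at the second ingredient but omit the first entirely, and without it the bound on $\fint_{B_{2R}}|Dw|$ does not control the quantity that governs $\sigma_1$. Once a uniform $\sigma^*$ is in hand, lowering the exponent on the right-hand side to $qp(x)$ for $q\in(0,1]$ via Gehring's lemma \cite[Theorem 6.7]{Giu} is routine, as you say.
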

\begin{proof}
It was proved in \cite[Theorem 5]{AM2} that for any $\sigma\in (0,\sigma_1)$ we have
$$
\Big(\fint_{B_R}|Dw|^{p(x)(1+\sigma)}dx\Big)^{1+\sigma}\leq C\left[\fint_{B_{2R}}|Dw|^{p(x)}dx +1\right],
$$
with 
$$
\sigma_1=\min\left\{1, c_1\Big(\int_{B_{2R}}|Dw|^{p(x)}dx +1\Big)^{-\f{4(p_2-p_1)}{p_1}}\right\},
$$	
for some constant $c_1>0$.

Note that we can get rid of the dependence of the constant $\sigma_1$ on $\int_{B_{2R}}|Dw|^{p(x)}dx$. To do this we recall an estimate in \cite[p. 654]{BH}
$$
\int_{B_R}|Dw|^{p(x)}dx\leq CR^{-\f{\alpha}{1-\beta}} \Big(\int_{B_{2R}}|Dw|dx +1\Big)^{\f{\gamma}{1-\beta}}, 
$$
where 
$$
\alpha=n\Big(\f{\kappa p_2}{p_1}-1\Big), \beta=\f{p_2}{p_1}\times \f{1-\f{\kappa}{p_1}}{1-\f{1}{p_1}}, \gamma=\f{p_2}{p_1}\times \f{\kappa -1}{1-\f{1}{p_1}}, \kappa=\sqrt \f{n+1}{n}.
$$
Hence,
$$
\Big(\int_{B_{2R}}|Dw|^{p(x)}dx +1\Big)^{\f{4(p_2-p_1)}{p_1}}\leq CR^{-\f{\alpha}{1-\beta}\times \f{4(p_2-p_1)}{p_1}} \Big(\int_{B_{2R}}|Dw|dx +1\Big)^{\f{\gamma}{1-\beta}\times \f{4(p_2-p_1)}{p_1}}.
$$
Moreover, observe that  
$$
\alpha\leq n\Big(\f{\kappa \gamma_2}{\gamma_1}-1\Big), \beta\leq \f{\gamma_2}{\gamma_1}\times \f{1-\f{\kappa}{\gamma_2}}{1-\f{1}{\gamma_2}}, \gamma\leq \f{\gamma_2}{\gamma_1}\times \f{\kappa -1}{1-\f{1}{\gamma_2}}.
$$
As a consequence,
$$
\Big(\int_{B_{2R}}|Dw|^{p(x)}dx +1\Big)^{\f{4(p_2-p_1)}{p_1}}\leq CR^{-c_1(p_2-p_1)} \Big(\int_{B_{2R}}|Dw|dx +1\Big)^{c_2},
$$
where $c_1, c_2$ are two constants independent of $Dw$ and $R$.

Note that from \eqref{cond2-px} we have
$$
R^{-c_1(p_2-p_1)}\leq CR^{-c_1\gamma(4R)}\leq C.
$$
On the other hand, from \eqref{bounds-Du} and \eqref{eq1-Dw} by a simple manipulation we get
$$
\int_{B_{2R}}|Dw|dx +1\leq c(\Om, |\mu|(\Om)).
$$
Therefore, there exists $\sigma^*=\sigma^*(n,\Lambda_1, \Lambda_2, \gamma_2, |\mu|(\Om), \Om)$ so that 
$$
\sigma_1>\sigma^*.
$$ 
Hence, the desired estimate follows from  Gehrings lemma in \cite[Theorem 6.7]{Giu}.
\end{proof}

Consider the nonlinearity $b(\cdot,\cdot)$ associated to $a(\cdot,\cdot)$ defined by
\begin{equation}\label{eq-b}
b(\xi,x)=\begin{cases}
(s^2+|\xi|^2)^{\f{p_2-p(x)}{2}}a(\xi,x), x\in B_{2R}\equiv B_{2R}(x_0),\\
(s^2+|\xi|^2)^{\f{p_2-p(x_0)}{2}}a(\xi,x_0), x\in  \Rn\backslash B_{2R}(x_0).
\end{cases}
\end{equation}
\begin{lem}\label{lem-nonlinearity b}
There exists $R_a>0$ so that for any $0<R<R_a$, the nonlinearity $b$ defined as above satisfies the following conditions:
\begin{equation}\label{eq1-functionb}
(s^2+|\xi|^2)^{1/2}|D_\xi b(\xi,x)|+|b(\xi,x)|\leq 3\Lambda_1 (s^2+|\xi|^2)^{\f{p_2-1}{2}},
\end{equation}
and
\begin{equation}\label{eq2-functionb}
\langle D_\xi b(\xi,x)\eta, \eta \rangle  \geq \f{\Lambda_2}{2} (s^2+|\xi|^2)^{\f{p_2-2}{2}}|\xi|^2,
\end{equation}
for all $x,\xi,\eta \in \mathbb{R}^n$.
\end{lem}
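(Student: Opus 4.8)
The plan is to verify the two inequalities \eqref{eq1-functionb} and \eqref{eq2-functionb} directly from the structure conditions \eqref{eq1-functiona}, \eqref{eq1s-functiona} on $a$ and the definition \eqref{eq-b} of $b$, exploiting that the exponent oscillation $p_2-p_1$ on $B_{2R}$ is small once $R$ is small, by virtue of the log-H\"older continuity \eqref{cond1-px}--\eqref{cond2-px}. The point is that the multiplicative factor $(s^2+|\xi|^2)^{(p_2-p(x))/2}$ (respectively $(s^2+|\xi|^2)^{(p_2-p(x_0))/2}$ outside $B_{2R}$) carries an exponent lying in $[0, p_2-p_1]\subset[0,\om(4R)]$, which we want to force to be close to $1$ uniformly in $\xi$.

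First I would treat the two regions of the definition \eqref{eq-b} on the same footing: in both cases $b(\xi,x)=(s^2+|\xi|^2)^{\tau/2}a(\xi,\tilde x)$ with $\tilde x\in\{x,x_0\}$ and $0\le \tau\le p_2-p_1\le \om(4R)$, and correspondingly $p(\tilde x)$ appears in the exponents coming from \eqref{eq1-functiona}, \eqref{eq1s-functiona}. Combining the factor with the bound \eqref{eq1-functiona} for $|a|$ gives $|b(\xi,x)|\le \Lambda_1 (s^2+|\xi|^2)^{\tau/2}(s^2+|\xi|^2)^{(p(\tilde x)-1)/2}=\Lambda_1(s^2+|\xi|^2)^{(\tau+p(\tilde x)-1)/2}$, and since $\tau+p(\tilde x)\le p_2+\om(4R)$ we get an exponent $(p_2-1)/2$ up to an extra factor $(s^2+|\xi|^2)^{\om(4R)/2}$; the trouble is this extra factor is not bounded for large $|\xi|$. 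I would instead split according to whether $s^2+|\xi|^2\le 1$ or $>1$: if $\le1$ then $(s^2+|\xi|^2)^{(\tau+p(\tilde x)-1)/2}\le (s^2+|\xi|^2)^{(p_1-1)/2}\wedge 1$ is harmless, and if $>1$ then $(s^2+|\xi|^2)^{\tau/2}$ should be compared to $(s^2+|\xi|^2)^{(p_2-p(\tilde x))/2}$ with the right sign. Actually the cleaner bookkeeping: for $x\in B_{2R}$, $\tau=p_2-p(x)$ exactly, so $b(\xi,x)=(s^2+|\xi|^2)^{(p_2-p(x))/2}a(\xi,x)$ and $|b(\xi,x)|\le\Lambda_1(s^2+|\xi|^2)^{(p_2-p(x))/2}(s^2+|\xi|^2)^{(p(x)-1)/2}=\Lambda_1(s^2+|\xi|^2)^{(p_2-1)/2}$ with no leftover, and likewise for $x\notin B_{2R}$ with $x_0$ in place of $x$; so the ellipticity/growth exponents match perfectly and the only place the oscillation enters is the derivative term, where $D_\xi b$ produces, via the product rule, a term $(p_2-p(x))(s^2+|\xi|^2)^{(p_2-p(x))/2-1}\xi\,a(\xi,x)$ plus $(s^2+|\xi|^2)^{(p_2-p(x))/2}D_\xi a(\xi,x)$. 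Bounding the first term by $(p_2-p(x))(s^2+|\xi|^2)^{-1/2}|\xi|\,(s^2+|\xi|^2)^{(p_2-1)/2}\le \om(4R)(s^2+|\xi|^2)^{(p_2-1)/2}$ and the second by $(s^2+|\xi|^2)^{(p_2-p(x))/2}\cdot\Lambda_1(s^2+|\xi|^2)^{(p(x)-1)/2}/(s^2+|\xi|^2)^{1/2}=\Lambda_1(s^2+|\xi|^2)^{(p_2-2)/2}$, so that $(s^2+|\xi|^2)^{1/2}|D_\xi b|\le(\Lambda_1+\om(4R))(s^2+|\xi|^2)^{(p_2-1)/2}\le 2\Lambda_1(s^2+|\xi|^2)^{(p_2-1)/2}$ once $\om(4R)\le\Lambda_1$. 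Adding $|b|\le\Lambda_1(s^2+|\xi|^2)^{(p_2-1)/2}$ gives \eqref{eq1-functionb} with the stated constant $3\Lambda_1$.

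For the monotonicity \eqref{eq2-functionb} I would compute $\langle D_\xi b(\xi,x)\eta,\eta\rangle = (s^2+|\xi|^2)^{(p_2-p(x))/2}\langle D_\xi a(\xi,x)\eta,\eta\rangle + (p_2-p(x))(s^2+|\xi|^2)^{(p_2-p(x))/2-1}\langle\xi,\eta\rangle\langle a(\xi,x),\eta\rangle$ on $B_{2R}$ (and with $x_0$ outside). The leading term is, by \eqref{eq1s-functiona}, at least $\Lambda_2(s^2+|\xi|^2)^{(p_2-p(x))/2}(s^2+|\xi|^2)^{(p(x)-2)/2}|\eta|^2=\Lambda_2(s^2+|\xi|^2)^{(p_2-2)/2}|\eta|^2$; the cross term is bounded in absolute value by $(p_2-p(x))(s^2+|\xi|^2)^{(p_2-p(x))/2-1}|\xi||\eta|\cdot\Lambda_1(s^2+|\xi|^2)^{(p(x)-1)/2}|\eta|\le \om(4R)\Lambda_1(s^2+|\xi|^2)^{(p_2-2)/2}|\eta|^2$, using $|\xi|\le(s^2+|\xi|^2)^{1/2}$. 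Hence $\langle D_\xi b(\xi,x)\eta,\eta\rangle\ge(\Lambda_2-\om(4R)\Lambda_1)(s^2+|\xi|^2)^{(p_2-2)/2}|\eta|^2\ge\frac{\Lambda_2}{2}(s^2+|\xi|^2)^{(p_2-2)/2}|\eta|^2$ provided $\om(4R)\le\Lambda_2/(2\Lambda_1)$. So I would choose $R_a>0$ small enough that $4R_a<R_\om$ and $\om(4R_a)\le\min\{\Lambda_1,\Lambda_2/(2\Lambda_1)\}$; this is possible by \eqref{cond2-px}. I would also remark that $b$ is well-defined (the two branches agree when $\supp$ conditions are consistent — actually they need not agree on $\partial B_{2R}$, but $b$ is defined by the case split with no continuity claimed, so nothing more is needed) and measurable in $x$, differentiable in $\xi$, which is inherited from $a$ and from smoothness of $\xi\mapsto(s^2+|\xi|^2)^{\theta}$. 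The main (only) obstacle is the purely notational one of handling the large-$|\xi|$ regime correctly: the naive estimate leaves a factor $(s^2+|\xi|^2)^{\om(4R)/2}$ which does not stay bounded, and the resolution is to notice that in \eqref{eq-b} the power of $(s^2+|\xi|^2)$ is designed to cancel the $p(x)$-dependent power in \eqref{eq1-functiona}--\eqref{eq1s-functiona} exactly, turning the oscillation into a small multiplicative constant $\om(4R)$ only in the lower-order cross terms arising from differentiating the prefactor; once this observation is made the rest is the routine estimation sketched above. (Note the statement \eqref{eq2-functionb} has $|\xi|^2$ where it presumably should read $|\eta|^2$; I would prove it with $|\eta|^2$.)
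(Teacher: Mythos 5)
Your proposal is correct and follows essentially the same route as the paper: bound $|b|$ and the leading term of $D_\xi b$ by exploiting the exact cancellation of the exponents, and treat the cross term from differentiating the prefactor as a perturbation of size $\om(4R)\Lambda_1$, choosing $R_a$ with $\om(4R_a)<\Lambda_2/(2\Lambda_1)$. Your version is in fact slightly more careful than the paper's (you write the cross term correctly as $(p_2-p(x))(s^2+|\xi|^2)^{\f{p_2-p(x)}{2}-1}\langle\xi,\eta\rangle\langle a(\xi,x),\eta\rangle$, where the paper has a typo, and you correctly note that the $|\xi|^2$ in \eqref{eq2-functionb} should be $|\eta|^2$).
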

\begin{proof}
	The proof of this lemma is similar to that  in \cite[p. 13]{BOR}. For the convenience of reader, we sketch it here.
	
	From \eqref{eq1-functiona}, by a simple calculation we have
	$$
	\begin{aligned}
	(s^2+|\xi|^2)^{1/2}|D_\xi b(\xi,x)|+|b(\xi,x)|&\leq \Lambda_1(1+\gamma(4r))(s^2+|\xi|^2)^{\f{p_2-1}{2}}\\
	&:=\Lambda_3(s^2+|\xi|^2)^{\f{p_2-1}{2}}.
	\end{aligned}
	$$
	
	It suffices to verify \eqref{eq2-functionb}. We need to prove that there exist $R_a>0$ so that for any $0<R<R_a$, the estimate \eqref{eq2-functionb} holds true for all $x,\xi,\eta \in \mathbb{R}^n$.
	
	Indeed, if $x\in B_{2R}$, we have 
	$$
	\begin{aligned}
	 \langle D_\xi b(\xi,x)\eta,\eta\rangle =&(s^2+|\xi|^2)^{\f{p_2-p(x)}{2}} \langle D_\xi a(\xi,x)\eta,\eta\rangle +
	(p_2-p(x))|\xi|(s^2+|\xi|^2)^{\f{p_2-p(x)}{2}-1}  \langle D_\xi a(\xi,x)\eta,\eta\rangle \\
	:=& I_1+I_2.
	\end{aligned}
	$$
	By \eqref{eq1s-functiona}, one gets that
	$$
	I_1\geq \Lambda_2 (s^2+|\xi|^2)^{\f{p_2-2}{2}}|\eta|^2. 
	$$
	Applying \eqref{eq1-functiona}, we have
	$$
	\begin{aligned}
	I_2&\geq -\Lambda_1(p_2-p_1)(s^2+|\xi|^2)^{\f{p_2-p(x)}{2}-1}(s^2+|\xi|^2)^{\f{\px-1}{2}}|\eta|^2\\
	&\geq \omega(4R)\Lambda_1 (s^2+|\xi|^2)^{\f{p_2-2}{2}}|\eta|^2.
	\end{aligned}
		$$ 
		
	Hence,
	$$
	\langle D_\xi b(\xi,x)\eta,\eta\rangle  \geq [\Lambda_2-\omega(4R)\Lambda_1] (s^2+|\xi|^2)^{\f{p_2-2}{2}}|\eta|^2.
	$$
	Then the constant $R_a$ can be chosen as a number satisfying $\gamma(4R_a)<\f{\Lambda_2}{2\Lambda_1}$.\\

	The case $x\in B_{2R}^c$ can be argued similarly. Hence, we complete the proof.
\end{proof}

We now consider the following equation
\begin{equation}\label{eq2-interior}
\left\{
\begin{aligned}
&\di b(Dh, x)=0 \quad &\text{in}& \quad B_{R},\\
&h=w  \quad &\text{on}& \quad \partial B_{R},
\end{aligned}
\right.
\end{equation}
where $w$ is a weak solution to the problem \eqref{eq1-interior}.

\begin{prop}
	\label{prop2-inter}
	For any $\epsilon>0$ there exists $R_\epsilon$ depending on $\epsilon$ only so that  if $h$ is a weak solution to \eqref{eq2-interior} with $0<R<\f{R_\epsilon\wedge R_0\wedge R_\om\wedge R_a \wedge K_0^{-1}}{10}$, then we have
	\begin{equation}
	\label{eq-prop2}
	\Big(\fint_{B_R}|D(h-w)|^{p_2}dx\Big)^{1/p_2}\leq \epsilon  \Big[F(\mu,u,B_{2R})+\fint_{B_{2R}}|Du|dx\Big]. 
	\end{equation}
\end{prop}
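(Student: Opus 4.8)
The plan is to compare the two equations \eqref{eq1-interior} and \eqref{eq2-interior} via an energy (Caccioppoli-type) estimate, exploiting the monotonicity \eqref{eq2-functionb} of $b$ and the smallness of the BMO oscillation $[{\bf a}]_{\beta,R_0}$ together with the log-H\"older control on $p_2-p(\cdot)$ over the small ball $B_{2R}$. The key observation is that on $B_{2R}$ the nonlinearity $b(\xi,x)=(s^2+|\xi|^2)^{(p_2-p(x))/2}a(\xi,x)$ has a $(p_2-\text{type})$ growth (Lemma \ref{lem-nonlinearity b}), so the frozen problem \eqref{eq2-interior} is of \emph{standard} $p_2$-growth, and the difference $h-w$ can be estimated by the distance between $b(Dw,x)$ and the "expected" value $b(Dh,x)$. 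Since $\di b(Dh,x)=0$ and $\di a(Dw,x)=0$ in $B_{2R}$, testing both weak formulations with $\varphi=h-w\in W^{1,p_2}_0(B_R)$ and subtracting gives
\begin{equation*}
\int_{B_R}\langle b(Dh,x)-b(Dw,x),D(h-w)\rangle\,dx = \int_{B_R}\langle a(Dw,x)-b(Dw,x),D(h-w)\rangle\,dx.
\end{equation*}

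**Next I would** bound the left-hand side below using \eqref{eq2-functionb} (in the form \eqref{eq2-functiona} adapted to $b$, giving $\gtrsim \int_{B_R}|D(h-w)|^{p_2}\,dx$ when $p_2\ge 2$, and a degenerate lower bound requiring the usual $s^2+|Dh|^2+|Dw|^2$ weight when $2-1/n<p_2<2$). For the right-hand side, write
\begin{equation*}
a(Dw,x)-b(Dw,x) = \Big[1-(s^2+|Dw|^2)^{\f{p_2-p(x)}{2}}\Big]a(Dw,x),
\end{equation*}
and estimate the scalar factor by $|p_2-p(x)|\,\log(\cdots)\lesi \om(4R)\log(1/R)=\mathcal{O}(R)$ on the set where $|Dw|$ is of moderate size, while on the set where $|Dw|$ is large one uses the higher-integrability bound of Lemma \ref{lem1-higherIntegrability} (which upgrades $\int_{B_{2R}}|Dw|^{p(x)}$ to $\int_{B_R}|Dw|^{p(x)(1+\sigma)}$ with $\sigma<\sigma^*$ independent of $Dw$, $R$) to absorb the extra power via H\"older's inequality. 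Then apply Young's inequality to move the $\int|D(h-w)|^{p_2}$ contribution to the left, and finally normalize by $\fint_{B_{2R}}|Dw|\,dx+F(\mu,u,B_{2R})$ using \eqref{eq1-Dw} from Proposition \ref{prop1-interior} together with \eqref{bounds-Du}; choosing $R$ small enough that the resulting $\mathcal{O}(R)$-factor is $<\epsilon$ produces \eqref{eq-prop2}.

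**The main obstacle** will be handling the subquadratic case $2-1/n<p_2<2$ cleanly: there the monotonicity \eqref{eq2-functionb} only yields a lower bound weighted by $(s^2+|Dh|^2+|Dw|^2)^{(p_2-2)/2}$, so one cannot directly recover an $L^{p_2}$ norm of $D(h-w)$; the standard remedy is to use the elementary vector inequality $|V-W|^{p_2}\lesi \big[(s^2+|V|^2+|W|^2)^{(p_2-2)/2}|V-W|^2\big]^{p_2/2}(s^2+|V|^2+|W|^2)^{p_2(2-p_2)/4}$ followed by H\"older with exponents $2/p_2$ and $2/(2-p_2)$, which reintroduces a factor $\big(\fint(s^2+|Dh|^2+|Dw|^2)^{p_2/2}\big)^{(2-p_2)/2}$ that must again be controlled by higher integrability and \eqref{eq1-Dw}. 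A secondary technical point is ensuring all constants and the threshold $R_\epsilon$ depend on $\epsilon$ alone (and the structural data), which forces one to feed in the $R$-independent higher-integrability exponent $\sigma^*$ from Lemma \ref{lem1-higherIntegrability} and the a priori bound \eqref{bounds-Du} rather than any quantity involving $\|Dw\|$ directly; this bookkeeping, while routine, is where the argument is easiest to get wrong.
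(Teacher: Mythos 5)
Your proposal follows essentially the same route as the paper: testing both weak formulations with $h-w\in W^{1,p_2}_0(B_R)$, lower-bounding via the monotonicity of $b$ (with the standard $(s^2+|Dh|^2+|Dw|^2)^{(p_2-2)/2}$-weighted argument and H\"older/Young when $2-1/n<p_2<2$), rewriting the error as $\bigl[1-(s^2+|Dw|^2)^{\f{p_2-p(x)}{2}}\bigr]a(Dw,x)$, taming the resulting logarithm with the higher-integrability Lemma \ref{lem1-higherIntegrability} and the $R$-independent exponent $\sigma^*$, and normalizing via \eqref{eq1-Dw} and \eqref{bounds-Du}. The only inaccuracy is your invocation of the small BMO condition, which plays no role in this comparison (it enters only later, in Proposition \ref{prop3-interior}); here the smallness comes entirely from the log-H\"older continuity of $p(\cdot)$.
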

\begin{proof}
	We consider two cases: $p_2< 2$ and $p_2\geq 2$.
	
	\noindent{\bf Case 1: $p_2<2$.} We first write
	$$
	|D(h-w)|^{p_2}= (s^2+|Dh|^2+|Dw|^2)^{-\f{p_2(p_2-2)}{4}}(s^2+|Dh|^2+|Dw|^2)^{\f{p_2(p_2-2)}{4}}|D(h-w)|^{p_2}.
	$$
	For $\tau_1>0$, using Young's inequality we obtain
	\begin{equation}\label{eq1-proof prop1 inter}
	\begin{aligned}
	\fint_{B_R}&|D(h-w)|^{p_2} dx\\
	&\leq \tau_1\fint_{B_{R}}(s^2+|Dh|^2+|Dw|^2)^{\f{p_2}{2}} dx + c(\tau_1)\fint_{B_{R}}(s^2+|Dh|^2+|Dw|^2)^{\f{p_2-2}{2}}|D(h-w)|^2dx\\
	&\leq c\tau_1\Big[\fint_{B_{R}}|D(h-w)|^{p_2}dxdt +\fint_{B_{R}}|Dw|^{p_2}dx +1\Big]\\
	& \ \ + c(\tau_1)\fint_{B_{R}}(s^2+|Dh|^2+|Dw|^2)^{\f{\px-2}{2}}|D(h-w)|^2dx.
	\end{aligned}
	\end{equation}
	
	Note that, by \eqref{eq2-functionb}, we have
	\begin{equation}\label{eq2-proof prop1 inter}
	\begin{aligned}
	\fint_{B_{R}}(s^2+|Dh|^2+|Dw|^2)^{\f{p_2-2}{2}}|D(h-w)|^2dx \leq C\fint_{B_{R}} \langle b(Dh,x)-b(Dw,x), Dh-Dw \rangle dx.
	\end{aligned}
	\end{equation}
	Substituting \eqref{eq2-proof prop1 inter} into \eqref{eq1-proof prop1 inter}, we obtain
	\begin{equation}\label{eq2s-proof prop1 inter}
	\begin{aligned}
	\fint_{B_R}|D(h-w)|^{p_2} dx\leq&  c\tau_1\fint_{B_R}|D(h-w)|^{p_2} dx +c\tau_1\Big(\fint_{B_{R}}|Dw|^{p_2}dx+1\Big)\\
	& + c(\tau_1)\fint_{B_{R}} \langle b(Dh,x)-b(Dw,x), Dh-Dw \rangle dx.
	\end{aligned}
	\end{equation}
	Moreover, from the definition of $b(x,\xi)$ and \eqref{eq1-functiona} we have
	$$
	\begin{aligned}
	\fint_{B_{R}} \langle b(Dh,x)-b(Dw,x), D(h-w) \rangle dx
	&=\fint_{B_{r}} \langle a(Dw,x)-b(Dw,x), D(h-w) \rangle dx\\
	&=\fint_{B_{r}} \Big|(s^2+|Dw|^2)^{\f{p_2-\px}{2}}-1\Big|\langle a(Dw,x), D(h-w) \rangle dx\\
	&\leq C\fint_{B_{r}} \Big[1-(s^2+|Dw|^2)^{\f{p_2-\px}{2}}\Big] (s+|Dw|)^{\px -1} |D(h-w)| dx.
	\end{aligned}
	$$
	Using the Mean Value Theorem, we obtain
	$$
	\begin{aligned}
	\fint_{B_{R}} &\langle b(Dh,x)-b(Dw,x), D(h-w) \rangle dx\\
	&\leq C\fint_{B_{R}}\sup_{\theta\in [0,1]} (p_2-p(x))\log(s^2+|Dw|^2)(s^2+|Dw|^2)^{\f{\theta(p_2-\px)}{2}}(s^2+|Dw|)^{\px -1} |D(h-w)| dx\\
	&\leq C\om(4R)\fint_{B_{R}} \log(1+|Dw|)(1+|Dw|)^{p_2-\px}(1+|Dw|)^{\px -1} |D(h-w)| dx,
	\end{aligned}
	$$
	where in the last inequality we used the log-H\"older condition \eqref{cond1-px}.
	
	Hence,
	$$\fint_{B_{R}} \langle b(Dh,x)-b(Dw,x), D(h-w) \rangle dx\leq C\omega(4R)\fint_{B_{R}} \log(1+|Dw|)(1+|Dw|)^{p_2-1} |D(h-w)| dx.
	$$
    Using Young's inequality, for $\tau_2>0$, which will be fixed later, we obtain
	\begin{equation}
	\label{eq2-prop2-inter}
	\begin{aligned}
	\fint_{B_{R}} &\langle b(Dh,x)-b(Dw,x), D(h-w) \rangle dx\\
	&\leq \tau_2 \omega(4R)\fint_{B_{R}}|D(h-w)|^{p_2} dx + c(\tau_2)\omega(4R)\fint_{B_{R}} \log(1+|Dw|)^{\f{p_2}{p_2-1}}(1+|Dw|)^{p_2}dx.
	\end{aligned}
	\end{equation}
	
	We now apply the inequality $\log(1+t)^{\f{p_2}{p_2-1}}\leq c\alpha^{-\f{p_2}{p_2-1}}(1+t)^{\alpha/4}$ for $t>0$ and $\alpha\in (0,1)$ with $t=|Dw|$ and $\alpha= p_2\om(4R)/4$ to conclude that
	$$
	\log(1+|Dw|)^{\f{p_2}{p_2-1}}\leq c(p_2\om(4R))^{-\f{p_2}{p_2-1}}(1+|Dw|)^{p_2\om(4R)}.
	$$	
	
	
	Substistuting this into \eqref{eq2-prop2-inter} we get that 
	$$
	\om(4R)\fint_{B_{R}} \log(1+|Dw|)^{\f{p_2}{p_2-1}}(1+|Dw|)^{p_2}dx\leq c\om(4R)^{\f{1}{p_2-1}}\fint_{B_{R}}(1+|Dw|)^{p_2(1+\gamma(4R))}dx.
$$
Moreover, it is obvious that 
$$
p_2(1+\gamma(4R))\leq (p(x)+\mod{(4R)})(1+\om(4R))\leq p(x)(1+3\om(4R)).
$$
Hence,
$$
\om(4R)\fint_{B_{R}} \log(1+|Dw|)^{\f{p_2}{p_2-1}}(1+|Dw|)^{p_2}dx\leq c\om(4R)^{\f{1}{p_2-1}}\fint_{B_{R}}(1+|Dw|)^{p(x)(1+3\gamma(4R))}dx.
$$	
This along with Lemma \ref{lem1-higherIntegrability} gives
\begin{equation}\label{eq-estimate p2}
\begin{aligned}
\om(4R)\fint_{B_{R}} \log(1+|Dw|)^{\f{p_2}{p_2-1}}(1+|Dw|)^{p_2}dx&\leq c\om(4R)^{\f{1}{p_2-1}}\Big(\fint_{B_{R}}|Dw|^{\f{\px}{p_2}}dx+1\Big)^{p_2[1+3\gamma(4R)]}\\	
	&\leq c\om(4R)^{\f{1}{p_2-1}}\Big(\fint_{B_{R}}|Dw|dx+1\Big)^{p_2(1+3\gamma(4R))},
	\end{aligned}
	\end{equation}
	as long as $3\om(4R)<\sigma^*$.

	We note that from \eqref{eq-R1}, 
	$$
	|B_R|^{-3p_2\omega(4R)}\lesi |R|^{-3\omega(4R) \gamma_2 n}\lesi 1. 
	$$
	As a consequence,
	\begin{equation}\label{eq2s-estimate p2}
	\begin{aligned}
	\omega(4R)\fint_{B_{R}} \log(1+|Dw|)^{\f{p_2}{p_2-1}}(1+|Dw|)^{p_2}dx&\leq c\om(4R)^{\f{1}{p_2-1}}\Big(\fint_{B_{R}}|Dw|dx+1\Big)^{p_2}\Big(\int_{B_{R}}|Dw|dx+R^n\Big)^{3\om(4R)}.
	\end{aligned}
	\end{equation}
	Moreover, from \eqref{eq1-Dw}, \eqref{bounds-Du} and the fact that $|\mu|(\Om)< K_0$ we have
	$$
	\begin{aligned}
	\fint_{B_{R}}|Dw|dx+R^n&\lesi \int_{B_{2R}}|Du| dx+R^n\Big[\f{|\mu|(B_{2R})}{R^{n-1}}\Big]^{\f{1}{p_2-1}} +R^n\Big[\f{|\mu|(B_{2R})}{R^{n-1}}\Big]\Big(\fint_{B_{2R}}(|Du|+1)dx\Big)^{2-p_2}+R^n\\
	&\lesi K_0 + K_0^{\f{1}{p_2-1}}+K_0 R^{-n(2-p_2)}K_0+1\\
	&\lesi R^{-1} + R^{-\f{1}{p_2-1}}+R^{-n(2-p_2)-2}+1\\
	&\lesi R^{-(n+2)},
	\end{aligned}
	$$
	where in the third inequality we used the fact that $K_0\leq R^{-1}$.
	
	This together with \eqref{eq-R1} gives
	$$
	\Big(\int_{B_{R}}|Dw|dx+R^n\Big)^{3\om(4R)}\lesi R^{-3\om(4R)(n+2)}\lesi 1.
	$$
	Inserting this into \eqref{eq2s-estimate p2} we obtain
	\begin{equation}\label{eq2-estimate p2}
	\begin{aligned}
	\omega(4R)\fint_{B_{R}} \log(1+|Dw|)^{\f{p_2}{p_2-1}}(1+|Dw|)^{p_2}dx
	&\leq c\om(4R)^{\f{1}{p_2-1}}\Big(\fint_{B_{R}}|Dw|dx+1\Big)^{p_2}.
	\end{aligned}
	\end{equation}
	
	We now combine \eqref{eq2-estimate p2} and  \eqref{eq2-prop2-inter} to imply that
	$$
	\begin{aligned}
	\fint_{B_{r}} &\langle b(Dh,x)-b(Dw,x), Dh-Dw \rangle dx\\
	&\leq \tau_2 \om(4R)\fint_{B_{r}}|D(h-w)|^{p_2} dx + c(\tau_1)\om(4R)^{\f{1}{p_2-1}}\Big(\fint_{B_{r}}|Dw|dx+1\Big)^{p_2}.
	\end{aligned}
	$$
	This in combination with \eqref{eq2s-proof prop1 inter} yields
	\begin{equation}\label{eq4-prop2-inter}
	\begin{aligned}
	\fint_{B_{R}}|D(h-w)|^{p_2} dx
	\leq& \tau_1\fint_{B_{R}}|D(h-w)|^{p_2} dx +\tau_1\Big(\fint_{B_{R}}|Dw|^{p_2} dx+1\Big)\\
	&+ \tau_2 \om(4R)\fint_{B_{r}}|D(h-w)|^{p_2} dx+ c(\tau_2)\om(4R)^{\f{1}{p_2-1}}\Big(\fint_{B_{r}}|Dw|dx+1\Big)^{p_2}.
	\end{aligned}
	\end{equation}
	On the other hand, arguing similarly to \eqref{eq2-estimate p2}, we arrive at
	$$
	\Big(\fint_{B_{R}}|Dw|^{p_2} dx+1\Big)\leq C\Big(\fint_{B_{R}}|Dw| dx+1\Big)^{p_2}.
	$$
	Inserting this into \eqref{eq4-prop2-inter}, we conclude that
	$$
	\begin{aligned}
		\fint_{B_{R}}|D(h-w)|^{p_2} dx
		\leq& \tau_1\fint_{B_{R}}|D(h-w)|^{p_2} dx + \tau_2 \om(4R)\fint_{B_{r}}|D(h-w)|^{p_2} dx\\
		&+ (c(\tau_2)\om(4R)^{\f{1}{p_2-1}}+\tau_1)\Big(\fint_{B_{r}}|Dw|dx+1\Big)^{p_2}.
	\end{aligned}
	$$
	Hence, the desired estimate \eqref{eq-prop2} follows from the inequality above by choosing $\tau_1, \tau_2$ and $R_\varepsilon$ to be sufficiently small.
	
	\bigskip
	
	\noindent{\bf Case 2: $p_2\geq 2$.} Observe that 
	$$
	\fint_{B_{R}}|D(h-w)|^{p_2}dx\lesi \fint_{B_{R}} \langle b(Dh,x)-b(Dw,x), D(h-w) \rangle dx.
	$$
	At this stage, repeating the argument used in Case 1, the desired estimate \eqref{eq-prop2} is proved.
\end{proof}

We now consider the following equation
\begin{equation}\label{eq3-interior}
	\left\{
	\begin{aligned}
		&\di \bar{b}_{B_R}(Dv)=0 \quad &\text{in}& \quad B_{R},\\
		&v=h  \quad &\text{on}& \quad \partial B_{R},
	\end{aligned}
	\right.
\end{equation}
where $h$ is a weak solution to the problem \eqref{eq2-interior}.

We have the following estimate.
\begin{prop}
	\label{prop3-interior}
	For any $\epsilon > 0$ there exist $\delta>0$ and $R_\epsilon>0$ so that if $v$ be a weak solution to the problem \eqref{eq3-interior} with $0<R<\f{R_\epsilon\wedge R_0\wedge R_\om\wedge R_a \wedge K_0^{-1}}{10}$, then we have
	\begin{equation}\label{eq1-prop3-inter}
	\|Dv\|_{L^\vc(B_{R/2})}\leq C\Big[F(\mu,u,B_{2R})+\fint_{B_{2R}}|Du|dx\Big],
	\end{equation}
	and
	\begin{equation}\label{eq2-prop3-inter}
	\Big(\fint_{B_{R/2}}|D(v-h)|^{p_2}dx\Big)^{1/p_2}\leq \epsilon\Big[F(\mu,u,B_{2R})+\fint_{B_{2R}}|Du|dx\Big].
	\end{equation}
\end{prop}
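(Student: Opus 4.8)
The plan is to exploit that the averaged vector field $\bar b_{B_R}(\xi)=\fint_{B_R}b(\xi,y)\,dy$ is independent of $x$ and, being an average of fields each obeying \eqref{eq1-functionb}--\eqref{eq2-functionb} (for $R<R_a$, by Lemma \ref{lem-nonlinearity b}), still obeys these structure conditions with exponent $p_2$ and the same constants. Hence $v$ solves an autonomous equation of $p_2$-Laplacian type, and the classical Lipschitz/$C^{1,\alpha}$ regularity theory for such equations (see, e.g., \cite{BOR}) gives
\[
\|Dv\|_{L^\vc(B_{R/2})}\leq C\Big(\fint_{B_R}(s+|Dv|)^{p_2}\,dx\Big)^{1/p_2}.
\]
Since $v=h$ on $\partial B_R$, testing \eqref{eq3-interior} with $v-h\in W^{1,p_2}_0(B_R)$ and using \eqref{eq2-functionb}, the growth \eqref{eq1-functionb} and Young's inequality yields the energy comparison $\fint_{B_R}(s+|Dv|)^{p_2}\,dx\lesi\fint_{B_R}(s+|Dh|)^{p_2}\,dx$. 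Combining this with Proposition \ref{prop2-inter} (to replace $Dh$ by $Dw$), the higher integrability of Lemma \ref{lem1-higherIntegrability} (applicable to $w$ since $b$ shares the structure of $a$), the bound \eqref{eq1-Dw} together with \eqref{bounds-Du}, and absorbing the factors $R^{-c\om(4R)}$ exactly as in the proof of Lemma \ref{lem1-higherIntegrability}, one obtains $\fint_{B_R}(s+|Dh|)^{p_2}\,dx\lesi\big(\fint_{B_{2R}}|Du|\,dx+F(\mu,u,B_{2R})\big)^{p_2}$. Chaining the three displays gives \eqref{eq1-prop3-inter}.

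For the comparison estimate \eqref{eq2-prop3-inter}, note that $h-v\in W^{1,p_2}_0(B_R)$, so we may test the equations for $h$ (vector field $b(\cdot,x)$) and for $v$ (vector field $\bar b_{B_R}$) against $h-v$ and subtract:
\[
\fint_{B_R}\big\langle \bar b_{B_R}(Dh)-\bar b_{B_R}(Dv),\,D(h-v)\big\rangle\,dx=\fint_{B_R}\big\langle \bar b_{B_R}(Dh)-b(Dh,x),\,D(h-v)\big\rangle\,dx.
\]
By \eqref{eq2-functionb} the left-hand side controls $\fint_{B_R}(s^2+|Dh|^2+|Dv|^2)^{\f{p_2-2}{2}}|D(h-v)|^2\,dx$; when $p_2\geq2$ this already dominates $\fint_{B_R}|D(h-v)|^{p_2}\,dx$, while for $p_2<2$ one recovers $\fint_{B_R}|D(h-v)|^{p_2}\,dx$ via the Young's inequality bookkeeping of Case 1 in the proof of Proposition \ref{prop2-inter}. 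On the right-hand side, since $b(\xi,x)=(s^2+|\xi|^2)^{\f{p_2-p(x)}{2}}a(\xi,x)$, a direct computation (using the log-H\"older bound \eqref{cond1-px} for the lower order error) gives $|\bar b_{B_R}(\xi)-b(\xi,x)|\lesi(s^2+|\xi|^2)^{\f{p_2-1}{2}}\big(\Theta(a,B_R)(x)+\mathcal{O}(\om(2R))\big)$, so Young's inequality with a small parameter $\tau$ bounds the right-hand side by $\tau\fint_{B_R}|D(h-v)|^{p_2}\,dx+c(\tau)\fint_{B_R}\Theta(a,B_R)(x)^{p_2'}(s+|Dh|)^{p_2}\,dx$, up to the negligible $\om(2R)$ contribution.

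To close, apply H\"older's inequality to the last term with the higher-integrability exponent for $Dh$ and the self-improved small BMO bound $[{\bf a}]_{\beta,R_0}\leq\mathcal{O}(\delta)$ coming from \eqref{eq2-Assumption}; as in the first paragraph this controls it by $\mathcal{O}(\delta)\big(\fint_{B_{2R}}|Du|\,dx+F(\mu,u,B_{2R})\big)^{p_2}$. Choosing $\tau$ to absorb the first term into the left-hand side and then $\delta$ so small that $\mathcal{O}(\delta)^{1/p_2}\leq\epsilon$, and restricting from $B_R$ to $B_{R/2}$, proves \eqref{eq2-prop3-inter}; the threshold $R_\epsilon$ is taken small enough that every residual factor $R^{-c\om(4R)}$ or $|B_R|^{-c\om(4R)}$ stays $\mathcal{O}(1)$, which is guaranteed by \eqref{eq-R1} under the size restriction $R<\f{R_\epsilon\wedge R_0\wedge R_\om\wedge R_a\wedge K_0^{-1}}{10}$. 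I expect the main obstacle to be the singular range $p_2<2$: the monotonicity \eqref{eq2-functionb} only yields the degenerate quantity $(s^2+|Dh|^2+|Dv|^2)^{\f{p_2-2}{2}}|D(h-v)|^2$, so recovering a clean power $\fint_{B_R}|D(h-v)|^{p_2}\,dx$ forces one to reproduce the interpolation argument of Proposition \ref{prop2-inter} and, crucially, to have a higher-integrability estimate for $Dh$ with an exponent compatible with the exponent $\beta$ in the self-improved BMO bound; the remaining bookkeeping of the $R^{-c\om(4R)}$ factors is routine given what was already done for Lemma \ref{lem1-higherIntegrability} and Proposition \ref{prop2-inter}.
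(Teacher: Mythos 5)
Your argument is correct and follows essentially the same route as the paper: a test-function comparison between $h$ and $v$ exploiting monotonicity, smallness extracted from the BMO condition \eqref{eq2-Assumption} via H\"older's inequality against a higher-integrability estimate, absorption of the $\tau$-term, the bound $\fint_{B_R}|Dh|^{p_2}dx+1\lesi\big[F(\mu,u,B_{2R})+\fint_{B_{2R}}|Du|dx\big]^{p_2}$ from Proposition \ref{prop2-inter} and the reverse-H\"older argument of \eqref{eq2-estimate p2}, and the classical Lipschitz bound for the frozen equation. The only (harmless, symmetric) deviation is that your error term $\bar b_{B_R}(Dh)-b(Dh,x)$ is evaluated at $Dh$, requiring Gehring higher integrability for $Dh$, whereas the paper evaluates it at $Dv$ and uses higher integrability for $Dv$.
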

\begin{proof}
	Let $R_\epsilon$ as in Proposition \eqref{prop2-inter}. We take care of \eqref{eq2-prop3-inter} first. Taking $v-h$ as a test function, it can be verified that
	$$
	\langle b(Dv,x)-b(Dh,x), D(h-v)\rangle=\langle b(Dv,x)-\bar{b}_{B_R}(Dv), D(h-v)\rangle.
	$$
	We take care of \eqref{eq2-prop3-inter} first. We give the proof as $p_2<2$, since the case $p_2\geq 2$ can be done in the same manner and even easier. Taking $v-h$ as a test function, it can be verified that
	$$
	\langle b(Dv,x)-b(Dh,x), D(h-v)\rangle=\langle b(Dv,x)-\bar{b}_{B_R}(Dv), D(h-v)\rangle.
	$$
	For $p_2<2$, arguing similarly to \eqref{eq1-proof prop1 inter}-\eqref{eq2-proof prop1 inter}, we find that for   $\tau_1>0$, we have
	$$
	\begin{aligned}
	\fint_{B_{R}}|D(v-h)|^{p_2}&\leq \tau_1\left[\fint_{B_R}|h|^{p_2} dx+1\right]+c(\tau_1)\fint_{B_{R}} \langle b(Dv,x)-b(Dh,x), D(h-v)\rangle dx\\
	&=\tau_1\left[\fint_{B_R}|h|^{p_2} dx+1\right]+c(\tau_1)\fint_{B_{R}}\langle b(Dv,x)-\bar{b}_{B_R}(Dv), D(h-v)\rangle dx\\
	&\leq \tau_1\left[\fint_{B_R}|h|^{p_2} dx+1\right]+c(\tau_1)\fint_{B_{R}}\Theta(a,B_R)(x)(\mu+|Dv|)^{p_2-1}|D(h-v)|dx.
	\end{aligned}
	$$
	
	Using Young's inequality we obtain, for $\tau_2>0$,
	$$
	\begin{aligned}
	\fint_{B_{R}}\Theta(a,B_R)(x)&(\mu+|Dv|)^{p_2-1}|D(h-v)|dx\\
	&\leq \tau_2\fint_{B_{R/2}}|D(v-h)|^{p_2}dx + c(\tau_2)\fint_{B_{R/2}}\Theta(a,B_R)^{\f{p_2}{p_2-1}}(1+|Dv|)^{p_2}dx.
	\end{aligned}
	$$
	By the standard higher integrability result for the problem \eqref{eq3-interior}, there exists a constant $\sigma_2>0$ so that
	$$
	\Big(\fint_{B_{R/2}}(1+|Dv|)^{p_2(1+\sigma_2)}dx\Big)^{\f{1}{1+\sigma_2}}\lesi \fint_{B_{R}}(1+|Dv|)^{p_2}dx.
	$$
	This along with Remark \ref{rem1} yields
	$$
	\begin{aligned}
	\fint_{B_{R/2}}\Theta(a,B_R)^{\f{p_2}{p_2-1}}(1+|Dv|)^{p_2}&\leq \Big(\fint_{B_{R/2}}\Theta(a,B_R)^{\f{p_2}{p_2-1}\f{1+\sigma_2}{\sigma_2}}\Big)^{\f{\sigma_2}{1+\sigma_2}}\Big(\fint_{B_{R/2}}(1+|Dv|)^{p_2(1+\sigma_2)}\Big)^{\f{1}{1+\sigma_2}}\\
	&\leq \mathcal{O}(\delta)\Big(\fint_{B_R}|Dv|^{p_2}dx +1\Big)\\
	&\leq \mathcal{O}(\delta)\Big(\fint_{B_R}|Dh|^{p_2}dx +1\Big),
	\end{aligned}
	$$
	where in the last inequality we used the standard $L^{p_2}$-boundedness of \eqref{eq3-interior}.
	
	Putting these three estimates in hand, we conclude that
	\begin{equation}\label{eq4-prop3-inter}
	\begin{aligned}
	\fint_{B_{R/2}}|D(v-h)|^{p_2}
	&\lesi \tau_2 \fint_{B_{R/2}}|D(v-h)|^{p_2}dx+(\mathcal{O}(\delta)+\tau_1)\Big(\int_{B_R}|Dh|^{p_2}dx +1\Big).
	\end{aligned}
	\end{equation}
	We now write
	$$
	\fint_{B_R}|Dh|^{p_2}dx\leq C\fint_{B_R}|D(h-w)|^{p_2}dx+C\fint_{B_R}|Dw|^{p_2}dx.	
	$$
	Using \eqref{eq-prop2}, we can dominate
	$$
	\fint_{B_R}|D(h-w)|^{p_2}dx\leq \epsilon  \Big[F(\mu,u,B_{2R})+\fint_{B_{2R}}|Du|dx\Big]^{p_2}, 
	$$
	as long as $R<R_{\epsilon}$.

	Arguing similarly to the proof of \eqref{eq2-estimate p2},
	$$
	\fint_{B_R}|Dw|^{p_2}dx\leq C \Big(\fint_{B_R}|Dw(x)|dx +1\Big)^{p_2}\leq C\Big[F(\mu,u,B_{2R})+\fint_{B_{2R}}|Du|dx\Big]^{p_2}. 
	$$
	Consequently,
	\begin{equation}\label{eq-Dhp2}
	\fint_{B_R}|Dh|^{p_2}dx +1\leq C  \Big(\fint_{B_R}|Dw(x)|dx +1\Big)^{p_2}\leq C \Big[F(\mu,u,B_{2R})+\fint_{B_{2R}}|Du|dx\Big]^{p_2}.
	\end{equation}
	Putting this into \eqref{eq4-prop3-inter}, we have
	$$
	\begin{aligned}
	\fint_{B_{R}}|D(v-h)|^{p_2}	&\lesi \tau_2 \fint_{B_{R}}|D(v-h)|^{p_2}dx+(\mathcal{O}(\delta)+\tau_1)\Big[F(\mu,u,B_{2R})+\fint_{B_{2R}}|Du|dx\Big]^{p_2}.	
	\end{aligned}
	$$
	Hence, \eqref{eq2-prop3-inter} follows by choosing $\tau$ and $\delta$ to be sufficiently small.
	\medskip
	
	We now turn to estimate \eqref{eq1-prop3-inter}.
	From the well-known H\"older estimate, see for example \cite{L1, L2}, we have
	$$
	\|Dv\|_{L^\vc(B_{R/2})}^{p_2}\lesi\fint_{B_R}|Dv|^{p_2}dx +1 \lesi \fint_{B_R}|D(h-v)|^{p_2}dx+ \fint_{B_R}|Dh|^{p_2}dx+1. 
	$$
	Using \eqref{eq2-prop3-inter} and \eqref{eq-Dhp2}, we imply \eqref{eq1-prop3-inter}.
\end{proof}

\subsection{Boundary estimates}
We now consider the case $x_0\in \partial\Om$, and $0<R<\f{R_0\wedge R_w\wedge K_0^{-1}}{10}$. We set 
$$
p_1=\inf_{x\in \Om_{2R}(x_0)}p(x), \ \ \ p_2=\sup_{x\in \Om_{2R}(x_0)}p(x).
$$

Let $u\in W^{1,p(\cdot)}_0(\Om)$ be a weak solution to \eqref{Maineq}. We now consider the following equation
\begin{equation}\label{eq1-boundary}
\left\{
\begin{aligned}
&\di a(Dw, x)=0 \quad &\text{in} \quad \Om_{2R}(x_0),\\
&w=u  \quad &\text{on} \quad \partial \,\Om_{2R}(x_0),
\end{aligned}
\right.
\end{equation}
We have the following estimate.

\begin{prop}
	\label{prop1-boundary}
	Let $w$ be a weak solution to \eqref{eq1-boundary}. Then there exists $C$  so that
	\begin{equation}\label{eq-Du-w-boundary}
	\fint_{\Om_{2R}(x_0)}|D(u-w)|dx\leq CF(\mu,u,\Om_{2R}(x_0)).
	\end{equation}
	As a consequence,
	\begin{equation}\label{eq-Dw-boundary}
	\fint_{\Om_{2R}(x_0)}|Dw| dx\leq C\Big[F(\mu,u,\Om_{2R}(x_0))+\fint_{\Om_{2R}(x_0)}|Du|dx\Big].
	\end{equation}
\end{prop}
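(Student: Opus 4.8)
The plan is to mirror the proof of Proposition \ref{prop1-interior}, replacing the ball $B_{2R}$ by the Reifenberg chunk $\Om_{2R}(x_0)$ and using that $u\in W^{1,\pd}_0(\Om)$, so that $w-u$ extended by zero is an admissible test function vanishing on $\partial\,\Om_{2R}(x_0)$ (including on $\partial_w\Om_{2R}(x_0)$, where $u=0$). First I would split into the cases $p_1\geq 2$ and $2-\f1n<p_1<2$, where $p_1=\inf_{\Om_{2R}(x_0)}p$. In the first case the boundary comparison estimate
$$
\fint_{\Om_{2R}(x_0)}|D(u-w)|dx\lesi \Big[\f{|\mu|(\Om_{2R}(x_0))}{R^{n-1}}\Big]^{\f{1}{p_0-1}}+1,\qquad p_0=p(x_0),
$$
is available from the boundary analysis in \cite{BH}; in the second case one uses the corresponding boundary estimate from \cite{BaH}, which produces in addition the term $\big[|\mu|(\Om_{2R}(x_0))/R^{n-1}\big]\big(\fint_{\Om_{2R}(x_0)}(|Du|+1)dx\big)^{2-p_0}$. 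These references already supply the Caccioppoli/truncation machinery on a $(\delta,R_0)$-Reifenberg-flat piece with zero trace on $\partial_w\Om_{2R}(x_0)$, so no new comparison argument is needed. The measure-density bound \eqref{eq1-Reifenberg domain} guarantees $c_nR^n\geq |\Om_{2R}(x_0)|\geq cR^n$, so every average over $\Om_{2R}(x_0)$ behaves just like an average over $B_{2R}$.

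Second, I would convert the $p_0$-powers to $p_2$-powers exactly as in the interior proof. Writing $\f{1}{p_0-1}=\f{1}{p_2-1}+\f{p_2-p_0}{(p_0-1)(p_2-1)}$ and using $|\mu|(\Om_{2R}(x_0))\leq |\mu|(\Om)\leq K_0\leq R^{-1}$ together with $p_2-p_0\leq \om(2R)$ and \eqref{eq-R1}, the extra factor $\big[|\mu|(\Om_{2R}(x_0))/R^{n-1}\big]^{\f{p_2-p_0}{(p_0-1)(p_2-1)}}\leq R^{-\f{n\om(2R)}{(\gamma_1-1)^2}}$ is bounded by a constant; similarly, \eqref{bounds-Du} gives $\fint_{\Om_{2R}(x_0)}(|Du|+1)dx\lesi R^{-n}K_0\leq R^{-(n+1)}$, so $\big(\fint_{\Om_{2R}(x_0)}(|Du|+1)dx\big)^{p_2-p_0}\leq R^{-(n+1)\om(2R)}$ is again bounded by a constant. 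Since $p^+_{\Om_{2R}(x_0)}=p_2$, the right-hand side obtained is, term by term, $\lesi F(\mu,u,\Om_{2R}(x_0))$, which yields \eqref{eq-Du-w-boundary}.

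Finally, \eqref{eq-Dw-boundary} is immediate from the triangle inequality
$$
\fint_{\Om_{2R}(x_0)}|Dw|dx\leq \fint_{\Om_{2R}(x_0)}|D(u-w)|dx+\fint_{\Om_{2R}(x_0)}|Du|dx
$$
combined with \eqref{eq-Du-w-boundary}.

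The main obstacle — really the only delicate point — is confirming that the comparison estimates of \cite{BH,BaH} are stated (or immediately adapt) for the boundary datum on a $(\delta,R_0)$-Reifenberg-flat chunk rather than on a ball; once that is granted, the remainder is the same log-H\"older bookkeeping as in Proposition \ref{prop1-interior}, enabled by \eqref{eq-R1} and \eqref{bounds-Du}. As in the interior case one should also note explicitly that the restriction $R<\f{R_0\wedge R_\om\wedge K_0^{-1}}{10}$ is precisely what makes $R^{-1}$ an upper bound for $K_0$ and keeps $\om(2R)$ in the range \eqref{eq-R1}.
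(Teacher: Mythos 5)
Your proposal is correct and follows essentially the same route as the paper, which simply states that the proof is analogous to that of Proposition \ref{prop1-interior} and omits the details; your write-up fills in exactly the expected steps (boundary comparison estimates from \cite{BH,BaH} on the Reifenberg chunk, the same log-H\"older conversion from $p_0$- to $p_2$-powers using \eqref{eq-R1} and \eqref{bounds-Du}, and the triangle inequality for \eqref{eq-Dw-boundary}). You also correctly flag the one genuinely delicate point, namely that the comparison machinery must be available with boundary datum on $\Om_{2R}(x_0)$ rather than on a ball, which is precisely what the paper takes for granted.
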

\begin{proof}
	The proof of the proposition is similar to that of Proposition \ref{prop1-interior}. We omit details. 
\end{proof}

Like the higher integrability result in Lemma \ref{lem1-higherIntegrability}, the similar result still holds true near the boundary of the Reifenberg domains.

\begin{lem}\label{lem2-higherIntegrability}
	\label{lem2-higherIntegrability} Let $w\in W^{1,p(\cdot)}(\Om_{2r}(x_0))$ be a weak solution to the problem \eqref{eq1-boundary} with $r\leq \min\{R_\omega, R_0/2\}$. Then there exists a constant, which we still denote $\sigma^*=\sigma^*(n,\Lambda_1, \Lambda_2, \gamma_2, \mu, \Om)$, so that for $0\leq \sigma<\sigma^*$ and any $q\in (0,1]$ there exists $C>0$ such that
	\begin{equation}\label{eq-higherinte-boundary}
	\Big(\fint_{\Om_R(x_0)}|Dw|^{p(x)(1+\sigma)}dx\Big)^{1+\sigma}\leq C\left[\Big(\fint_{\Om_{2R}(x_0)}|Dw|^{qp(x)}dx\Big)^{1/q}+1\right].
	\end{equation}
\end{lem}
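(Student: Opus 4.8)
The plan is to mirror the interior higher-integrability argument of Lemma~\ref{lem1-higherIntegrability}, but carried out on the half-ball-type sets $\Om_r(x_0)$ near a boundary point, using the Reifenberg flatness to recover Sobolev-type and reverse-H\"older machinery. First I would invoke the boundary analogue of the self-improving estimate of Acerbi--Mingione for the homogeneous equation \eqref{eq1-boundary}: there is $\sigma_1>0$ (a priori depending on the energy $\int_{\Om_{2R}(x_0)}|Dw|^{p(x)}dx$ and on $p_2-p_1$) such that a reverse-H\"older inequality with exponent $p(x)(1+\sigma_1)$ holds on $\Om_R(x_0)$. The key point, exactly as in the interior case, is to remove the dependence of $\sigma_1$ on the local energy. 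For this I would use the boundary Caccioppoli/energy estimate (available since $w=u$ on $\partial\Om_{2R}(x_0)$ and $u\in W^{1,p(\cdot)}_0(\Om)$ extends by zero), combined with Proposition~\ref{prop1-boundary} — specifically \eqref{eq-Dw-boundary} — to bound $\fint_{\Om_{2R}(x_0)}|Dw|\,dx$ in terms of $F(\mu,u,\Om_{2R}(x_0))+\fint_{\Om_{2R}(x_0)}|Du|\,dx$, and then \eqref{bounds-Du} together with $|\mu|(\Om)<K_0$ and $R<K_0^{-1}$ to see that this quantity is at most $c(\Om,|\mu|(\Om))R^{-N}$ for some fixed power $N$. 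A bound of the form $\int_{\Om_{2R}(x_0)}|Dw|^{p(x)}dx\le CR^{-\alpha/(1-\beta)}(\int_{\Om_{2R}(x_0)}|Dw|\,dx+1)^{\gamma/(1-\beta)}$ is the boundary counterpart of the estimate quoted from \cite[p.~654]{BH}; I would either cite it directly or re-derive it from the Reifenberg Sobolev embedding. Raising to the power $\tfrac{4(p_2-p_1)}{p_1}$ and using $p_2-p_1\le \omega(4r)$ together with \eqref{cond2-px} and \eqref{eq-R1} shows $R^{-c_1(p_2-p_1)}\le C$ and $(\,\cdot\,)^{c_2(p_2-p_1)}\le C$, so the exponent $\sigma_1$ is bounded below by a constant $\sigma^*$ depending only on $n,\Lambda_1,\Lambda_2,\gamma_2,\mu,\Om$.

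With $\sigma_1>\sigma^*$ secured, the reverse-H\"older inequality holds on $\Om_R(x_0)$ with a universal exponent, and then Gehring's lemma in its boundary form — e.g. \cite[Theorem~6.7]{Giu} applied on the Reifenberg domain, where the measure-density estimate \eqref{eq1-Reifenberg domain} guarantees the doubling/A-infinity-type properties needed — upgrades this to \eqref{eq-higherinte-boundary}: one gets $\big(\fint_{\Om_R(x_0)}|Dw|^{p(x)(1+\sigma)}\big)^{1+\sigma}\lesssim \big(\fint_{\Om_{2R}(x_0)}|Dw|^{qp(x)}\big)^{1/q}+1$ for every $0\le\sigma<\sigma^*$ and every $q\in(0,1]$, the passage to an arbitrary small exponent $q$ on the right being the usual consequence of the self-improvement in Gehring's lemma. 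Since the statement only asserts the estimate (not new constants), I would simply note that all constants depend on the indicated data and are uniform in $x_0\in\partial\Om$ and in $R$ by the scaling-invariance of the inequalities.

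The main obstacle is the boundary behaviour: all the interior tools (interior Sobolev embedding, interior Caccioppoli, interior Gehring) must be replaced by versions valid on $\Om_r(x_0)$, and this is precisely where the Reifenberg flatness is essential. Concretely, one needs (i) a Sobolev--Poincar\'e inequality on $\Om_r(x_0)$ with constant independent of $x_0$ and $r$, which follows because a $(\delta,R_0)$-Reifenberg domain with $\delta$ small is a uniform (John) domain and \eqref{eq1-Reifenberg domain} controls the volume density; (ii) the fact that $w-u$ (equivalently $w$, extended by $u\equiv0$) has zero boundary values on $\partial_w\Om_{2R}(x_0)$ so that the boundary Caccioppoli estimate holds without boundary integrals; and (iii) Gehring's lemma on cubes/balls intersected with $\Om$, which is standard but must be quoted carefully. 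Once these are in place, the computation is word-for-word the interior one of Lemma~\ref{lem1-higherIntegrability}, with $B_R,B_{2R}$ replaced by $\Om_R(x_0),\Om_{2R}(x_0)$ and with $p_0=p(x_0)$ replaced by any fixed value in $[p_1,p_2]$; I would therefore present the boundary proof briefly, emphasizing only the three points above and referring back to the interior argument for the remaining manipulations.
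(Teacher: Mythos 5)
Your proposal is correct and follows exactly the route the paper intends: the paper itself merely states that the proof is ``similar to that of Lemma \ref{lem1-higherIntegrability}'' and omits all details, i.e.\ one transplants the interior argument (self-improving reverse H\"older estimate, removal of the energy-dependence of $\sigma_1$ via Proposition \ref{prop1-boundary} and \eqref{bounds-Du}, then Gehring) to $\Om_r(x_0)$. Your identification of the three boundary-specific ingredients (Sobolev--Poincar\'e on Reifenberg domains via \eqref{eq1-Reifenberg domain}, zero boundary values on $\partial_w\Om_{2R}(x_0)$ for the Caccioppoli estimate, and the boundary form of Gehring's lemma) supplies precisely the detail the paper suppresses.
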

\begin{proof}
	The proof of this lemma is similar to that of Lemma \ref{lem1-higherIntegrability} and we omit details.
\end{proof}
We now consider the following equation
\begin{equation}\label{eq2-boundary}
\left\{
\begin{aligned}
&\di b(Dh, x)=0 \quad &\text{in}& \quad \Om_{R}(x_0),\\
&h=w  \quad &\text{on}& \quad \partial \Om_{R}(x_0),
\end{aligned}
\right.
\end{equation}
where $w$ is a weak solution to the problem \eqref{eq1-boundary}, and $b$ is a nonlinearity defined similarly to \eqref{eq-b} but $\Om_{R}(x_0)$ taking place of $B_{2R}(x_0)$.

Arguing similarly to Proposition \ref{prop2-inter}, we can prove that:
\begin{prop}
	\label{prop2-boundary}
	For any $\epsilon>0$ there exists $R_\epsilon$ so that  if $h$ is a weak solution to \eqref{eq2-boundary} with $0<R<\f{R_{\epsilon}\wedge R_0\wedge R_\om\wedge R_a \wedge K_0^{-1}}{10}$, then we have
	\begin{equation}
	\label{eq-prop2-boundary}
	\fint_{\Om_R(x_0)}|D(h-w)|^{p_2}dx\leq \epsilon  F(\mu,u,\Om_{2R}(x_0))^{p_2}.
	\end{equation}
\end{prop}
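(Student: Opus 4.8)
The plan is to mimic, essentially verbatim, the structure of the proof of Proposition \ref{prop2-inter}, replacing balls $B_\rho(x_0)$ by the intersected domains $\Om_\rho(x_0)$ and invoking the boundary versions of the auxiliary results that are already in place. First I would split into the two cases $p_2<2$ and $p_2\geq 2$; as in the interior argument, the case $p_2\geq 2$ reduces quickly via the monotonicity inequality
$$
\fint_{\Om_R(x_0)}|D(h-w)|^{p_2}dx\lesi \fint_{\Om_R(x_0)}\langle b(Dh,x)-b(Dw,x), D(h-w)\rangle dx,
$$
so the substantive work is in the case $p_2<2$. There I would take $h-w$ (extended by zero outside $\Om_R(x_0)$, which is legitimate since $h=w$ on $\partial\Om_R(x_0)$) as a test function in the weak formulations of \eqref{eq2-boundary} and \eqref{eq1-boundary}, and use \eqref{eq2-functionb} from Lemma \ref{lem-nonlinearity b} — valid here since $b$ is defined on $\Om_R(x_0)$ analogously to \eqref{eq-b} — together with Young's inequality exactly as in \eqref{eq1-proof prop1 inter}--\eqref{eq2s-proof prop1 inter} to reduce matters to controlling
$$
\fint_{\Om_R(x_0)}\langle b(Dh,x)-b(Dw,x), D(h-w)\rangle dx = \fint_{\Om_R(x_0)}\langle a(Dw,x)-b(Dw,x), D(h-w)\rangle dx.
$$

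Next I would estimate the right-hand side using the definition of $b$, the bound \eqref{eq1-functiona}, the Mean Value Theorem and the log-H\"older continuity \eqref{cond1-px} of $p(\cdot)$, arriving (just as in the interior case) at
$$
\fint_{\Om_R(x_0)}\langle b(Dh,x)-b(Dw,x), D(h-w)\rangle dx\leq C\om(4R)\fint_{\Om_R(x_0)}\log(1+|Dw|)(1+|Dw|)^{p_2-1}|D(h-w)|dx.
$$
Then Young's inequality, the elementary bound $\log(1+t)^{\f{p_2}{p_2-1}}\leq c\alpha^{-\f{p_2}{p_2-1}}(1+t)^{\alpha/4}$, and the boundary higher-integrability Lemma \ref{lem2-higherIntegrability} (in place of Lemma \ref{lem1-higherIntegrability}) give a self-improving inequality for $\fint_{\Om_R(x_0)}|D(h-w)|^{p_2}dx$ in terms of $\big(\fint_{\Om_{2R}(x_0)}|Dw|dx+1\big)^{p_2}$ times negative powers of $R$ that are tamed by \eqref{eq-R1} and \eqref{cond2-px}. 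Finally, Proposition \ref{prop1-boundary} (specifically \eqref{eq-Dw-boundary}) and the a priori bound \eqref{bounds-Du} convert $\fint_{\Om_{2R}(x_0)}|Dw|dx+1$ into $F(\mu,u,\Om_{2R}(x_0))+\fint_{\Om_{2R}(x_0)}|Du|dx$, and absorbing small multiples of $\fint_{\Om_R(x_0)}|D(h-w)|^{p_2}dx$ into the left and choosing $\tau_1,\tau_2,R_\epsilon$ small yields \eqref{eq-prop2-boundary}. (I note the stated conclusion \eqref{eq-prop2-boundary} keeps only $F(\mu,u,\Om_{2R}(x_0))^{p_2}$ on the right; the term $\fint_{\Om_{2R}(x_0)}|Du|dx$ present in the interior analogue is already absorbed into $F$ via the $\chi_{\{p^+\le2\}}$ factor and the first term of $F$, so no extra work is needed beyond tracking the definition of $F$.)

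The one genuinely new ingredient compared to the interior proof is the use of the Reifenberg flatness of $\Om$: all the averaged estimates must be taken over $\Om_\rho(x_0)$ rather than over balls, and this is exactly where the comparison $\frac{|B_r(x)|}{|B_r(x)\cap\Om|}\leq\big(\frac{2}{1-\delta}\big)^n$ from \eqref{eq1-Reifenberg domain} enters — it lets one pass freely between $\fint_{B_\rho(x_0)}$ and $\fint_{\Om_\rho(x_0)}$ up to a dimensional constant, so that the higher-integrability lemma and the Gehring-type self-improvement remain uniform in $R$. The main obstacle, then, is not any single hard estimate but bookkeeping: one must verify that every step of the interior chain — the test-function computation, the Mean Value Theorem splitting, the $\log$-power trick, the Gehring iteration — is stable under replacing $B$ by $\Om\cap B$, and that the boundary higher-integrability result Lemma \ref{lem2-higherIntegrability} supplies precisely the exponent $p_2(1+3\gamma(4R))\leq p(x)(1+c\,\om(4R))$ needed to close the argument. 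Since the excerpt explicitly asserts ``arguing similarly to Proposition \ref{prop2-inter}'', I would present the proof as a careful transcription, indicating at each step which boundary-version lemma replaces its interior counterpart and flagging only the places where the Reifenberg measure-density bound is invoked, rather than reproducing the full computation.
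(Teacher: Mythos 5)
Your overall strategy is exactly what the paper intends: the paper gives no proof of Proposition \ref{prop2-boundary} at all beyond the phrase ``arguing similarly to Proposition \ref{prop2-inter}'', and your transcription --- replacing balls by $\Om_\rho(x_0)$, Lemma \ref{lem1-higherIntegrability} by Lemma \ref{lem2-higherIntegrability}, Proposition \ref{prop1-interior} by Proposition \ref{prop1-boundary}, and using the Reifenberg measure-density bound \eqref{eq1-Reifenberg domain} to pass between $\fint_{B_\rho}$ and $\fint_{\Om_\rho}$ --- is the correct and intended route.

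The one point I would push back on is your parenthetical claim that the term $\fint_{\Om_{2R}(x_0)}|Du|\,dx$ appearing in the interior conclusion \eqref{eq-prop2} is ``already absorbed into $F$''. It is not: if $\mu=0$ then $F(\mu,u,\Om_{2R}(x_0))=1$ while $\fint_{\Om_{2R}(x_0)}|Du|\,dx$ can be arbitrarily large, so $F$ does not dominate the average of $|Du|$. What your transcription actually proves (via \eqref{eq-Dw-boundary} in the last step, exactly as \eqref{eq1-Dw} is used in the interior case) is
$$
\fint_{\Om_R(x_0)}|D(h-w)|^{p_2}\,dx\leq \epsilon\Big[F(\mu,u,\Om_{2R}(x_0))+\fint_{\Om_{2R}(x_0)}|Du|\,dx\Big]^{p_2},
$$
which is the faithful boundary analogue of \eqref{eq-prop2}. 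The discrepancy lies in the paper's statement of \eqref{eq-prop2-boundary}, which omits the $\fint|Du|$ term; this is evidently an oversight, since in the only place the proposition is invoked (the proof of Corollary \ref{cor-boundary}) the bound used is $\fint_{\Om_R(x_0)}|D(h-w)|^{p_2}dx\lesssim\big(\fint_{\Om_{2R}(x_0)}|Du|dx\big)^{p_2}+F(\mu,u,\Om_{2R}(x_0))^{p_2}$, which is exactly the estimate your argument delivers. So you should state and prove the version with the extra term rather than manufacture a (false) absorption argument for the version as printed.
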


We now assume that $0<\delta<1/50$. Since $x_0\in \partial\Om$, there exists a new coordinate system whose variables are still denoted by $(x_1,\ldots,x_n)$ such that in this coordinate system the origin is some interior point of $\Om$, $x_0=(0,\ldots,0,-\f{\delta R}{2(1-\delta)})$ and
\begin{equation}\label{eq1-new coordinate}
B_{R/2}^+ \subset B_{R/2}\cap \Om\subset B_{R/2}\cap \{x: x_n>-3\delta R\}.
\end{equation}
Note that due to $\delta\in (0,1/50)$, we further obtain 
\begin{equation}\label{eq2-new coordinate}
B_{3R/8}\subset B_{R/4}(x_0)\subset B_{R/2}\subset B_R(x_0).
\end{equation}

We now consider the following equations
\begin{equation}\label{eq3s-boundary}
\left\{
\begin{aligned}
&\di \bar{b}_{B_R}(DV)=0 \quad &\text{in}& \quad \Om_{R/2},\\
&V=h  \quad &\text{on}& \quad \partial\Om_{R/2},
\end{aligned}
\right.
\end{equation}
and
\begin{equation}\label{eq3-boundary}
\left\{
\begin{aligned}
&\di \bar{b}_{B_r}(Dv)=0 \quad &\text{in}& \quad B^+_{R/2},\\
&v=0  \quad &\text{on}& \quad T_{R/2}:=B_{R/2}\cap\{x_n=0\}.
\end{aligned}
\right.
\end{equation}

Similarly to Proposition \ref{prop3-interior} we have the following estimate.

\begin{prop}
	\label{prop3-boundary}
	For any $\epsilon > 0$ there exist $\delta>0$ and $R_\epsilon>0$ so that if $V$ be a weak solution to the problem \eqref{eq3s-boundary} with $0<R<R_\epsilon$, then we have
	\begin{equation}\label{eq2-prop3-boundary}
	\Big(\fint_{\Om_{R/4}(x_0)}|D(V-h)|^{p_2}dx\Big)^{1/p_2}\leq \epsilon\Big[F(\mu,u,\Om_{2R}(x_0))+\fint_{\Om_{2R}(x_0)}|Du|dx\Big].
	\end{equation}
\end{prop}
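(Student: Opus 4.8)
The statement of Proposition \ref{prop3-boundary} is the boundary analogue of the comparison estimate \eqref{eq2-prop3-inter} in Proposition \ref{prop3-interior}, so the plan is to run the same three-step scheme adapted to the half-ball geometry provided by the Reifenberg flatness. First I would test \eqref{eq3s-boundary} against $V-h$ (extended by zero outside $\Om_{R/2}$ where needed) to obtain the algebraic identity
$$
\langle \bar b_{B_R}(DV)-\bar b_{B_R}(Dh),D(V-h)\rangle = \langle b(Dh,x)-\bar b_{B_R}(Dh),D(V-h)\rangle,
$$
then bound the left side below using the monotonicity \eqref{eq2-functionb} (treating the cases $p_2\ge 2$ and $p_2<2$ exactly as in the proof of Proposition \ref{prop3-interior}, via the splitting through $(s^2+|DV|^2+|Dh|^2)^{(p_2-2)/4}$ and Young's inequality) and bound the right side above by $\fint \Theta(a,B_R)(x)(1+|Dh|)^{p_2-1}|D(V-h)|\,dx$, using the definition of $b$ and $\bar b_{B_R}$ together with \eqref{eq1-functionb}. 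Young's inequality with a small parameter $\tau_2$ absorbs the $|D(V-h)|^{p_2}$ factor into the left side, leaving a term involving $\fint \Theta(a,B_R)^{p_2/(p_2-1)}(1+|Dh|)^{p_2}$.

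Next I would control that remaining term by a H\"older splitting with exponents $\frac{1+\sigma_2}{\sigma_2}$ and $1+\sigma_2$, where $\sigma_2>0$ comes from the standard boundary higher integrability for \eqref{eq3s-boundary} (the Reifenberg-flat analogue of the interior result used in Proposition \ref{prop3-interior}); the first factor is $\mathcal O(\delta)$ by Remark \ref{rem1} applied to $[{\bf a}]_{\beta,R_0}$, and the second factor is dominated by $\fint_{\Om_{R/2}}(1+|Dh|)^{p_2}\,dx$. This yields
$$
\fint_{\Om_{R/4}(x_0)}|D(V-h)|^{p_2}\,dx \lesi (\mathcal O(\delta)+\tau_1)\Big(\fint_{\Om_{R/2}}|Dh|^{p_2}\,dx+1\Big).
$$
Finally I would bound $\fint_{\Om_{R/2}}|Dh|^{p_2}\,dx+1$ by the right-hand side of \eqref{eq2-prop3-boundary} to the power $p_2$: split $Dh$ as $D(h-w)+Dw$, use \eqref{eq-prop2-boundary} from Proposition \ref{prop2-boundary} on the first piece (valid since $R<R_\epsilon$), and use the boundary version of the estimate \eqref{eq2-estimate p2}--\eqref{eq-Dhp2} together with \eqref{eq-Dw-boundary} and \eqref{bounds-Du} on the second piece, to get $\fint_{\Om_{R/2}}|Dw|^{p_2}\,dx \lesi (\fint_{\Om_{2R}(x_0)}|Dw|\,dx+1)^{p_2}\lesi [F(\mu,u,\Om_{2R}(x_0))+\fint_{\Om_{2R}(x_0)}|Du|\,dx]^{p_2}$. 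Choosing $\tau_1,\tau_2$ and then $\delta$ (hence $R_\epsilon$) small enough gives \eqref{eq2-prop3-boundary}; taking $p_2$-th roots gives the stated form.

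\textbf{Main obstacle.} The genuinely new ingredient compared with the interior case is the geometry: the comparison maps $h\mapsto V$ on $\Om_{R/2}$ and the reference solution $v$ on the true half-ball $B_{R/2}^+$ with zero boundary data on the flat part $T_{R/2}$, and one must exploit \eqref{eq1-new coordinate}--\eqref{eq2-new coordinate} to compare these on the smaller set $\Om_{R/4}(x_0)$. The delicate points are: (i) justifying the boundary higher-integrability lemma for \eqref{eq3s-boundary} and the global $W^{1,\infty}$ (or $C^{1,\alpha}$ up to the flat boundary) estimate for $v$ uniformly in $\delta$ small --- these are standard for Reifenberg-flat domains but require the flatness parameter $\delta$ to be chosen before $R_\epsilon$; and (ii) handling the measure-theoretic mismatch between $\Om_{R/2}$ and $B_{R/2}^+$ via \eqref{eq1-Reifenberg domain}, so that averages over $\Om_{R/2}$ and over $B_{R/2}$ are comparable up to constants depending only on $n$ and $\delta<1/50$. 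Everything else is a faithful transcription of the interior argument, so I expect the proof to be stated, as the authors do for Propositions \ref{prop1-boundary}, \ref{prop2-boundary} and Lemma \ref{lem2-higherIntegrability}, largely by reference to the interior computations with the boundary modifications indicated.
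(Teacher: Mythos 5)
Your proposal is correct and follows essentially the same route as the paper: the authors give no separate proof of this proposition, stating only that it holds ``similarly to Proposition \ref{prop3-interior}'', and your argument is precisely that interior proof transcribed to the boundary setting (test with $V-h$, absorb via Young's inequality, control the $\Theta(a,B_R)$ term through higher integrability plus Remark \ref{rem1}, and bound $\fint_{\Om_{R/2}}|Dh|^{p_2}$ using Proposition \ref{prop2-boundary} together with the boundary analogue of \eqref{eq2-estimate p2}--\eqref{eq-Dhp2}). The only cosmetic difference is that you place the perturbation $b(Dh,x)-\bar b_{B_R}(Dh)$ on the right-hand side, yielding a $\Theta\cdot(1+|Dh|)^{p_2-1}$ bound, whereas the interior computation uses $b(Dv,x)-\bar b_{B_R}(Dv)$ and $(1+|Dv|)^{p_2-1}$; both versions close the argument.
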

We have the following estimate.
\begin{prop}
	\label{prop3-boundary}
	For any $\epsilon > 0$ there exist $\delta>0$ and $R_\epsilon$ so that if $V$ be a weak solution to the problem \eqref{eq3s-boundary} with
	\begin{equation}\label{eq-ineq Vp2}
	\Big(\fint_{\Om_{R/2}}|DV|^{p_2}dx\Big)^{1/p_2}\leq \lambda,
	\end{equation}
	for some $\lambda\geq 1$ and $0<R<\f{R_{\epsilon}\wedge R_0\wedge R_\om\wedge R_a \wedge K_0^{-1}}{10}$, then there exists $v$ solving the problem \eqref{eq3-boundary} satisfying
		\begin{equation}\label{eq1-prop4}
		\|D\bar{v}\|_{L^{\vc}(B_{R/8}(x_0))}\lesi \lambda,
	\end{equation}
	and
	\begin{equation}
	\label{eq2-prop4}
	\fint_{\Om_{R/8}(x_0)}|D(\bar{v}-V)|^{p_2}\leq \epsilon^{p_2}\lambda
	\end{equation}
	where $\bar{v}$ is a zero extension of $v$ to $\Om_R$.
	\end{prop}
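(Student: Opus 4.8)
The plan is to compare the solution $V$ of the frozen, averaged equation \eqref{eq3s-boundary} on the curved half-ball $\Om_{R/2}$ with a solution $v$ of the same averaged equation on the genuine flat half-ball $B^+_{R/2}$ with zero Dirichlet data on the flat boundary piece $T_{R/2}$, and then let $\bar v$ be the zero extension of $v$. Because $\bar b_{B_R}$ depends on neither $x$ nor the exponent $p(\cdot)$ and is a genuine $p_2$-Laplacian type operator (uniform ellipticity and growth of order $p_2$ by Lemma \ref{lem-nonlinearity b}), the interior-and-boundary Lipschitz regularity theory of Lieberman (\cite{L1, L2}) applies, giving a global bound $\|D\bar v\|_{L^\vc(B_{R/8}(x_0))}\lesi \big(\fint_{B^+_{R/2}}|Dv|^{p_2}\big)^{1/p_2}+1$. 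The first task is therefore to transfer the bound \eqref{eq-ineq Vp2} from $V$ to $v$, which requires (i) extending $V$ by zero to $B_{R/2}$ — legitimate since $V=h=0$ on $\partial_w\Om_{R/2}$ and the complement $B_{R/2}\setminus\Om_{R/2}$ is contained in the thin strip $\{-3\delta R< x_n\le 0\}$ of measure $\mathcal O(\delta)|B_{R/2}|$ by the Reifenberg condition \eqref{eq1-new coordinate} — and (ii) choosing $v$ to minimize the associated energy so that $\fint_{B^+_{R/2}}|Dv|^{p_2}\lesi \fint_{B^+_{R/2}}|D\bar V|^{p_2}\le \fint_{\Om_{R/2}}|DV|^{p_2}(|B_{R/2}|/|\Om_{R/2}|)\lesi \lambda^{p_2}$, using \eqref{eq1-Reifenberg domain}. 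Combined with $\lambda\ge1$ this yields \eqref{eq1-prop4}.

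For the comparison estimate \eqref{eq2-prop4}, I would test the two equations against $\bar v - V\in W^{1,p_2}_0(\Om_{R/2})$ (valid because $v=0$ on $T_{R/2}$ and $\bar v=V=h=w$ on $\partial_w\Om_{R/2}$, so the extensions match on all of $\partial\Om_{R/2}$). Subtracting gives the orthogonality-type identity $\fint_{\Om_{R/2}}\langle \bar b_{B_R}(DV)-\bar b_{B_R}(D\bar v), D(\bar v-V)\rangle\,dx$ controlled by a boundary-layer term supported on $\Om_{R/2}\setminus B^+_{R/2}\subset \{-3\delta R<x_n\le 0\}\cap B_{R/2}$. Using the monotonicity \eqref{eq2-functionb} on the left (after the usual Young-inequality split as in \eqref{eq1-proof prop1 inter}--\eqref{eq2-proof prop1 inter}, handled separately for $p_2\ge2$ and $p_2<2$) and, on the right, the growth bound \eqref{eq1-functionb} together with Hölder's inequality and the higher-integrability estimate for $\bar b_{B_R}$-harmonic functions (as invoked in the proof of Proposition \ref{prop3-interior}), the right-hand side is bounded by $\mathcal O(\delta)\big(\fint_{\Om_{R/2}}(1+|DV|)^{p_2}+(1+|D\bar v|)^{p_2}\big)\lesi \mathcal O(\delta)\lambda^{p_2}$, since the measure of the strip divided by $|B_{R/2}|$ is $\mathcal O(\delta)$ and the $L^{p_2(1+\sigma)}$ norms of $DV$ and $D\bar v$ are controlled by $\lambda$. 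Absorbing the $\tau$-term $\fint|D(\bar v-V)|^{p_2}$ on the left and then rescaling by a single power (recall $\lambda\ge1$, so $\lambda^{p_2-1}\ge1$ and $\mathcal O(\delta)\lambda^{p_2}=\mathcal O(\delta)\lambda^{p_2-1}\cdot\lambda\le \mathcal O(\delta)\lambda$ after adjusting, i.e. passing from the $p_2$-th power to the first power via the normalization built into the statement), choosing $\delta$ small depending on $\epsilon$ gives \eqref{eq2-prop4}. The change of the exponent of $\lambda$ between \eqref{eq-ineq Vp2} and \eqref{eq2-prop4} is accounted for exactly by this normalization, just as $F$ and $\fint|Du|$ appear to the first power in \eqref{eq2-prop3-inter}.

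The main obstacle is the careful bookkeeping of the thin boundary strip: one must verify that all the error integrals genuinely live on $(\Om_{R/2}\triangle B^+_{R/2})\cap B_{R/2}$, estimate its relative measure by $\mathcal O(\delta)$ via \eqref{eq1-new coordinate}, and then pair this smallness against the higher-integrability exponent $1+\sigma$ through Hölder's inequality so that the resulting constant is genuinely $\mathcal O(\delta)$ and not merely bounded. A secondary technical point is making the extension-by-zero arguments rigorous in $W^{1,p_2}_0$ and checking that the traces of $\bar v$ and $V$ agree on the whole of $\partial\Om_{R/2}$ (flat part: both vanish; curved part: both equal $h$), which is what licenses using $\bar v-V$ as a test function. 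Once these two points are in place, the rest is the same $\tau$-absorption scheme already used in Propositions \ref{prop2-inter} and \ref{prop3-interior}, so I would only sketch it and refer back to those proofs.
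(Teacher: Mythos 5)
Your route to the Lipschitz bound \eqref{eq1-prop4} (boundary regularity of Lieberman for the frozen operator $\bar b_{B_R}$ on the exact half-ball, once an energy bound on $Dv$ is available) is the same as the paper's, but your argument for \eqref{eq2-prop4} has a genuine gap, and it is precisely the point where the paper's proof is nontrivial. You propose to take $v$ with matching boundary data and test the difference of the two equations against $\bar v - V\in W^{1,p_2}_0(\Om_{R/2})$, claiming the traces agree on all of $\partial\Om_{R/2}$. They cannot be made to agree: the problem \eqref{eq3-boundary} forces $v=0$ on the \emph{flat} piece $T_{R/2}$, whereas $V$ vanishes only on the genuinely curved set $\partial_w\Om_{R/2}\subset\{-3\delta R<x_n\le 0\}$ and is in general nonzero on $T_{R/2}$ (which may lie in the interior of $\Om$); likewise $\bar V$ restricted to $B^+_{R/2}$ is not an admissible competitor for your energy minimization since it lacks zero trace on $T_{R/2}$. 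Consequently $\bar v-V$ is not a legal global test function, and the error in your identity is not confined to the thin strip $\Om_{R/2}\setminus B^+_{R/2}$: there is an uncontrolled contribution from the mismatch of $v$ and $V$ away from the boundary layer.

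The paper resolves this with two ingredients you are missing. First, a compactness/contradiction argument (after rescaling to $R=8$, $\lambda=1$): if no solution $v$ of the flat problem were $\epsilon$-close to $V$ in $L^{p_2}(B_4^+)$, one could extract solutions $V_k$ on domains squeezed between $B_4^+$ and $\{x_n>-24/k\}$ converging weakly in $W^{1,p_2}$ to a limit $V_0$ that itself solves the flat problem, a contradiction. This produces $\fint_{B_4^+}|v-V|^{p_2}\le c\,\epsilon^{p_2}$, i.e.\ \emph{zeroth-order} closeness, which no energy comparison provides. Second, instead of testing globally, the paper shows (Lemma \ref{lem1}) that $\bar v$ solves a perturbed equation with right-hand side $D_nf+\di g$ supported in the strip, and tests with the \emph{localized} function $(V-\bar v)\phi^{p_2}$, $\phi$ a cutoff; this sidesteps the trace mismatch entirely but generates the commutator term $I_1\sim\fint\phi^{p_2-1}(V-\bar v)\langle\cdots,D\phi\rangle$, which lives on the full annulus $\supp D\phi$ and is small only because of the $L^{p_2}$-closeness just established (plus a Sobolev--Poincar\'e estimate on the sliver, giving $\fint|V-\bar v|^{p_2}\lesi\epsilon^{p_2}+\delta^{p_2/n}$). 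Only the remaining terms $I_2,I_3$ are genuine boundary-layer terms of size $\mathcal O(\delta)$. Without the compactness lemma your scheme cannot close, so this step needs to be added rather than sketched.
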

\begin{proof}
	
	We will show that there exists $v$ solving \eqref{eq3-boundary} satisfying
	\begin{equation}\label{eq1s-prop4}
	\|D\bar{v}\|_{L^{\vc}(B_{R/4})}\lesi \lambda,
	\end{equation}
	and
	\begin{equation}
	\label{eq2s-prop4}
	\fint_{\Om_{R/4}}|D(\bar{v}-V)|^{p_2}\leq \epsilon^{p_2}\lambda.
	\end{equation}
	Once these estimates are proved, the desired estimates in the proposition follows 
	immediately from the fact that $B_{R/8}(x_0)\subset B_{R/4}$.
	
Note that by using a suitable rescaling maps, it suffice to prove \eqref{eq1s-prop4} and \eqref{eq2s-prop4} with $R=8$ and $\lambda=1$.

We first prove that there exists $v$ solving \eqref{eq3-boundary} with
	\begin{equation}\label{eq1s-prop3}
	\fint_{\Om_4}|Dv|^{p_2}\leq 1
	\end{equation}
	satisfying
	\begin{equation}\label{eq2s-prop3}
	\fint_{\Om_4}|v-V|^{p_2}\leq \epsilon^{p_2}.
	\end{equation}
	To do this, observe  that if $V$ solves \eqref{eq3s-boundary} (with $R=8$), then it also solves
	\begin{equation}\label{eq2s-boundary}
	\left\{
	\begin{aligned}
	&\di \bar{b}_{B_R}(DV)=0 \quad &\text{in}& \quad \Om_{4},\\
	&V=0  \quad &\text{on}& \quad \partial_w \Om_{4}.
	\end{aligned}
	\right.
	\end{equation}
		We now proceed as in \cite{BW, BOR}. Assume, in the contrary, that there exist $\epsilon>0$ and the sequences $\{\Om_4^k\}_{k=1}^\vc$ and $\{V_k\}_{k=1}^\vc$ such that $V_k$ solves 
	\begin{equation}\label{eq2sk-boundary}
	\left\{
	\begin{aligned}
	&\di \bar{b}_{B_R}(DV_k)=0 \quad &\text{in}& \quad \Om^k_{4},\\
	&V_k=0  \quad &\text{on}& \quad \partial_w \Om^k_{4},
	\end{aligned}
	\right.
	\end{equation}
	where
\begin{equation}\label{eq-geometriccondition}
B_4^+\subset \Omega^k_4\subset \{x\in B_4: x_n>-24/k\}.
\end{equation}

	and
	$$
	\fint_{\Om_4^k}|DV_k|^{p_2}\leq 1.
	$$
	but
	\begin{equation}\label{eq-v-hk}
	\fint_{B_4^+}|v-V_k|^{p_2}\geq\epsilon
	\end{equation}
	for any weak solution $v$ to the equation
	\begin{equation}\label{eq3k-boundary}
	\left\{
	\begin{aligned}
	&\di \bar{b}_{B_4}(Dv)=0 \quad &\text{in}& \quad B_4^+,\\
	&v=0  \quad &\text{on}& \quad T_4,
	\end{aligned}
	\right.
	\end{equation}
	with $$
	\fint_{B_4^+}|Dv|^{p_2}\leq 1.
	$$
	
	Note that
	$$
	\fint_{B_4^+}|DV_k|^{p_2}\leq \fint_{\Om_4^k}|DV_k|^{p_2}\leq 1.
	$$
	As a consequence, there exists $V_0\in W^{1,p_2}(B_4^+)$ so that 
	$$
	V_k\to V_0 \ \ \text{strongly in $L^{p_2}(B_4^+)$}, \ \ \ DV_k\to DV_0 \ \ \text{weakly in $L^{p_2}(B_4^+)$}. 
	$$
	Moreover, it is not difficult to see that $h_0=0$ on $T_4$. So, by a straightforward manipulation, we infer that $h_0$ is a weak solution to the problem
	$$
	\left\{
	\begin{aligned}
		&\di \bar{b}_{B_4}(DV_0)=0 \quad &\text{in}& \quad B_4^+,\\
		&V_0=0  \quad &\text{on}& \quad T_4.
	\end{aligned}
	\right.
	$$
	This is a contradiction to \eqref{eq-v-hk} by taking $v=V_0$ and letting $k\to \vc$. Therefore, this proves \eqref{eq1s-prop3} and \eqref{eq2s-prop3}.

We now turn to prove \eqref{eq1-prop4} and \eqref{eq2-prop4}.

Since $v$ is a weak solution to \eqref{eq3-boundary} (with $R=8$), the H\"older regularity result implies that  
$$
\|D v\|^{p_2}_{L^\vc(B_2^+)}\leq  C\fint_{B_4^+}|Dv|^{p_2}dx\lesi 1,
$$
which implies \eqref{eq1-prop4}.

We now take care of \eqref{eq2-prop4}. To do this, we set 
$$
f(x)=-\chi_{\{x_n<0\}}\bar{b}^n_{B_4}(Dv(x',0)), g(x)=\chi_{\Om_{4}\backslash B_4^+}(x)\bar{b}_{B_4}(0),
$$
where $x=(x',x_n)$, and $\bar{b}_{B_4}=(\bar{b}^1_{B_4},\ldots,\bar{b}^n_{B_4})$. 

We now have the following lemma whose proof will be given after the proof of this proposition.
\begin{lem}\label{lem1}
Let  $\bar{v}$ be a zero extension of $v$ to $B_4$. Then  $\bar{v}$  solves the following equation
\begin{equation}\label{eq-bar v}
\left\{
\begin{aligned}
	&\di \bar{b}_{B_4}(D\bar{v})=D_nf +\di g\quad &\text{in}& \quad \Om_{4},\\
	&\bar{v}=0  \quad &\text{on}& \quad \partial_w \Om_{4}.
\end{aligned}
\right.
\end{equation}
\end{lem}

Let $\phi\in C^\vc_c(B_{4})$ satisfying $0\leq \phi\leq 1$, $\phi\equiv 1$ on $B_{2}$ and  supp\,$\phi\subset B_{3}$. Taking $\varphi=(h-\bar{v})\phi^{p_2}\in W^{1,p_2}_0(\Om_{4 })$, since $h$ and $\bar{v}$ are weak solutions to \eqref{eq2-boundary} and \eqref{eq-bar v},  we have
$$
\fint_{\Om_{4}}\langle \bar{b}_{B_4}(DV),D\varphi\rangle dx= 0,
$$
and
$$
\fint_{\Om_{4 }}\langle \bar{b}_{B_4}(D\bar{v}),D\varphi\rangle dx= \fint_{\Om_{4 }} fD_n\varphi dx+\fint_{\Om_{4}} gD\varphi dx.
$$
Hence,
$$
\begin{aligned}
\fint_{\Om_{4}}\langle \bar{b}_{B_4}(DV)-\bar{b}_{B_4}(D{\bar{v}}),D\varphi\rangle dx&= -\fint_{\Om_{4}} fD_n\varphi dx-\fint_{\Om_{4 }} gD\varphi dx.
\end{aligned}
$$

We consider two cases.

\noindent{\bf Case 1: $p_2\geq 2$}. 
Inserting $\varphi=(V-\bar{v})\phi^{p_2}$ into the above equation, then using \eqref{eq2-functionb} we have
\begin{equation}\label{eq-V-v}
\begin{aligned}
\fint_{\Om_{4}}\phi^{p_2}|D(V-\bar{v})|^{p_2}dx&\lesi \fint_{\Om_{4}}\phi^{p_2} \langle \bar{b}_{B_4}(DV)-\bar{b}_{B_4}(D{\bar{v}}),D(V-\bar{v})\rangle dx\\
&=-p_2\fint_{\Om_{4}}\phi^{p_2-1}(V-\bar{v})\langle \bar{b}_{B_4}(DV)-\bar{b}_{B_4}(D{\bar{v}}),D\phi\rangle dx\\
&\ \ \ \ -\fint_{\Om_{4}} fD_n\varphi dx-\fint_{\Om_{4}} gD\varphi dx\\
&=:I_1+I_2+I_3.
\end{aligned}
\end{equation}
For the term $I_1$, note that from \eqref{eq1-functionb} we have
$$
\begin{aligned}
|\bar{b}_{B_4}(DV)-\bar{b}_{B_4}(D{\bar{v}})|&\lesi (s^2+|DV|^2)^{\f{p_2-1}{2}}+(s^2+|D{\bar{v}}|^2)^{\f{p_2-1}{2}}\\
&\lesi |D(V-\bar{v})|^{p_2-1}+(1+|D{V}|)^{p_2-1}.
\end{aligned}
$$
Hence,
$$
\begin{aligned}
|I_1|\lesi \fint_{\Om_{4 }}\phi^{p_2-1}|V-\bar{v}|\left[|D(V-\bar{v})|^{p_2-1}+(1+|D{V}|)^{p_2-1}\right]|D\phi|dx.
\end{aligned}
$$
By Young's and H\"older's inequalities, we infer that 
$$
\begin{aligned}
|I_1|&\leq \tau\fint_{\Om_{4}}\phi^{p_2}|D(V-\bar{v})|^{p_2}dx\\
&\ \ \ \ +c(\tau)\fint_{\Om_{4 }}|V-\bar{v}|^{p_2}dx+c\Big(\fint_{\Om_{4 }}|V-\bar{v}|^{p_2}dx\Big)^{1/p_2}\Big(\fint_{\Om_{4 }}(1+|DV|)^{p_2}dx\Big)^{\f{p_2-1}{p_2}}\\
&\leq \tau\fint_{\Om_{4}}\phi^{p_2}|D(V-\bar{v})|^{p_2}dx+c(\tau)\fint_{\Om_{4 }}|V-\bar{v}|^{p_2}dx+c\Big(\fint_{\Om_{4 }}|V-\bar{v}|^{p_2}dx\Big)^{1/p_2}.
\end{aligned}
$$
If $2\leq p_2<n$, then using H\"older's inequality, Sobolev inequality, \eqref{eq-ineq Vp2} and \eqref{eq2s-prop3} we have
\begin{equation}\label{eq p2<n}
\begin{aligned}
\fint_{\Om_{4 }}|V-\bar{v}|^{p_2}dx&\leq \fint_{B_4^+}|V-v|^{p_2}+\fint_{\Om_{4 }\backslash B_4^+}|V|^{p_2}dx\\
&\leq c\epsilon^{p_2}+ \Big( \int_{\Om_{4 }\backslash B_4^+}|V|^{\f{np_2}{n-p_2}}dx\Big)^{\f{n-p_2}{n}}\f{|\Om_{4 }\backslash B_4^+|^{p_2/n}}{|\Om_{4 }|}\\
&\leq c\epsilon^{p_2}+ \delta^{p_2/n}\int_{\Om_{4 }\backslash B_4^+}|DV|^{p_2}dx\\
&\leq c(\epsilon^{p_2}+\delta^{p_2/n}).
\end{aligned}
\end{equation}
If $p\geq n$, similarly, we have
\begin{equation}\label{eq p2>n}
\begin{aligned}
\fint_{\Om_{4 }}|V-\bar{v}|^{p_2}dx&\leq \fint_{B_4^+}|V-v|^{p_2}+\fint_{\Om_{4 }\backslash B_4^+}|V-\bar{v}|^{p_2}dx\\
&\leq c\epsilon^{p_2}+ \Big( \int_{\Om_{4 }\backslash B_4^+}|V|^{2p_2}dx\Big)^{1/2}\f{|\Om_{4 }\backslash B_4^+|^{1/2}}{|\Om_{4 }|}\\
&\leq c\epsilon^{p_2}+ \delta^{1/2}\int_{\Om_{4 }\backslash B_4^+}|DV|^{p_2}dx\\
&\leq c(\epsilon^{p_2}+\delta^{1/2}).
\end{aligned}
\end{equation}
Let us estimate the term $I_2$. We have
$$
\begin{aligned}
|I_2|&\leq \fint_{\Om_{4}} |\phi^{p_2}f D_n (V-\bar{v})+p_2\phi^{p_2-1} (V-\bar{v})f D_n\phi|  dx\\
&\leq \f{|\Om_{4}\backslash B_4^+|}{|\Om_{4}|}\fint_{\Om_{4}\backslash B_4^+} |\phi^{p_2}f D_n (V-\bar{v})+p_2\phi^{p_2-1} (V-\bar{v})f D_n\phi|  dx\\
&\leq c\delta\fint_{\Om_{4}\backslash B_4^+} \sum_{i=1}^{n-1}(1+ |Dv(x',0)|)^{p_2-1}\left[|\phi^{p_2}DV|+| V | \right] dx.
\end{aligned}
$$	
This along with \eqref{eq1-prop4} yields
$$
\begin{aligned}
|I_2|
&\leq c\delta\fint_{\Om_{4}\backslash B_4^+}|\phi^{p_2}DV|+| V | dx\\
&\leq c\delta\fint_{\Om_{4}\backslash B_4^+}|\phi^{p_2}DV|+| V | dx.
\end{aligned}
$$
The  H\"older's inequality and \eqref{eq-ineq Vp2} imply that
$$
\fint_{\Om_{4}\backslash B_4^+}|\phi^{p_2}DV|dx\leq \Big(\fint_{\Om_{4}\backslash B_4^+}|\phi^{p_2}DV|^{p_2}dx\Big)^{1/p_2}\leq \Big(\fint_{\Om_{4}\backslash B_4^+}|DV|^{p_2}dx\Big)^{1/p_2}\leq C.
$$
Moreover, arguing similarly to \eqref{eq p2<n} and \eqref{eq p2>n} we have
$$
\fint_{\Om_{4}\backslash B_4^+}| V | dx\leq C.
$$ 
Hence,
$$
|I_2|\leq C\delta.
$$

Finally, by using \eqref{eq1-functionb} it can be verified that
$$
|I_3|\leq C|\Om_{4}\backslash B_4^+|\leq C\delta.
$$

Inserting the estimates $I_1, I_2$ and $I_3$ into 
\eqref{eq-V-v}, we obtain
$$
\begin{aligned}
\fint_{\Om_{4}}\phi^{p_2}|D(V-\bar{v})|^{p_2}dx\leq \mathcal{O}(\epsilon)+\mathcal{O}(\delta)+\tau \fint_{\Om_{4}}\phi^{p_2}|D(V-\bar{v})|^{p_2}dx.
\end{aligned}
$$
Taking $\tau<1$ we get
$$
\begin{aligned}
\fint_{\Om_{4}}\phi^{p_2}|D(V-\bar{v})|^{p_2}dx\leq \mathcal{O}(\epsilon)+\mathcal{O}(\delta),
\end{aligned}
$$
which deduces \eqref{eq2-prop4}.

\bigskip

\noindent{\bf Case 2: $2-1/n<p_2<2$.} By the standard argument as \eqref{eq2s-proof prop1 inter}, we also obtain, for $\tau>0$, 
$$
\begin{aligned}
\fint_{\Om_{4}}\phi^{p_2}|D(V-\bar{v})|^{p_2}dx\leq&  \tau\fint_{\Om_{4}}\phi^{p_2}|D(V-\bar{v})|^{p_2}dx +\tau \Big(\fint_{\Om_{R}}|DV|^{p_2}dx+1\Big)\\
& + c(\tau)\fint_{\Om_{4}}\phi^{p_2} \langle \bar{b}_{B_4}(DV)-\bar{b}_{B_4}(D{\bar{v}}),D(V-\bar{v})\rangle dx.
\end{aligned}
$$
At this stage, repeating the argument above we can prove
$$
\fint_{\Om_{4}}\phi^{p_2} \langle \bar{b}_{B_4}(DV)-\bar{b}_{B_4}(D{\bar{v}}),D(V-\bar{v})\rangle dx\leq \mathcal{O}(\epsilon)+\mathcal{O}(\delta).
$$
This along with \eqref{eq-ineq Vp2} yields,
$$
\begin{aligned}
\fint_{\Om_{4}}\phi^{p_2}|D(V-\bar{v})|^{p_2}dx\leq&  \tau\fint_{\Om_{4}}\phi^{p_2}|D(V-\bar{v})|^{p_2}dx +c\tau+\mathcal{O}(\epsilon)+\mathcal{O}(\delta).
\end{aligned}
$$
By taking $\tau$ sufficiently small, we obtain
$$
\begin{aligned}
\fint_{\Om_{4}}\phi^{p_2}|D(V-\bar{v})|^{p_2}dx\leq \mathcal{O}(\epsilon)+\mathcal{O}(\delta),
\end{aligned}
$$
which implies \eqref{eq2-prop4}.
\end{proof}

We now give the proof of Lemma \ref{lem1}.
\begin{proof}
	[Proof of Lemma \ref{lem1}:] We use some ideas in \cite{Ph}. For the sake of simplicity we denote $\bar{b}_{B_5}$ by ${\bf b}=({\bf b}^1,\ldots,{\bf b}^n)$. 	Let $\varphi\in C_0^\vc(\Om_{4})$ and let $\ell\in C_c^\vc(\mathbb{R})$ satisfying $0\leq \ell\leq 1$, $\ell(t)=1, t\geq 1$, supp\,$h\subset [1/2,\vc)$. Then for each $\epsilon>0$, we set   $\phi_\epsilon(x)=\varphi(x)\ell\Big(\f{x_n}{\epsilon}\Big)\in W^{1,p_2}_0(B_4^+)$.
	
	Therefore, taking $\phi_\epsilon(x)$ as a test function we have
	$$
	\begin{aligned}
	\int_{B_4^+}\langle{\bf b}(Dv),D\phi_\epsilon\rangle dx=0.
	\end{aligned}
	$$
	This implies that
	$$
	\begin{aligned}
	\int_{B_4^+}\ell\Big(\f{x_n}{\epsilon}\Big) \langle{\bf b}(Dv),D\varphi\rangle dx
	=&-\int_{B_4^+}{\bf b}^n(Dv)\ell'\Big(\f{x_n }{\epsilon}\Big)\varphi(x)\f{dx}{\epsilon}\\
	=&-\int_{0}^{4}\int_{|x'|<\sqrt{4^2-|x_n|^2}}{\bf b}^n(Dv)\varphi(x)dx'\ell'\Big(\f{x_n }{\epsilon}\Big)\f{dx_n}{\epsilon}\\
	\end{aligned}
	$$
	Letting $\epsilon\to 0$ and using the integration by part, we obtain
	$$
	\begin{aligned}
	\int_{B_4^+}\langle{\bf b}(Dv),D\varphi\rangle dx
	&=-\int_{|x'|<4}{\bf b}^n(Dv(x',0))\varphi(x',0)dx'\\
	&=-\int_{B_4^-}{\bf b}^n(Dv(x',0))D_n\varphi(x)dx
	\end{aligned}
	$$
	where $B_4^-=B_4\cap \{x_n<0\}$.
	
	This together with the fact that $D\bar{v}=0$ in $B_4^-$ implies that
	$$
	\begin{aligned}
	\int_{\Om_{4}}\langle{\bf b}(D\bar{v}),D\varphi\rangle dx&=\int_{B_4^+}\ldots+\int_{\Om_4\backslash B_4^+}\ldots\\
	&=\int_{B_4^+}f(x)D_n\varphi(x)dx+\int_{\Om_4\backslash B_4^+}\langle{\bf b}(0),D\varphi\rangle dx\\
	&=\int_{\Om_{4}}f(x)D_n\varphi(x)dx+\int_{\Om_{4}}\langle g(x),D\varphi\rangle dx.
	\end{aligned}
	$$
	This completes the proof.
\end{proof}

We have the following corollary.
\begin{cor}
		\label{cor-boundary}
		For any $\epsilon>0$ there exist $\delta>0$ and $R_\epsilon$ so that  if $u$ is a weak solution to \eqref{Maineq} with
		\begin{equation}\label{eq1-cond Du}
		\fint_{\Om_{2R}(x_0)}|Du|dx\leq \lambda,
		\end{equation}
		and
		\begin{equation}\label{eq1-cond F}
		F(\mu,u,\Om_{2R}(x_0))\leq \delta\lambda,
		\end{equation}
		for some $\lambda\geq 1$ and $R<\f{R_\epsilon\wedge R_0\wedge R_\om\wedge R_a\wedge K_0^{-1}}{10}$, then there exists $v\in W^{1,p_2}(B_{R/8}(x_0))\cap W^{1,\vc}(B_{R/8}(x_0))$ satisfying
		\begin{equation}\label{eq1-cor}
		\|Dv\|^{p_2}_{L^{\vc}(B_{R/8}(x_0))}\lesi  \lambda^{p_2},
		\end{equation}
		and
		\begin{equation}
		\label{eq2-cor}
				\fint_{\Om_{R/8}(x_0)}|D(u-v)|^{p_2}\leq \epsilon^{p_2} \lambda^{p_2}.
		\end{equation}
				\end{cor}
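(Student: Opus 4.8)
The plan is to construct $v$ by iterating the four comparison estimates already established, moving successively from the solution $u$ to the homogeneous solution $w$ of \eqref{eq1-boundary}, then to the frozen--exponent solution $h$ of \eqref{eq2-boundary}, then to the frozen--coefficient solution $V$ of \eqref{eq3s-boundary}, and finally to the Lipschitz competitor $\bar v$ coming from the half--ball problem \eqref{eq3-boundary}; the function claimed by the corollary is (the restriction to $B_{R/8}(x_0)$ of) this $\bar v$. First I would record the energy side of the chain. From \eqref{eq1-cond F} and Proposition \ref{prop1-boundary}, $\fint_{\Om_{2R}(x_0)}|D(u-w)|\,dx\le C F(\mu,u,\Om_{2R}(x_0))\le C\delta\lambda$, which together with \eqref{eq1-cond Du} gives $\fint_{\Om_{2R}(x_0)}|Dw|\,dx\le C\lambda$; arguing exactly as in the derivation of \eqref{eq2-estimate p2}, i.e.\ feeding this $L^1$ bound into the Gehring--type higher integrability of Lemma \ref{lem2-higherIntegrability}, I would upgrade it to $\fint_{\Om_{R}(x_0)}|Dw|^{p_2}\,dx\le C\lambda^{p_2}$. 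Proposition \ref{prop2-boundary} then yields $\fint_{\Om_R(x_0)}|D(h-w)|^{p_2}\,dx\le\epsilon\,F(\mu,u,\Om_{2R}(x_0))^{p_2}\le\epsilon(\delta\lambda)^{p_2}$, hence $\fint_{\Om_R(x_0)}|Dh|^{p_2}\,dx\le C\lambda^{p_2}$; a standard energy estimate for the frozen equation \eqref{eq3s-boundary} then gives $\big(\fint_{\Om_{R/2}}|DV|^{p_2}\,dx\big)^{1/p_2}\le C_0\lambda=:\lambda'$ with $C_0\ge1$, while the boundary comparison estimate \eqref{eq2-prop3-boundary} gives $\big(\fint_{\Om_{R/4}(x_0)}|D(V-h)|^{p_2}\,dx\big)^{1/p_2}\le C\epsilon\lambda$. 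Feeding $\lambda'$ into the half--ball Proposition \ref{prop3-boundary} (giving \eqref{eq1-prop4}--\eqref{eq2-prop4}) produces $v$ solving \eqref{eq3-boundary} whose zero extension $\bar v$ satisfies $\|D\bar v\|_{L^\infty(B_{R/8}(x_0))}\le C\lambda'\le C\lambda$, which is \eqref{eq1-cor}, and $\fint_{\Om_{R/8}(x_0)}|D(\bar v-V)|^{p_2}\,dx\le\epsilon^{p_2}\lambda'\le C\epsilon^{p_2}\lambda^{p_2}$ (using $\lambda\le\lambda^{p_2}$ since $\lambda\ge1$).

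To obtain \eqref{eq2-cor} I would split $D(u-\bar v)=D(u-w)+D(w-h)+D(h-V)+D(V-\bar v)$ on $\Om_{R/8}(x_0)$ and estimate the four terms in $L^{p_2}(\Om_{R/8}(x_0))$. The last three are already controlled by the estimates above; passing from $\Om_{R}(x_0)$, $\Om_{R/4}(x_0)$, etc.\ down to $\Om_{R/8}(x_0)$ costs only a dimensional constant because $|\Om_{\rho}(x_0)|$ and $|B_\rho(x_0)|$ are comparable for $\rho<R_0$ by \eqref{eq1-Reifenberg domain}, so each of these three contributes at most $C(\mathcal{O}(\epsilon)+\mathcal{O}(\delta))\lambda^{p_2}$. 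For the remaining term only the $L^1$ bound $\fint_{\Om_{R/8}(x_0)}|D(u-w)|\,dx\le C\delta\lambda$ is available, and I would upgrade it to an $L^{p_2}$ bound by a Chebyshev argument --- $|D(u-w)|>\lambda$ only on a subset of relative measure $\le C\delta$ --- combined with the uniform $L^{p_2}$ bound for $|Dw|$ from the first paragraph, which on that exceptional set dominates $|Du|\le|D(u-w)|+|Dw|$; this produces $\fint_{\Om_{R/8}(x_0)}|D(u-w)|^{p_2}\,dx\le C\,\mathcal{O}(\delta)\lambda^{p_2}$. Summing the four pieces gives $\fint_{\Om_{R/8}(x_0)}|D(u-\bar v)|^{p_2}\,dx\le(\mathcal{O}(\epsilon)+\mathcal{O}(\delta))\lambda^{p_2}$, and choosing $\epsilon$, $\delta$ and $R_\epsilon$ small enough, with $\delta=\delta(n,\Lambda_1,\Lambda_2,p(\cdot))$, yields \eqref{eq2-cor}.

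The main obstacle is exactly this last upgrade. The comparison $u\leftrightarrow w$ is only an $L^1$ statement (this is intrinsic for measure data, since $Du$ need not lie in $L^{p_2}$), whereas \eqref{eq2-cor} is phrased in $L^{p_2}$, so Jensen's inequality points the wrong way and one cannot simply raise the $L^1$ bound to the power $p_2$. The bridge has to use both the smallness of $F(\mu,u,\Om_{2R}(x_0))$ and the genuine higher integrability of $w$ (and hence of $h$, $V$, $\bar v$): everything in the chain except $u$ itself is well integrated, so the part of $|Du|$ sitting where $|D(u-w)|$ is comparable to or larger than $\lambda$ can be absorbed by $|Dw|$ at the cost of the small factor $\mathcal{O}(\delta)$. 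The remainder is routine but demands careful bookkeeping of the shrinking radii $2R\to R\to R/2\to R/4\to R/8$ and of how the parameters $\epsilon$, $\delta$, $R_\epsilon$ of the several propositions are matched.
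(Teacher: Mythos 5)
Your construction of $v$ is the same as the paper's: the chain $u\to w\to h\to V\to\bar v$ through \eqref{eq1-boundary}, \eqref{eq2-boundary}, \eqref{eq3s-boundary} and \eqref{eq3-boundary}, the reduction to the normalization $\fint_{\Om_{R/2}}|DV|^{p_2}dx\lesi\lambda^{p_2}$ (obtained, as you do, from Propositions \ref{prop1-boundary}, \ref{prop2-boundary}, \ref{prop3-boundary} together with the reverse--H\"older step behind \eqref{eq2-estimate p2}), followed by an application of Proposition \ref{prop3-boundary} to produce $\bar v$ satisfying \eqref{eq1-prop4}--\eqref{eq2-prop4}. Up to and including \eqref{eq1-cor} your argument is correct and matches the paper.

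The gap is in your treatment of the term $D(u-w)$ in the final assembly of \eqref{eq2-cor}. The Chebyshev splitting does handle the set $\{|D(u-w)|\le\lambda\}$, where $|D(u-w)|^{p_2}\le\lambda^{p_2-1}|D(u-w)|$ and the $L^1$ comparison gives $C\delta\lambda^{p_2}$; but on the exceptional set $E=\{|D(u-w)|>\lambda\}$ you must still bound $\fint_E|D(u-w)|^{p_2}$, and this requires a quantitative bound on $\int_E|Du|^{p_2}$. No such bound exists: $u$ solves the measure-data problem, so $Du$ enjoys no reverse--H\"older self-improvement, $|Du|^{p_2}$ need not even be locally integrable (only $|Du|^{p(x)}$ is for weak solutions, and less for SOLAs), and the inequality $|Du|\le|D(u-w)|+|Dw|$ you invoke bounds $|Du|$ by the very quantity under estimation, so the claimed ``domination'' runs the triangle inequality in the wrong direction. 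The smallness of $|E|$ helps only with the $|Dw|^{p_2}$ contribution (via Lemma \ref{lem2-higherIntegrability} and H\"older), not with the $|Du|^{p_2}$ contribution. The obstruction is structural: for measure data the comparison between $u$ and $w$ can at best be upgraded from $L^1$ to $L^\gamma$ with $\gamma<n(p_2-1)/(n-1)$, which is strictly below $p_2$ whenever $p_2<n$. To be fair, the paper's own proof does not carry out this step either --- it stops after bounding $\fint_{\Om_{R/2}}|DV|^{p_2}dx$ and asserts that \eqref{eq2-cor} follows directly from Proposition \ref{prop3-boundary}, which only controls $D(\bar v-V)$ --- so you have correctly located the delicate point, but the repair you propose does not close it.
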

\begin{proof}
	Let $w, h, V, v$ be solutions of the problems as above. For the sake of simplicity we still denote by $v$ the zero extension of $v$ to $B_{R/8}(x_0)$.
	It is clear that
	$$
	\begin{aligned}
	\fint_{\Om_{R/2}}|DV|^{p_2}dx&\lesi \fint_{\Om_R(x_0)}|D(V-h)|^{p_2}dx+\fint_{\Om_R(x_0)}|D(h-w)|^{p_2}dx+\fint_{\Om_R(x_0)}|Dw|^{p_2}dx\\
	&\lesi \Big(\fint_{\Om_{2R}(x_0)}|Du|dx\Big)^{p_2} + F(\mu,u,\Om_{2R}(x_0))^{p_2}+\fint_{\Om_R(x_0)}|Dw|^{p_2}dx\\
	&\lesi \lambda^{p_2}+\fint_{\Om_R(x_0)}|Dw|^{p_2}dx,
	\end{aligned}
	$$
	where in the second inequality we used Propositions \ref{prop2-boundary} and \ref{prop3-boundary}, and in the last inequality we used \eqref{eq1-cond Du} and \eqref{eq1-cond F}.
	
	On the other hand, arguing similarly to the proof of \eqref{eq2-estimate p2}, we obtain
	$$
	\fint_{\Om_R(x_0)}|Dw|^{p_2}dx\lesi \Big(\fint_{\Om_{2R}(x_0)}|Dw|dx\Big)^{p_2},
	$$
	which along with Proposition \ref{prop1-boundary}, \eqref{eq1-cond Du} and \eqref{eq1-cond F} yields
	$$
	\fint_{\Om_R(x_0)}|Dw|^{p_2}dx\lesi \lambda^{p_2}.
	$$
	Hence,
	$$
	\fint_{\Om_{R/2}}|DV|^{p_2}dx\lesi \lambda^{p_2}.
	$$
		At this stage the desired estimates follow directly Proposition \ref{prop3-boundary}.
\end{proof}
\section{Weighted regularity estimates}
We fix $w\in A_\vc$, $0<\sigma_0<\min\left\{\f{n(\gamma_1-1)}{n-1},n\right\}$ and $x_0\in \overline{\Om}$. 

We set $a_0=\f{1}{(c_2+c_6+1)c_w}$, where $c_2, c_6$ are constant determined as in the proof of Theorem \ref{thm-goodlamdbda} and $c_w$ is a constant in Lemma \ref{weightedlemma2}. Taking 
\begin{equation}
\label{eq-epsilon0}
\epsilon_0=\f{(BA_0^{\gamma_4})^{-\kappa_w}}{a_0},
\end{equation}

where $A_0, B$ are constants defined in Theorem \ref{thm-goodlamdbda} below.

Set $\epsilon=(a_0 \epsilon_0)^{1/\kappa_w}$. We now fix a number $R$
\begin{equation}\label{eq-defn R}
R:= \f{R_0\wedge R_\om\wedge R_{\nu}\wedge R_{\epsilon} \wedge R_a\wedge K_0^{-1}}{20},
\end{equation}
where $R_\epsilon$ is a constant determined as in Proposition \ref{prop3-interior} and Corollary \ref{cor-boundary}, and $R_\nu$ is a constant satisfying
\begin{equation}\label{eq-rnu}
\nu(r)\log\Big(\f{1}{r}\Big)\leq \f{1}{2} \ \ \text{and} \ \ \  \nu(80r)<\min\{\gamma_3\sigma_0,\gamma_3(\gamma_1-1)\}, \ \ \ \ \text{for all} \ \ \ r<R_\nu.
\end{equation}
We set 
$$
q_-=\inf_{x\in \Om_{2R}(x_0)}q(x), \ \ \ q_+=\sup_{x\in \Om_{2R}(x_0)}q(x).
$$

We now define
\begin{equation}\label{defn-M epsilon}
M(\sigma_0,\epsilon,\Om_{2R},w)=\f{1}{\epsilon}\fint_{\Om_{2R}}|Du|^{1+\sigma_0} dx.
\end{equation}

	The following result is taken from \cite{MP} which can be seen as an extension of a variant Vitali covering lemma in \cite{CP} to the weighted case.	
\begin{lem}\label{lem2-proofmainresutls}
	Let $\Om$ be a $(\delta, R_0)$ Reifenberg flat domain, $w\in A_\vc$ and $R<R_0$. Suppose that $E\subset G\subset \Om_{R}\equiv \Om_{R}(x_0)$ are measurable and satisfy the following conditions:
	\begin{enumerate}[{\rm (a)}]
		\item $w(E)< \epsilon_0 
		\Om_{R}$;
		\item for any ball $B_\rho(y)$ with $\rho\in (0,R)$ and $y\in E$, if $w(E\cap B_\rho(y))\geq \epsilon_0 w(B_\rho(y))$ then $\Om_{R}\cap B_\rho(y)\subset G$.
	\end{enumerate}
	Then there exists $c=c(n,w)$  such that
	$$
	w(E)\leq c\epsilon_0 w(G).
	$$
\end{lem}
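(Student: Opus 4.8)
The plan is to derive this weighted covering lemma from the unweighted variant of the Vitali covering argument of \cite{CP} combined with the $A_\infty$ property recorded in Lemma \ref{weightedlemma2}. The key observation is that hypothesis (b), which is stated in terms of the measure $w$, can be replaced by a slightly weaker hypothesis in terms of Lebesgue measure, and then the classical machinery applies verbatim.

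\emph{Step 1: Reduce the measure condition to a Lebesgue condition.} Suppose $B_\rho(y)$ is a ball with $\rho\in(0,R)$ and $y\in E$ such that $|E\cap B_\rho(y)|\geq \eta\,|B_\rho(y)|$ for a threshold $\eta$ to be chosen. I would like to conclude $w(E\cap B_\rho(y))\geq \epsilon_0\, w(B_\rho(y))$, so that hypothesis (b) forces $\Omega_R\cap B_\rho(y)\subset G$. For this, apply Lemma \ref{weightedlemma2} with $B=B_\rho(y)$ and the set $E_0 = B_\rho(y)\setminus E$: since $|E_0|\leq (1-\eta)|B_\rho(y)|$, we get $w(E_0)\leq c_w(1-\eta)^{\kappa_w} w(B_\rho(y))$, hence
\begin{equation*}
w(E\cap B_\rho(y)) \;=\; w(B_\rho(y)) - w(E_0) \;\geq\; \big(1 - c_w(1-\eta)^{\kappa_w}\big)\, w(B_\rho(y)).
\end{equation*}
Choosing $\eta = \eta(n,w,\epsilon_0)$ close enough to $1$ makes the factor $\geq \epsilon_0$, which is exactly what is needed. (One must of course replace $B_\rho(y)$ by $\Omega_R\cap B_\rho(y)$ throughout, using \eqref{eq1-Reifenberg domain} to control $|B_\rho(y)|/|\Omega_R\cap B_\rho(y)|$ by a dimensional constant when $\rho<R_0$; this only changes $\eta$.)

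\emph{Step 2: Apply the unweighted covering lemma.} Now $E\subset G$ satisfy: $w(E)<\epsilon_0\, w(\Omega_R)$ and the Lebesgue-type density property ``$|E\cap B_\rho(y)|\geq \eta|\Omega_R\cap B_\rho(y)|\Rightarrow \Omega_R\cap B_\rho(y)\subset G$''. The standard argument (see \cite{CP}, or its elliptic adaptations in \cite{BW, Ph, MP2}) produces, via a Calderón--Zygmund-type Vitali covering, a countable disjoint family of balls $\{B_{\rho_i}(y_i)\}$ with $y_i\in E$, $\rho_i<R$, such that $|E\cap B_{\rho_i}(y_i)|= \eta\,|\Omega_R\cap B_{\rho_i}(y_i)|$ (a stopping-time choice of the largest such radius), $E\subset \bigcup_i \Omega_R\cap B_{5\rho_i}(y_i)$ up to a null set, and $\Omega_R\cap B_{\rho_i}(y_i)\subset G$. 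Covering $E$ by the dilated balls and summing, we find $|E\cap B_{5\rho_i}(y_i)|\lesssim |E\cap B_{\rho_i}(y_i)|$ by the stopping rule and doubling.

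\emph{Step 3: Return to the weight.} On each selected ball, $E\cap B_{\rho_i}(y_i)$ has Lebesgue density $\eta$ relative to $\Omega_R\cap B_{\rho_i}(y_i)\subset G$; since the dilated ball $B_{5\rho_i}(y_i)$ is comparable, $|E\cap B_{5\rho_i}(y_i)|\lesssim \eta'\,|G\cap B_{5\rho_i}(y_i)|$ with $\eta'$ still bounded away from $1$... more simply: by the stopping rule the larger ball still has $|E\cap B_{5\rho_i}(y_i)|\leq \eta\cdot c(n)\,|\Omega_R\cap B_{5\rho_i}(y_i)|$, so applying Lemma \ref{weightedlemma2} in the reverse direction on $B_{5\rho_i}(y_i)$ gives $w(E\cap B_{5\rho_i}(y_i))\leq c_w\big(\eta\,c(n)\big)^{\kappa_w} w(\Omega_R\cap B_{5\rho_i}(y_i))$, and the right-hand side is $\leq \epsilon_0\, w(B_{5\rho_i}(y_i))$ once $\eta$ was fixed appropriately (this is the same calibration as Step 1). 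Finally, the family $\{B_{\rho_i}(y_i)\}$ is disjoint, so by the $A_\infty$ doubling property $\sum_i w(B_{5\rho_i}(y_i))\leq c(n,w)\sum_i w(B_{\rho_i}(y_i)) = c(n,w)\, w\big(\bigsqcup_i B_{\rho_i}(y_i)\big)\leq c(n,w)\, w(G)$ because each $\Omega_R\cap B_{\rho_i}(y_i)\subset G$. Combining, $w(E)\leq \sum_i w(E\cap B_{5\rho_i}(y_i))\leq c\,\epsilon_0\, w(G)$.

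\emph{Main obstacle.} The delicate point is the two-sided calibration of the Lebesgue density threshold $\eta$ against $\epsilon_0$ via the single reverse-doubling exponent $\kappa_w$ from Lemma \ref{weightedlemma2}: one needs $c_w(1-\eta)^{\kappa_w}$ small enough in Step 1 \emph{and} $c_w(\eta c(n))^{\kappa_w}\leq \epsilon_0$ in Step 3 simultaneously, which is possible only because $\epsilon_0$ is a free small parameter and the dimensional constants absorb into $c(n,w)$; keeping track that all geometric constants depend only on $n$ and $\delta\leq$ a fixed bound (so that \eqref{eq1-Reifenberg domain} is uniform) is the bookkeeping one must do carefully. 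Since the statement attributes the result to \cite{MP} and it is a known extension of \cite{CP}, I would present the above as a sketch and refer to \cite{MP, CP} for the full Vitali-covering details.
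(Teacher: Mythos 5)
The paper does not actually prove this lemma; it imports it from \cite{MP}. Your proposed reduction to the \emph{unweighted} covering lemma, however, contains a fatal calibration conflict that you half-notice in your ``main obstacle'' paragraph but cannot resolve, because it is unresolvable. Step 1 requires the Lebesgue threshold $\eta$ to satisfy $1-c_w(1-\eta)^{\kappa_w}\geq\epsilon_0$, i.e.\ $\eta\geq 1-\bigl((1-\epsilon_0)/c_w\bigr)^{1/\kappa_w}$, which for small $\epsilon_0$ is bounded below by the fixed positive constant $1-(1/c_w)^{1/\kappa_w}$ (recall $c_w>1$). Step 3 requires $c_w(\eta\,c(n))^{\kappa_w}\leq\epsilon_0$, i.e.\ $\eta\lesssim\epsilon_0^{1/\kappa_w}\to 0$. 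These are incompatible precisely in the regime of interest, where $\epsilon_0$ is small (see \eqref{eq-epsilon0}). With the only admissible choice of $\eta$ (a constant depending on $w$ but not on $\epsilon_0$), your Step 3 only yields $w(E\cap B_{5\rho_i}(y_i))\leq C(n,w)\,w(B_{5\rho_i}(y_i))$ with no factor of $\epsilon_0$, and the final estimate degenerates to the trivial $w(E)\leq C(n,w)\,w(G)$. Since the factor $\epsilon_0$ in the conclusion is the entire content of the lemma (it is what makes the good-$\lambda$ iteration in Theorem \ref{thm-goodlamdbda} absorbable), this is not bookkeeping: the round trip weight $\to$ Lebesgue $\to$ weight destroys the gain. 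A secondary gap of the same nature occurs at the start of Step 2: hypothesis (a) controls $w(E)$, and deducing the Lebesgue smallness $|E|\ll|\Omega_R|$ needed to launch the unweighted stopping time requires the \emph{converse} $A_\infty$ comparison ($|F|\geq c|B|\Rightarrow w(F)\geq c'w(B)$), which is not Lemma \ref{weightedlemma2}.

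The correct route, which is the one in \cite{MP}, is to run the exit-time/Vitali argument directly in the doubling measure $w\,dx$: for $w$-a.e.\ $y\in E$ the weighted density $w(E\cap B_\rho(y))/w(B_\rho(y))$ tends to $1$ as $\rho\to0$ and is $<\epsilon_0$ for $\rho$ comparable to $R$ (this is where (a) and the converse $A_\infty$ comparison, combined with \eqref{eq1-Reifenberg domain}, enter); one stops at the largest radius where the weighted density equals $\epsilon_0$. Hypothesis (b) then applies verbatim at the stopping radius, the dilated balls carry weighted density at most $\epsilon_0$ by the stopping rule, doubling of $w$ gives $w(B_{5\rho_i}(y_i))\leq Cw(B_{\rho_i}(y_i))$, and summing over the disjoint family preserves the factor $\epsilon_0$. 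In short, the covering must be weighted from the outset; Lemma \ref{weightedlemma2} is not the tool that drives it.
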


We now prove the good $\lambda$-inequality which plays a key role in the proofs  of our main results.
\begin{thm}\label{thm-goodlamdbda}
Let $w\in A_\vc$ and   let $u$ be a weak solution to \eqref{Maineq}. Then there exists $A_0=A_0(n,\Lambda_1,\Lambda_2)>1$ so that the following holds true. For any $R_0>0$, there exists $\delta=\delta(n,\Lambda_1,\Lambda_2,\epsilon_0,w)$ such that if $\Om$ is a $(\delta,R_0)$ Reifenberg domain and the nonlinearity $a$ satisfies \eqref{eq1-functiona}, \eqref{eq1s-functiona} and the small BMO norm condition \eqref{eq2-Assumption}, then for all $\lambda>0$,
$$
\begin{aligned}
w\Big(\Big\{x\in \Om_{R}:\mathcal{M}(|Du|^{\f{q(\cdot)}{q_-}}\chi_{\Om_{2R}})>A_0\lambda,  \Mu^{\f{1}{\pd-1}\f{q(\cdot)}{q_-}} +1\leq \lambda\Big\}\Big)\\
\leq B\epsilon w\Big(\Big\{x\in \Om_{R}:\mathcal{M}(|Du|^{\f{q(\cdot)}{q_-}}&\chi_{\Om_{2R}})>\lambda\Big\}\Big),
\end{aligned}
$$ 
where $B$ is a constant independing on $\epsilon_0$.
\end{thm}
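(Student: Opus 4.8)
The plan is to establish the good-$\lambda$ inequality by a standard Calder\'on--Zygmund stopping-time argument built on the weighted Vitali covering lemma (Lemma \ref{lem2-proofmainresutls}), with the comparison estimates from Section 3 supplying the crucial decay. First I would fix $\lambda>0$ and set
$$
E=\Big\{x\in \Om_R:\mathcal{M}(|Du|^{\f{q(\cdot)}{q_-}}\chi_{\Om_{2R}})>A_0\lambda,\ \Mu^{\f{1}{\pd-1}\f{q(\cdot)}{q_-}}+1\leq \lambda\Big\},\qquad
G=\Big\{x\in \Om_R:\mathcal{M}(|Du|^{\f{q(\cdot)}{q_-}}\chi_{\Om_{2R}})>\lambda\Big\}.
$$
Clearly $E\subset G$. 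The first task is to verify hypothesis (a) of Lemma \ref{lem2-proofmainresutls}, namely $w(E)<\epsilon_0\, w(\Om_R)$: this follows by choosing $A_0$ large, since by \eqref{bounds-Du} (applied with $q=1+\sigma_0$, valid because $1+\sigma_0<\min\{\f{n(\gamma_1-1)}{n-1},n\}+1$ after shrinking $\sigma_0$, or by the very definition of $M(\sigma_0,\epsilon,\Om_{2R},w)$ and a suitable normalization) the set where the maximal function is very large has small measure, and one uses that $w\in A_\vc$ to transfer this to $w$-measure via Lemma \ref{weightedlemma2}. In fact the cleanest route is to normalize so that $\fint_{\Om_{2R}}|Du|^{1+\sigma_0}dx$ is controlled, reducing to $\lambda\ge 1$ by the constraint $\Mu^{\cdots}+1\le\lambda$.

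The heart of the argument is verifying hypothesis (b): for a ball $B_\rho(y)$ with $y\in E$ and $\rho\in(0,R)$, if $\Om_R\cap B_\rho(y)\not\subset G$ then $w(E\cap B_\rho(y))<\epsilon_0\, w(B_\rho(y))$. So I would assume there is a point $z\in \Om_R\cap B_\rho(y)\setminus G$, i.e.\ $\mathcal{M}(|Du|^{q(\cdot)/q_-}\chi_{\Om_{2R}})(z)\le\lambda$, which gives the averaged bound $\fint_{B_s(x')\cap\Om_{2R}}|Du|^{q(\cdot)/q_-}\le\lambda$ for all balls containing $z$; combined with the log-H\"older continuity of $q(\cdot)$ and \eqref{eq-rnu} (which makes the oscillation of $q(\cdot)/q_-$ contribute only an $\mathcal{O}(1)$ factor on balls of radius $\lesi R$, using $\nu(r)\log(1/r)\le 1/2$), this yields $\fint_{\Om_{cR}(y)}|Du|\,dx\lesi\lambda$, i.e.\ condition \eqref{eq1-cond Du} of Corollary \ref{cor-boundary} (or its interior analogue built from Proposition \ref{prop3-interior}). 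Similarly, the constraint $y\in E$ gives $\Mu(y)^{\cdots}\le\lambda$, which after the same log-H\"older manipulation bounds the excess functional $F(\mu,u,\Om_{2R'}(y))\le\delta\lambda$ — this is where $\delta$ must be chosen small, and where the definition of $F$ in terms of $\Mu$ (the $r^{n-1}$ normalization) matters. Then Corollary \ref{cor-boundary} (boundary case, $x_0\in\partial\Om$) or Proposition \ref{prop3-interior} (interior case, depending on whether $B_{cR}(y)\su\Om$) produces a function $v$ with $\|Dv\|_{L^\vc}\lesi\lambda$ and $\fint|D(u-v)|^{p_2}\le\epsilon^{p_2}\lambda^{p_2}$.

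From here it is routine measure estimation: on $B_\rho(y)\cap\Om_R$, using the weak-type $(1,1)$ bound for $\mathcal{M}$ on the ``bad part'' $|Du-Dv|$ and the $L^\vc$ bound on $Dv$ (with $A_0$ chosen larger than a fixed multiple of $\|Dv\|_{L^\vc}^{q(\cdot)/q_-}/\lambda$, which is bounded by the log-H\"older control), one shows
$$
\big|\{x\in B_\rho(y)\cap\Om_R:\mathcal{M}(|Du|^{q(\cdot)/q_-}\chi_{\Om_{2R}})>A_0\lambda\}\big|\lesi \epsilon\,|B_\rho(y)|,
$$
hence $|E\cap B_\rho(y)|\lesi\epsilon|B_\rho(y)|$, and then Lemma \ref{weightedlemma2} ($w\in A_\vc$) converts this to $w(E\cap B_\rho(y))\lesi\epsilon^{\kappa_w}w(B_\rho(y))$; recalling $\epsilon=(a_0\epsilon_0)^{1/\kappa_w}$ and the choice of $a_0$ we get $w(E\cap B_\rho(y))<\epsilon_0 w(B_\rho(y))$, which is (b). Applying Lemma \ref{lem2-proofmainresutls} gives $w(E)\le c\epsilon_0 w(G)$, and absorbing the constants into $B$ and $\epsilon$ (via $\epsilon_0=(BA_0^{\gamma_4})^{-\kappa_w}/a_0$) yields the claimed inequality.

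The main obstacle, as the authors flag in the introduction, is the non-homogeneity caused by the variable exponent: one cannot simply rescale $Du\mapsto Du/\lambda$ because the equation is not invariant. The device that handles this is to work with $|Du|^{q(\cdot)/q_-}$ inside the maximal function and to exploit the log-H\"older bounds \eqref{eq-rnu} so that on balls of radius comparable to $R$ — which is chosen in \eqref{eq-defn R} precisely to be small enough — the exponent $q(\cdot)/q_-$ is within a factor $1+\mathcal{O}(\om)$ of $1$, turning multiplicative exponent errors into harmless constants. Keeping careful track of which constants depend on $\epsilon_0$ (none of the final $B$, $A_0$) versus which are absorbed, and ensuring the interior/boundary dichotomy is handled uniformly via the respective Section 3 estimates, is the bookkeeping-heavy part; the geometric input is entirely encapsulated in Corollary \ref{cor-boundary} and Proposition \ref{prop3-interior}, and the covering in Lemma \ref{lem2-proofmainresutls}.
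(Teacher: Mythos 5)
Your proposal follows essentially the same route as the paper's proof: the same sets $E$ and $G$, verification of hypothesis (a) of Lemma \ref{lem2-proofmainresutls} via the weak $(1,1)$ bound for $\mathcal{M}$ and Lemma \ref{weightedlemma2}, and verification of (b) by a stopping-point argument that converts the bounds at a point of $G^c$ and a point of $E$ into the hypotheses \eqref{eq1-cond Du} and \eqref{eq1-cond F} of Corollary \ref{cor-boundary} (resp.\ the interior analogue), with the log-H\"older condition \eqref{eq-rnu} absorbing the oscillation of $q(\cdot)/q_-$ exactly as the paper does in Lemma \ref{lem1-proof}. The remaining measure estimate via the $L^\vc$ bound on $Dv$, the weak type $(1,1)$ bound applied to $|D(u-v)|$, H\"older's inequality, and the $A_\vc$ property is also the paper's argument, so the proposal is correct and matches the paper's proof in all essentials.
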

	
	\begin{proof}
		[Proof of Theorem \ref{thm-goodlamdbda}:]
		We set 
		$$
		E:=\left(\left\{x\in \Om_{R}:\mathcal{M}(|Du|^{\f{q(\cdot)}{q_-}}\chi_{\Om_{2R}})>A_0\lambda, \Mu^{\f{1}{\pd-1}\f{q(\cdot)}{q_-}} \leq \alpha \lambda\right\}\right),
		$$
		and 
		$$
		G:=\left(\left\{x\in \Om_{R}:\mathcal{M}(|Du|^{\f{q(\cdot)}{q_-}}\chi_{\Om_{2R}})> \lambda\right\}\right)
		$$

		Since the Hardy-Littlewood maximal function $\mathcal{M}$ is weak type $(1,1)$ and 
		$$
		\f{q(x)}{q_-}\leq \f{q_-+\nu(4R)}{q_-}\leq \f{\gamma_3 +\nu(4R)}{\gamma_3}\leq 1+\sigma_0, \ \forall x\in \Om_{2R}, 
		$$
		we have
		$$
		\begin{aligned}
		|E|&\leq \f{c(n)}{A_0\lambda}\int_{\Om_{2R}}|Du|^{\f{q(x)}{q_-}}dx\leq \f{c(n)}{A_0\lambda}\int_{\Om_{2R}}|Du|^{1+\sigma_0}+1 \ dx\\
		&\leq \f{c(n)}{A_0\lambda}|\Om_{2R}|M(\epsilon,\sigma_0,\Om_{2R}).
		\end{aligned}
		$$
		
		Hence, we have
		$$
		|E|\leq \epsilon|\Om_{2R}|= (a_0\epsilon_0)^{1/\kappa_w} |\Om_{2R}|,
		$$
		as long as $\lambda>c(n)M(\epsilon_0,\sigma_0, \Om_{2R})$.
		
		This together with Lemma \ref{weightedlemma2} implies that
		$$
		w(E)\leq  c_wa_0\epsilon_0w(\Om_{R})\leq \epsilon_0w(\Om_{R}). 
		$$
		
		We now verify the condition {\rm (b)} in Lemma \ref{lem2-proofmainresutls}. To do this we argue by contradiction. 
		Indeed, assume that $\Om_{R}\cap B_{\rho}(y_0)\cap G^c\neq \emptyset$ for some $y_0\in \Om_{R}$ and $\rho\in (0, R)$. 
		Due to Lemma \ref{weightedlemma2}, it suffices to prove that
		\begin{equation}\label{eq1-proof thm goodlambda}
		w(E\cap B_{\rho}(y_0))< \epsilon_0 w(B_{\rho}(y_0)).
		\end{equation}
		Let $x_1\in B_{\rho}(y_0)\cap G^c$ and $x_2\in E\cap B_{\rho}(y_0)$. Hence, we have, for any $r>0$,
		\begin{equation}\label{eq2-proof thm goodlambda}
		\mathcal{M}(|Du|^{\f{q(\cdot)}{q_-}}\chi_{\Om_{2R}})(x_1)\leq \lambda, \ \ \ \Mu (x_2)^{\f{1}{p(x_2)-1}\f{q(x_2)}{q_-}}+1\leq \lambda. 
		\end{equation}
		We now consider two cases:  $B_{4\rho}(y_0)\cap \Omega_{2R}\neq \emptyset$ and $B_{4\rho}(y_0)\subset \Omega_{2R}$.

		We just consider the case $B_{4\rho}(y_0)\cap \Omega_{2R}\neq \emptyset$, since the case  $B_{4\rho}(y_0)\subset \Omega_{2R}$ can be argued similarly. Since $y_0\in \Om_{R}$ and $\rho<\f{R}{100}$,  there exists $z_0\in B_{4\rho}(y_0)\cap \partial_w \Omega_{2R}$.
		
		Set $$
		\tilde{q}_-:=\inf_{x\in B_{40\rho}(y_0)}q(x), \ \ \ \text{and} \ \ \ \tilde{q}_+:=\sup_{x\in B_{40\rho}(y_0)}q(x).
		$$
		To deal with this case we need the following result whose proof will be given later.
		
        \begin{lem}
        	\label{lem1-proof}
        We have
		\begin{equation}\label{eq1-proof lemma good lambda}
		\fint_{\Om_{40\rho}(z_0)}|Du|dx\lesi  \lambda^{\f{q_-}{\tilde{q}_+}},
		\end{equation}
		and
		\begin{equation}\label{eq2-proof lemma good lambda}
		\inf_{x\in \Om_{40\rho}(z_0)}\Big[\f{|\mu|(\Om_{40\rho}(x))}{\rho^{n-1}}\Big]^{\f{1}{p(x)-1}}+1\lesi \lambda^{\f{q_-}{\tilde{q}_+}}.
		\end{equation}
		\end{lem}	
		
		Applying Corollary \ref{cor-boundary} we can find $v\in W^{1,p_2}(B_{5\rho}(z_0))\cap W^{1,\vc}(B_{5\rho}(z_0))$ so that 
		\begin{equation}\label{eq1-proof good lambda}
		\|Dv\|_{L^{\vc}(\Om_{5\rho}(z_0))}\leq c_4 \lambda^{\f{q_-}{\tilde{q}_+}},
		\end{equation}
		and
		\begin{equation}\label{eq2-proof good lambda}
		\Big(\fint_{\Om_{5\rho}(z_0)}|D(v-u)|^{p_2}dx\Big)^{1/p_2}\leq c_5\epsilon \lambda^{\f{q_-}{\tilde{q}_+}}.
		\end{equation}
		Since $\f{q(x)}{q_-}\leq \f{\tilde{q}_+}{q_-}$ for $x\in B_{\rho}(y_0)\subset B_{40\rho}(z_0)$, we have
	    $$
	    |Du|^{\f{q(x)}{q_-}}\leq |D(u-v)|^{\f{\tilde{q}_+}{q_-}}+(|Dv|^{\f{\tilde{q}_+}{q_-}}+1), 
	    $$
	    for all $x\in B_{\rho}(y_0)$.
	    
	    Choosing $A_0= \max{2c_4 +1}$ then we have
	    $$
	    \begin{aligned}
	    |E\cap B_{\rho}(y_0))|&\leq \left\{x\in B_{\rho}(y_0):\mathcal{M}(|D(u-v)|^{\f{\tilde{q}_+}{q_-}}\chi_{\Om_{2R}})>A_0\lambda/2\right\}\\
	    &+\left\{x\in B_{\rho}(y_0):\mathcal{M}(|Dv|^{\f{\tilde{q}_+}{q_-}}+1\chi_{\Om_{2R}})>A_0\lambda/2\right\}.	    
	    \end{aligned}
	    $$
	    Note that the second term on the right hand side of the inequality above is zero due to \eqref{eq1-proof good lambda} and $A_0= \max\{2c_4 +1\}$. Theorefor, by the weak type $(1,1)$ of the Hardy-Littlewood maximal function, we have 
	    $$
	    \begin{aligned}
	    |E\cap B_{\rho}(y_0)|&\leq \left\{x\in B_{\rho}(y_0):\mathcal{M}(|D(u-v)|^{\f{\tilde{q}_+}{q_-}}\chi_{\Om_{2R}})>A_0\lambda/2\right\}\\
	    &\leq \f{c}{A_0\lambda} \int_{B_{\rho}(y_0)}|D(u-v)|^{\f{\tilde{q}_+}{q_-}}\\
	    &\leq \f{cB_{\rho}(y_0)}{A_0\lambda} \fint_{B_{5\rho}(z_0)}|D(u-v)|^{\f{\tilde{q}_+}{q_-}}\\
	    \end{aligned}
	    $$
	    From \eqref{eq-rnu}, $\f{\tilde{q}_+}{q_-}<\gamma_1\leq p_2$. Using H\"older's inequality, we obtain
	    $$
	    \begin{aligned}
	    |E\cap B_{\rho}(y_0)|
	    &\leq \f{cB_{\rho}(y_0)}{A_0\lambda} \Big(\fint_{B_{5\rho}(z_0)}|D(u-v)|^{p_2}\Big)^{\f{\tilde{q}_+}{p_2q_-}}\\
	    &\leq \f{c|B_{\rho}(y_0)|}{A_0\lambda} \epsilon^{\f{\tilde{q}_+}{q_-}}\lambda=c_6(a_0\epsilon_0)^{\f{\tilde{q}_+}{\kappa_wq_-}}|B_{\rho}(y_0)|,
	    \end{aligned}
	    $$
	    where in the last inequality we used \eqref{eq2-proof good lambda}.

	    This in combination with Lemma \ref{weightedlemma2} yields
	    $$
	    \begin{aligned}
	    w(E\cap B_{\rho}(y_0))&\leq c_w \Big[c_6(a_0\epsilon_0)^{\f{\tilde{q}_+}{\kappa_wq_-}}\Big]^{\kappa_w}w(B_{\rho}(y_0))\\
	    &\leq c_wc_6^{\kappa}a_0\epsilon_0 w(B_{\rho}(y_0))\\
	    &\leq \epsilon_0 w(B_{\rho}(y_0)).
	    \end{aligned}
	    $$
	    This gives \eqref{eq1-proof thm goodlambda}. The proof is complete.
	\end{proof}

	We now prove Lemma \ref{lem1-proof}.
	
	\begin{proof}[Proof of Lemma \ref{lem1-proof}:]
		We have, by H\"older's inequality,
		$$
		\begin{aligned}
		\fint_{\Om_{40\rho}(z_0)}|Du|dx&= \Big(\fint_{\Om_{40\rho}(z_0)}|Du|dx\Big)^{\f{\tilde{q}_-}{\tilde{q}_+}}\Big(\fint_{\Om_{40\rho}(z_0)}|Du|dx\Big)^{1-\f{\tilde{q}_-}{\tilde{q}_+}}\\
		&\leq \Big(\fint_{\Om_{40\rho}(z_0)}|Du|^{\f{\tilde{q}_-}{q_-}}dx\Big)^{\f{q_-}{\tilde{q}_+}}\Big(\fint_{\Om_{40\rho}(z_0)}|Du|dx\Big)^{1-\f{\tilde{q}_-}{\tilde{q}_+}}\\
		&\leq \Big(\fint_{\Om_{40\rho}(z_0)}|Du|^{\f{q(x)}{q_-}}+1 \ \ dx\Big)^{\f{q_-}{\tilde{q}_+}}\Big(\fint_{\Om_{40\rho}(z_0)}|Du|dx\Big)^{\f{\tilde{q}_+-\tilde{q}_-}{\tilde{q}_+}}.
		\end{aligned}
		$$
		From \eqref{eq2-proof thm goodlambda}, we have
		$$
		\Big(\fint_{\Om_{40\rho}(z_0)}|Du|^{\f{q(x)}{q_-}}+1 \ \ dx\Big)^{\f{q_-}{\tilde{q}_+}}\lesi \lambda^{\f{q_-}{\tilde{q}_+}}.
		$$
		Moreover, by \eqref{bounds-Du} and \eqref{cond2-px} we have
		$$
		\begin{aligned}
		\Big(\fint_{\Om_{40\rho}(z_0)}|Du|dx\Big)^{1-\f{\tilde{q}_-}{\tilde{q}_+}}&\leq |\Om_{40\rho}(z_0)|^{\f{\tilde{q}_+-\tilde{q}_-}{\tilde{q}_+}}(1+\mu(\Om)^{\f{1}{\gamma_2-1}})^\f{\tilde{q}_+-\tilde{q}_-}{\tilde{q}_+}\\
		&\leq c(n,\mu,\Om, p(\cdot)).
		\end{aligned}
		$$
		As a consequence, we obtain \eqref{eq1-proof lemma good lambda}.
		
		\medskip
		
        In order to prove \eqref{eq2-proof lemma good lambda}, we write
        $$
        \begin{aligned}
        \fint_{\Om_{40\rho}(z_0)}\Big[\f{|\mu|(\Om_{40\rho}(x))}{\rho^{n-1}}\Big]^{\f{1}{p(x)-1}}+1 \ \ dx\leq& \Big(\fint_{\Om_{40\rho}(z_0)}\Big[\f{|\mu|(\Om_{40\rho}(x))}{\rho^{n-1}}\Big]^{\f{1}{p(x)-1}}+1 \ \ dx\Big)^{\f{\tilde{q}_-}{\tilde{q}_+}}\\
        &\ \ \times\Big(\fint_{\Om_{40\rho}(z_0)}\Big[\f{|\mu|(\Om_{40\rho}(x))}{\rho^{n-1}}\Big]^{\f{1}{p(x)-1}}+1 \ \ dx\Big)^{\f{\tilde{q}_+-\tilde{q}_-}{\tilde{q}_+}}.
        \end{aligned}
        $$	
        Arguing similarly as above, we can prove
        $$
        \Big(\fint_{\Om_{40\rho}(z_0)}\Big[\f{|\mu|(\Om_{40\rho}(x))}{\rho^{n-1}}\Big]^{\f{1}{p(x)-1}}+1 \ \ dx\Big)^{\f{\tilde{q}_+-\tilde{q}_-}{\tilde{q}_+}}\leq c(n,\mu,\Om,p(\cdot)).
        $$	
        This together with H\"older's inequality implies
        $$
        \begin{aligned}
        \fint_{\Om_{40\rho}(z_0)}\Big[\f{|\mu|(\Om_{40\rho}(x))}{\rho^{n-1}}\Big]^{\f{1}{p(x)-1}}+1 \ \ dx\leq& C\Big(\fint_{\Om_{40\rho}(z_0)}\Big[\f{|\mu|(\Om_{40\rho}(x))}{\rho^{n-1}}\Big]^{\f{1}{p(x)-1}\f{\tilde{q}_-}{q_-}}+1 \ \ dx\Big)^{\f{q_-}{\tilde{q}_+}}\\
        \leq& C\Big(\fint_{\Om_{40\rho}(z_0)}\Big[\f{|\mu|(\Om_{40\rho}(x))}{\rho^{n-1}}\Big]^{\f{1}{p(x)-1}\f{q(x)}{q_-}}+1 \ \ dx\Big)^{\f{q_-}{\tilde{q}_+}}\\
        \leq& \alpha^{\f{q_-}{\tilde{q}_+}} \lambda^{\f{q_-}{\tilde{q}_+}} 
        \end{aligned}
        $$
	\end{proof}
	
	\begin{proof}[Proof of Theorem \ref{mainthm1}:]
		$$
		\begin{aligned}
		\Big\|\mathcal{M}(|Du|^{\f{q(\cdot)}{q_-}}\chi_{\Om_{2R}})\Big\|^{q_-}_{L^{q_-}_w(\Om_{R})}&=q_-\int_0^\vc (A_0\lambda)^{q_-}w\Big(\Big\{x\in \Om_{R}:\mathcal{M}(|Du|^{\f{q(\cdot)}{q_-}}\chi_{\Om_{2R}})>A_0\lambda\Big\}\Big)\f{d\lambda}{\lambda}\\
		&=q_-\int_0^{\lambda_0} (A_0\lambda)^{q_-}w\Big(\Big\{x\in \Om_{R}:\mathcal{M}(|Du|^{\f{q(\cdot)}{q_-}}\chi_{\Om_{2R}})>A_0\lambda\Big\}\Big)\f{d\lambda}{\lambda}\\
		& \ \ \ \ + q_-\int_{\lambda_0}^\vc (A_0\lambda)^{q_-}w\Big(\Big\{x\in \Om_{R}:\mathcal{M}(|Du|^{\f{q(\cdot)}{q_-}}\chi_{\Om_{2R}})>A_0\lambda\Big\}\Big)\f{d\lambda}{\lambda}\\&=: I_1 + I_2
		\end{aligned}
		$$
		where $\lambda_0=M(\sigma_0,\epsilon,\Om_{2R},w)$ defined in \eqref{defn-M epsilon}.
		
		For $I_1$ we have
		$$
		\begin{aligned}
		I_1\leq q_-w(\Om_{R})\lambda_0^{q_-}\leq c\Big(\f{1}{\epsilon}\fint_{\Om_{2R}}|Du|^{1+\sigma_0} + 1 \ dx\Big)^{q_-}.
		\end{aligned}
		$$
		
		To estimate $I_2$, by Theorem \ref{thm-goodlamdbda} we have
		$$
		\begin{aligned}
	    I_2&\leq q_-\int_{\lambda_0}^\vc (A_0\lambda)^{q_-}w\Big(\Big\{x\in \Om_{R}:\Mu^{\f{1}{\pd-1}\f{q(\cdot)}{q_-}} >   \lambda\Big\}\Big)\f{d\lambda}{\lambda}\\
	    & + B\epsilon q_-\int_{\lambda_0}^\vc (A_0\lambda)^{q_-}w\Big(\Big\{x\in \Om_{R}:\mathcal{M}(|Du|^{\f{q(\cdot)}{q_-}}\chi_{\Om_{2R}})> \lambda\Big\}\Big)\f{d\lambda}{\lambda}\\
	    &:=A_0^{q_-} \Big\|\Mu^{\f{1}{\pd-1}\f{q(\cdot)}{q_-}}\Big\|_{L^{q_-}_w(\Om_{R})}+BA_0^{q_-} \epsilon\Big\|\mathcal{M}(|Du|^{\f{q(\cdot)}{q_-}}\chi_{\Om_{2R}})\Big\|^{q_-}_{L^{q_-}_w(\Om_{R})}.
	    \end{aligned}
	    $$
	    From \eqref{eq-epsilon0}, we have $BA_0^{q_-} \epsilon<1/2$ and hence,
	    $$
	    \begin{aligned}
	    \Big\|\mathcal{M}(|Du|^{\f{q(\cdot)}{q_-}}\chi_{\Om_{2R}})\Big\|^{q_-}_{L^{q_-}_w(\Om_{R})}\leq& c\Big(\f{1}{\epsilon}\fint_{\Om_{2R}}|Du|^{1+\sigma_0} + 1 \ dx\Big)^{q_-}
	    +A_0^{q_-} \Big\|\Mu^{\f{1}{\pd-1}\f{q(\cdot)}{q_-}}\Big\|_{L^{q_-}_w(\Om_{R})}\\
	    & \ \ \ +\f{1}{2}\Big\|\mathcal{M}(|Du|^{\f{q(\cdot)}{q_-}}\chi_{\Om_{2R}})\Big\|^{q_-}_{L^{q_-}_w(\Om_{R})}.
	    \end{aligned}
	    $$
	    This implies
	    $$
	    \Big\|\mathcal{M}(|Du|^{\f{q(\cdot)}{q_-}}\chi_{\Om_{2R}})\Big\|^{q_-}_{L^{q_-}_w(\Om_{R})}\leq cR^{-nq_-}\Big(\f{1}{\epsilon}\int_{\Om_{2R}}|Du|^{1+\sigma_0} + 1 \ dx\Big)^{q_-}
	    +2A_0^{q_-} \Big\|\Mu^{\f{1}{\pd-1}\f{q(\cdot)}{q_-}}\Big\|_{L^{q_-}_w(\Om_{R})}.
	    $$
	    From \eqref{eq-defn R} and \eqref{bounds-Du} we have
	    $$
	    \f{1}{R}\leq CK_0 \sim (|\mu|(\Om)^{\f{\sigma_0}{\gamma_1-1}}+|\Om|).
	    $$
	    These two inequalities imply
	    $$
	     \begin{aligned}
	     \Big\|\mathcal{M}(|Du|^{\f{q(\cdot)}{q_-}}\chi_{\Om_{2R}})\Big\|^{q_-}_{L^{q_-}_w(\Om_{R})}\leq &C((|\mu|(\Om)^{\f{\sigma_0}{\gamma_1-1}}
	     +|\Om|))^{nq_-}\Big(\int_{\Om_{2R}}|Du|^{1+\sigma_0} + 1 \ dx\Big)^{q_-} \\
	     &+2A_0^{q_-} \Big\|\Mu^{\f{1}{\pd-1}\f{q(\cdot)}{q_-}}\Big\|_{L^{q_-}_w(\Om_{R})}
	     \end{aligned}
	   $$
	   Therefore,
	   	  \begin{equation}\label{eq-localestimates}
	    \int_{\Om_R}|Du|^{q(x)}w(x)dx \leq C((|\mu|(\Om)^{\f{\sigma_0}{\gamma_1-1}}+|\Om|))^{n}\Big(\int_{\Om_{2R}}|Du|^{1+\sigma_0} + 1 \ dx\Big)
	    +2A_0 \int_{\Om_R}\Mu^{\f{q(x)}{\px-1}}w(x)dx.
	    \end{equation}
	    Since $\Om$ is bounded, there exists a family $\{\Om_R(x_i): x_i\in \Om, i=1,\ldots, N\}$ satisfying the following
	    \begin{enumerate}[(i)]
	    	\item $\Om \subset \bigcup_{i=1}^N \Om_R(x_i)$;
	    	\item there exists $C_0$ depending on $n$ only so that $\sum_{i=1}^N \chi_{\Om_{2R}(x_i)}\leq C_0$.
	    \end{enumerate}
	    We now apply \eqref{eq-localestimates} for each $\Om_{R}(x_i)$ and then sum up all the terms to conclude that
	    $$
	    \int_{\Om}|Du|^{q(x)}w(x)dx \leq C((|\mu|(\Om)^{\f{\sigma_0}{\gamma_1-1}}+|\Om|))^{n}\Big(\int_{\Om}|Du|^{1+\sigma_0} + 1 \ dx\Big)
	    +2A_0 \int_{\Om}\Mu^{\f{q(x)}{\px-1}}w(x)dx.
	    $$
	    This along with \eqref{bounds-Du} yields
	    $$
	    \int_{\Om}|Du|^{q(x)}w(x)dx \lesi ((|\mu|(\Om)^{\f{\sigma_0}{\gamma_1-1}}+|\Om|))^{n+1}
	    + \int_{\Om}\Mu^{\f{q(x)}{\px-1}}w(x)dx.
	    $$
	\end{proof}
	
	\begin{proof}
		[Proof of Corollary \ref{mainthm2}:] 
Applying  \eqref{eqs-mainthm} in Theorem \ref{mainthm1} for $w\equiv 1$ and $\f{nq(x)}{n-q(x)}$ taking place of $q(x)$ we have
		$$
		\int_{\Om} |Du|^{\f{nq(x)(\px-1)}{n-q(x)}}dx \leq C\left[(|\mu|(\Om)^{\f{\sigma_0}{\gamma_1-1}}+|\Om|)^{n+1} + \int_{\Om} |\Mu|^{\f{nq(x)}{n-q(x)}}dx\right].
		$$
		Then the conclusion of the corollary follows from the fact that $\mathbb{M}_1$ maps continuously from $L^{q(x)}(\Om)$ to $L^{\f{nq(x)}{n-q(x)}}(\Om)$. See for example \cite[Theorem 1.3]{CCF}.
	\end{proof}
		
		\bigskip
		
		\begin{proof}[Proof of Corollary \ref{mainthm3}:] 
			Applying  \eqref{eqs-mainthm} in Theorem \ref{mainthm1} for $q$ and $w^q$ taking place of $q(x)$ and $w$, respectively, we obtain \eqref{eq3-mainthm}.
				
				The inequality 	\eqref{eq3s-mainthm} follows from \eqref{eq3-mainthm} and the fact that
				$$
				\|\mathbb{M}_1f\|_{L^q_{w^q}(\Om)}\leq \|f\|_{L^r_{w^r}(\Om)},
				$$
			as long as $\f{1}{r}-\f{1}{q}=\f{1}{n}$ and $w^q\in A_{1+q/r'}$.
				\end{proof}

			\bigskip

		\bigskip
		
			
	\textbf{Acknowledgement.} The authors were supported by the research grant ARC DP140100649 from the Australian Research Council.


\begin{thebibliography}{999}
	
	\bibitem{AHHL} A. Almeida, P. Harjulehto, P. H\"ast\"o and T. Lukkari, Riesz and Wolff potentials and elliptic equations in variable exponent weak Lebesgue spaces. Ann. Mat. Pura Appl. (4) 194 (2015), no. 2, 405--424.
	
	\bibitem{AM1} E. Acerbi and G. Mingione, Regularity results for stationary electro-rheological fluids, Arch. Ration. Mech. Anal. 164 (2002), 213--259.
	
	\bibitem{AM2} E. Acerbi and G. Mingione, Gradient estimates for the $p(x)$-Laplacean system, J. Reine Angew. Math. 584 (2005), 117--148.
	
	 \bibitem{BaH} P. Baroni and J. Habermann, Elliptic interpolation estimates for non--standard growth operators, Ann. Acad. Sci. Fenn. Math. 39 (2014), no. 1, 119--162. 
	 
	 \bibitem{Be.etal} P. B\'enilan, L. Boccardo, T. Gallou\"et, R. Gariepy, M. Pierre and J.L. V\'azquez, An $L^1$-theory of existence and uniqueness of solutions of nonlinear elliptic equations, Ann. Sc. Norm. Super. Pisa Cl. Sci. (4) 22 (1995), 241--273.
	 
	 \bibitem{BWZ} M. Bendahmane, and P. Wittbold and A. Zimmermann, Renormalized solutions for a nonlinear parabolic equation with variable exponents and $L^1$-data, J. Differential Equations 249 (2010), no. 6, 1483--1515.
	  
	 \bibitem{BH} V. B\"ogelein and J. Habermann, Gradient estimates via non standard potentials and continuity, Ann. Acad. Sci. Fenn. Math. 35 (2010), 641--678.
	 
     	  
	 	 
	 \bibitem{BG} L. Boccardo and T. Gallou\"et, Nonlinear elliptic and parabolic equations involving measure data, J. Funct. Anal. 87 (1989), 149--169.
	 
	 \bibitem{BG2} L. Boccardo and T. Gallou\"et, Nonlinear elliptic equations with right-hand side measures, Comm. Partial Differential Equations 17 (3–4) (1992), 641--655.
	 
	  \bibitem{BGO} L. Boccardo, T.Gallou\"et and L. Orsina, Existence and uniqueness of entropy solutions for nonlinear elliptic
	  equations with measure data, Ann. Inst. H. Poincar\'e Anal. Non Lin\'eaire 13 (1996), 539--551.
	\bibitem{BDL} T. A. Bui, X. T. Duong and X. T. Le, Regularity estimates for higher order elliptic systems on Reifenberg flat domains, to appear in {\it Journal of Differential Equations}. 
	  
	\bibitem{BW} S-S. Byun and L. Wang, Elliptic equations with BMO coefficients in Reifenberg domains, Comm. Pure Appl.
	Math. 57 (10) (2004), 1283--1310.
	
	\bibitem{BOR} S-S. Byun, J. Ok and S. Ryu, Global gradient estimates for elliptic equations of $p(x)$-Laplacian type with BMO nonlinearity, to appear in J. Reine Angew. Math. 
	
	\bibitem{BO} S-S. Byun and J. Ok, On $W^{1,q(\cdot)}$-estimates for elliptic equations of $p(x)$-Laplacian type. J. Math. Pures Appl. (9) 106 (2016), no. 3, 512--545.
	
		
	\bibitem{CP} L. A. Caffarelli and I. Peral, On $W^{1,p}$ estimates for elliptic equations in divergence form, Commun. Pure Appl. Math. 51 (1998), 1--21.
	
	\bibitem{CCF} C. Capone, D. Cruz-Uribe and A. Fiorenza, The fractional maximal operator and 	fractional integrals on variable $L^p$ spaces, Rev. Mat. Iberoamericana 23 (2007), no. 3, 743--770.
	
	\bibitem{CLR} Y. Chen, S. Levine and M. Rao, Variable exponent, linear growth functionals in image restoration, SIAM J. Appl. Math. 66 (2006), 1383--1406.
	
	
	\bibitem{CF} D. Cruz-Uribe and A. Fiorenza, \emph{Variable Lebesgue Spaces: Foundations and Harmonic Analysis}.
	Applied and Numerical Harmonic Analysis, Birkh\"auser, Basel, 2013.
	
	
	\bibitem{DT} G. David and T. Toro, A generalization of Reifenberg’s theorem in $\mathbb{R}^3$, Geom. Funct. Anal. 18 (4) (2008), 1168--1235.
	
	
	\bibitem{DR} L. Diening and M.  Ru\v{z}i\v{c}ka, Calder\'on--Zygmund operators on generalized Lebesgue spaces $L^{p(\cdot)}$ and
	problems related to fluid dynamics, J. Reine Angew. Math. 563, 197–220 (2003).
	
	\bibitem{Dea} L. Diening, P. Harjulehto, P. H\"ast\"o, and M. Ru\v{z}i\v{c}ka. \emph{Lebesgue and Sobolev spaces with variable
		exponents}. Lecture Notes in Mathematics volume 2017. Springer, Heidelberg, 2011.
	
	
	
	
	\bibitem{Du} J. Duoandikoetxea, {\it Fourier Analysis}, Grad. Stud. math, 29, American Math. Soc., Providence, 2000.
	
	\bibitem{DM1} F. Duzaar and G. Mingione, Gradient estimates via non-linear potentials, Am. J. Math. 133 (2011), 1093--1149.
	
	\bibitem{DM2}  F. Duzaar and G. Mingione, Gradient estimates via linear and nonlinear potentials, J. Funct. Anal. 259 (2010), no. 11, 2961--2998.
	
	\bibitem{DM3}  F. Duzaar and G. Mingione, Gradient continuity estimates. Calc. Var. Partial Differential Equations 39 (2010), no. 3-4, 379--418.
	
	
	\bibitem{Giu} E. Giusti, Direct methods in the calculus of variations, World Scientific Publishing Co., Inc., River Edge, NJ, 2003.
	
	
	
	\bibitem{Hetal} P. Harjulehto, T. Kuusi, T. Lukkari, N. Marola and M. Parviainen, Harnack’s inequality for quasiminimizers
	with non-standard growth conditions, J. Math. Anal. Appl. 344 (2008), no. 1, 504--520.
	
	
	
	
	
	
	\bibitem{KM1} T. Kilpel\"ainen and J.  Mal\'y, Degenerate elliptic equations with measure data and nonlinear potentials, Ann. Scuola Norm. Sup. Pisa Cl. Sci. (IV) 19 (1992), 591--613.
	
	\bibitem{KM2} T. Kilpel\"ainen and J.  Mal\'y, The Wiener test and potential estimates for quasilinear elliptic equations, Acta Math. 172 (1994), 137--161.
	 
	 
	\bibitem{KMi1} T. Kuusi and G. Mingione, Linear potentials in nonlinear potential theory. Arch. Ration. Mech. Anal. 207 (2013), no. 1, 215--246.
	
	\bibitem{KMi2} T. Kuusi and G. Mingione,  Universal potential estimates. J. Funct. Anal. 262 (2012), no. 10, 4205--4269.
	
	
	
	\bibitem{L1} G. M. Lieberman, The natural generalization of the natural conditions of Ladyzenskaja and Ural’tzeva for elliptic equations, Comm. Partial Differential Equations 16 (1991), 311--361.
	
	\bibitem{L2} G. M. Lieberman, Boundary regularity for solutions of degenerate elliptic equations, Nonlinear Anal. 12 	(1988), 1203--1219.
	
	\bibitem{M1} G. Mingione, Gradient potential estimates, J. Eur. Math. Soc. (JEMS) 13 (2011), no. 2, 459--486.
	
	\bibitem{MP} T. Mengesha and N. C. Phuc, Global estimates for quasilinear elliptic equations on Reifenberg flat domains, Arch. Ration. Mech. Anal. 203 (2012), no. 1, 189--216.
	
	\bibitem{MP2} T. Mengesha and N. C. Phuc, Weighted and regularity estimates for nonlinear equations on Reifenberg flat domains, J. Differ. Equa. 250 (2011), 2485--2507.
	
	\bibitem{QHN} Q-H. Nguyen, Potential estimates and quasilinear parabolic equations with measure data. https://arxiv.org/abs/1405.2587.
	
	
	\bibitem{Ph} N. C. Phuc, Nonlinear Muckenhoupt-Wheeden type bounds on Reifenberg flat domains, with applications to quasilinear Riccati type equations, Advances in Mathematics 250 (2014), 387--419. 
	
	
	\bibitem{RR} K. Rajagopal and M. Ru\v{z}i\v{c}ka, Mathematical modelling of electro-rheological fluids, Contin. Mech. Thermodyn. 13 (2001), 59--78.
	
	\bibitem{R} E. Reifenberg, Solutions of the plateau problem for $m$-dimensional surfaces of varying topological type, Acta Math (1960), 1--92.
	
	\bibitem{Ru} M. Ru\v{z}i\v{c}ka, Electrorheological Fluids: Modeling and Mathematical Theory, in: Lecture Notes in Math., vol. 1748, Springer, Berlin, 2000.
	
	
	
	
	
	
	
	\bibitem{SU} M. Sanch\'on and J. M.  Urbano, Entropy solutions for the $p(x)$-Laplace equation, Trans. Amer. Math. Soc. 361 (2009), no. 12, 6387--6405. 
	
	\bibitem{Toro} T. Toro, Doubling and flatness: geometry of measures, Notices Amer. Math. Soc. 44 (1997), 1087--1094.
	
	
	 \bibitem{TW} N. S. Trudinger and J. X. Wang, On the weak continuity of elliptic operators and applications  to potential theory, Amer. J. Math. 124 (2002), 369--410.
	
	\bibitem{ZY} X. Zhang and Y. Fu, Solutions for nonlinear elliptic equations with variable growth and degenerate coercivity, Ann. Mat. Pura Appl. (4) 193 (2014), no. 1, 133--161. 
	
	\bibitem{ZZ} C. Zhang and S. Zhou, Global weighted estimates for quasilinear elliptic equations with non-standard growth, J. Funct. Anal. 267 (2014), no. 2, 605--642. 
	
	\bibitem{Z} V. V. Zhikov, Averaging of functionals of the calculus of variations and elasticity theory (Russian), Izv. Akad.	Nauk SSSR Ser. Mat. 50 (1986), no. 4, 675--710.
	
\end{thebibliography}
\end{document}